\pdfoutput=1
\documentclass[a4paper,11pt]{amsart}
\synctex=1
\usepackage[hmarginratio={1:1},vmarginratio={1:1},lmargin=80.4pt,tmargin=90.0pt]{geometry}
%%%%%%%%%%%%%%%%%%%%%%%%%%%%%%%%
%                              %
% Commands, definitions        %
% and styles                   %
%                              %
%%%%%%%%%%%%%%%%%%%%%%%%%%%%%%%%

\usepackage{ifdraft}
\ifdraft{\usepackage[draft]{showkeys}}{\usepackage[final]{showkeys}}
\usepackage{calligra}
\usepackage[T1]{fontenc}
\usepackage[ugly]{nicefrac}

\usepackage{mathtools}
\mathtoolsset{mathic}
\usepackage[final]{microtype}
\usepackage{multicol}
\usepackage{tabularx,tabu}
\usepackage{latexsym,exscale,enumitem,amsfonts,amssymb}
\usepackage[leqno]{amsmath}
\usepackage{amsthm,amsfonts,amssymb,amscd,textcomp,bbm}
\usepackage{stmaryrd}
\SetSymbolFont{stmry}{bold}{U}{stmry}{m}{n}
\usepackage[normalem]{ulem}
\usepackage{thmtools}
\usepackage{etex}
\usepackage{fancybox}
\usepackage[table]{xcolor}
\usepackage{mathrsfs}
\DeclareMathAlphabet{\mathscrbf}{OMS}{mdugm}{b}{n}
\DeclareFontEncoding{LS1}{}{}
\DeclareFontSubstitution{LS1}{stix}{m}{n}
\DeclareMathAlphabet{\mathpzc}{LS1}{stixscr}{m}{n}

\usepackage{caption} 
\captionsetup{belowskip=5pt,aboveskip=5pt}

\allowdisplaybreaks[4]

\makeatletter
\newcommand{\leqnomode}{\tagsleft@true\let\veqno\@@leqno}
\newcommand{\reqnomode}{\tagsleft@false\let\veqno\@@eqno}
\makeatother

% colors -------------------------------------------------------

\definecolor{mygray}{gray}{0.6}
\definecolor{mygraydark}{gray}{0.4}
\definecolor{mygraylight}{gray}{0.8}

%\definecolor{cherry}{RGB}{222,49,99}
%\definecolor{cherry}{RGB}{50,120,168}
%\definecolor{cherry}{RGB}{99,49,222}
%\definecolor{cherry}{RGB}{100,150,200}
%\definecolor{cherry}{RGB}{125,125,125}
\definecolor{cherry}{RGB}{135,135,135}
%\definecolor{infinity}{RGB}{50,120,168}
%\definecolor{infinity}{RGB}{175,175,175}
\definecolor{infinity}{RGB}{110,110,110}
\definecolor{cream}{RGB}{255,253,208}
\definecolor{corn}{RGB}{251,236,93}
\definecolor{citron}{RGB}{190,180,90}

\definecolor{spinach}{RGB}{46,139,87}
\definecolor{tomato}{RGB}{255,99,71}
\definecolor{pumpkin}{RGB}{224,180,80}

\definecolor{orchid}{RGB}{143,40,194}
\definecolor{lava}{RGB}{207,16,32}
\definecolor{mydarkblue}{RGB}{10,10,150}

\definecolor{posbullet}{RGB}{170,10,10}
\definecolor{negbullet}{RGB}{10,10,170}

\definecolor{myorange}{RGB}{225,127,0}
\definecolor{mygreen}{RGB}{0,225,0}
\definecolor{mypurple}{RGB}{128,0,128}
\definecolor{myred}{RGB}{255,0,0}
\definecolor{myblue}{RGB}{0,0,195}
\definecolor{myyellow}{RGB}{210,210,0}

\usepackage[all]{xy}
\SelectTips{cm}{}

\usepackage{tikz}
\usetikzlibrary{cd}
\usetikzlibrary{decorations}
\usetikzlibrary{decorations.markings}
\usetikzlibrary{decorations.pathreplacing}
\usetikzlibrary{decorations.pathmorphing}
\usetikzlibrary{arrows.meta,shapes,positioning,matrix,calc}
\usetikzlibrary{shapes.callouts}
\usetikzlibrary{patterns}
%%tikzset
\tikzset{anchorbase/.style={baseline={([yshift=-0.5ex]current bounding box.center)}},
  tinynodes/.style={font=\tiny,text height=0.5ex,text depth=0.1ex},
  smallnodes/.style={font=\scriptsize,text height=0.75ex,text depth=0.15ex},
  pole/.style={line width=3.0,color=cherry},
  usual/.style={line width=.9,color=black},
  cpole/.style={line width=3.0,color=cherry,densely dashed},
  ccpole/.style={line width=2.0,color=cherry,densely dashed},
  cusual/.style={line width=.9,densely dashed},
  crossline/.style={preaction={draw=white,line width=4.75pt,-},preaction={draw=black,line width=.9pt,-}},
  crosspole/.style={preaction={draw=white,line width=6.0pt,-},preaction={draw=cherry,line width=3.0pt,-}},
  ssoergel/.style={line width=1.0,color=black},
  polebs/.style={line width=2.0,color=cherry},
  usualbs/.style={line width=.9,color=black},
  crosspolebs/.style={preaction={draw=white,line width=5.0pt,-},preaction={draw=cherry,line width=2.0pt,-}},
  moviestyle/.style={line width=.75,color=black},
  movieframe/.style={line width=.75,color=black,pattern=horizontal lines,pattern color=black},
  specialsymbol/.style={draw=none,every to/.append style={edge node={node [sloped, allow upside down, auto=false]{$#1$}}}},
}
%%stlye
\tikzstyle directed=[postaction={decorate,decoration={markings,
    mark=at position #1 with {\arrow[line width=0.225mm, black]{>}}}}]
\tikzstyle rdirected=[postaction={decorate,decoration={markings,
    mark=at position #1 with {\arrow[line width=0.225mm, black]{<}}}}]
\tikzstyle sdirected=[postaction={decorate,decoration={markings,
    mark=at position #1 with {\arrow[line width=0.225mm, black]{>}}}}]
\tikzstyle rsdirected=[postaction={decorate,decoration={markings,
    mark=at position #1 with {\arrow[line width=0.225mm, black]{<}}}}]
    
% general stuff -------------------------------------------------------

\allowdisplaybreaks

\makeatletter
\newcommand{\setword}[2]{%
  \phantomsection
  #1\def\@currentlabel{\unexpanded{#1}}\label{#2}%
}
\makeatother

\newcommand{\tp}{\text{+}}
\newcommand{\tm}{\text{-}}

\newcommand{\ie}{\textsl{i.e. }}
\newcommand{\eg}{\textsl{e.g. }}
\newcommand{\cf}{\textsl{cf. }}
\newcommand{\etc}{\textsl{etc.}}

\newcommand{\ver}{\textsl{verbatim }}

\newcommand{\loccit}{\textsl{loc. cit. }}

% rings, fields etc.

\newcommand{\C}{\mathbb{C}}
\newcommand{\Z}{\mathbb{Z}}

\newcommand{\K}{\mathbb{K}}
\newcommand{\N}[1][0]{\mathbb{N}_{#1}}

\newcommand{\slm}[1][m]{\mathfrak{sl}_{#1}}

% general conventions

\newcommand{\placeholder}{{-}}

\newcommand{\acts}{\cdot}

\newcommand{\topstuff}[1]{\mathpzc{#1}}
\newcommand{\elstuff}[1]{\mathtt{#1}}

\newcommand{\setstuff}[1]{\mathrm{#1}}
\newcommand{\catstuff}[1]{\mathcal{#1}}
\newcommand{\twocatstuff}[1]{\mathscrbf{#1}}

\newcommand{\obstuff}[1]{\mathtt{#1}}
\newcommand{\morstuff}[1]{\mathrm{#1}}

\newcommand{\Hom}{\mathrm{Hom}}
\newcommand{\HOM}{\mathrm{HOM}}
\newcommand{\Ext}{\mathrm{Ext}}
\newcommand{\EXT}{\mathrm{EXT}}
\newcommand{\iHOM}{\mathcal{HOM}^{\bullet}}

\newcommand{\twoEnd}{\mathbf{End}}

\newcommand{\ouriso}[1][\cong]{#1}

% types etc.

\newcommand{\typeA}[1][]{\mathsf{A}_{#1}}
\newcommand{\typeB}[1][]{\mathsf{B}_{#1}}
\newcommand{\typeC}[1][]{\mathsf{C}_{#1}}
\newcommand{\atypeA}[1][]{\tilde{\mathsf{A}}_{#1}}
\newcommand{\etypeA}[1][]{\hat{\mathsf{A}}_{#1}}
\newcommand{\atypeC}[1][]{\tilde{\mathsf{C}}_{#1}}

% links braids etc.

\newcommand{\hbody}[1][g]{\topstuff{H}_{#1}}
\newcommand{\gsurface}[1][g]{\Sigma_{#1}}
\newcommand{\thesphere}[1][3]{\topstuff{S}^{#1}}
\newcommand{\thedisk}[1][2]{\topstuff{D}^{#1}}

\newcommand{\braid}[1][b]{\topstuff{#1}}
\newcommand{\onebraid}{\raisebox{.025cm}{$\uparrow$}}
\newcommand{\bclosure}[1][\braid]{\overline{#1}}
\newcommand{\link}[1][l]{\topstuff{#1}}

\newcommand{\braidg}[1][]{\topstuff{B}\mathrm{r}(#1)}
\newcommand{\braidd}[1][]{\setstuff{B}\mathrm{r}(#1)}
\newcommand{\braidq}[1][]{\topstuff{B}\mathrm{r}(#1)^{\mathrm{Ma}}}

\newcommand{\tgen}[1][k]{\topstuff{t}_{#1}}
\newcommand{\bgen}[1][i]{\topstuff{b}_{#1}}

% Coxeter stuff

\newcommand{\coxdia}{\Gamma}

\newcommand{\artintits}[1][\coxdia]{\setstuff{AT}(#1)}
\newcommand{\coxgroup}[1][\coxdia]{\setstuff{W}(#1)}

\newcommand{\atgen}[1][i]{\beta_{#1}}

\newcommand{\coxgen}[1][i]{\sigma_{#1}}

\newcommand{\symgroup}[1][\coxdia]{\setstuff{S}(#1)}
\newcommand{\psymgroup}[2]{\setstuff{S}_{#2}(#1)}

% Soergel

\newcommand{\qpar}{\mathbf{q}}
\newcommand{\tpar}{\mathbf{t}}
\newcommand{\apar}{\mathbf{a}}
\newcommand{\hpar}{\mathbf{h}}
\newcommand{\xpar}{\mathbf{x}}

\newcommand{\qdeg}[1][\qpar]{{#1}\mathrm{deg}}
\newcommand{\qdim}[1][\qpar]{{#1}\mathrm{dim}_{\K}}

\newcommand{\sSbim}[1][\coxdia]{\mathbf{SS}^{\qpar}(#1)}

\newcommand{\hcat}[1]{\catstuff{K}^{b}\big({#1}\big)}

\newcommand{\dcat}[1]{\catstuff{D}^{b}\big({#1}\big)}
\newcommand{\ddcat}[1]{\catstuff{D}^{b}({#1})}
\newcommand{\dtimes}{\otimes^{\mathsf{L}}}

\newcommand{\rring}[1][]{\setstuff{R}^{#1}}
\newcommand{\rooty}[1][i]{\alpha_{#1}}

\newcommand{\dotup}[2]{\Delta_{#1}^{#2}}

\newcommand{\dotdown}[2]{\mu_{#1}^{#2}}

\newcommand{\stuple}[1][I]{\obstuff{#1}}
\newcommand{\num}[1]{\#{#1}}

\newcommand{\psplit}[2]{{}_{#2}\morstuff{S}{}_{#1}}
\newcommand{\pmerge}[2]{{}_{#2}\morstuff{M}{}_{#1}}

\newcommand{\ualph}[1][1]{\mathbb{X}_{#1}}
\newcommand{\yalph}[1][1]{\mathbb{Y}_{#1}}
\newcommand{\efunc}{\elstuff{e}}

\newcommand{\frobel}{\elstuff{a}}

% Braid complexes

\newcommand{\mtuple}{\obstuff{M}}

\newcommand{\khbracket}[2]{\left\llbracket{#1}\right\rrbracket_{#2}}

% Hochschild

\newcommand{\ering}[1]{{#1}\otimes{#1}^{\mathrm{op}}}
\newcommand{\eering}[1]{{#1}\otimes{#1}}

\newcommand{\ssbim}[1][\stuple]{\mathsf{SS}^{\qpar}(#1)}

\newcommand{\cHH}[1][\bullet]{\catstuff{HH}^{#1}}
\newcommand{\cHHH}[2]{\catstuff{HHH}^{\bullet}_{#2}\left(#1\right)}

\newcommand{\vecq}[1][\K]{{#1}\mathsf{Vec}^{\qpar}}

\newcommand{\vecaq}[1][\K]{{#1}\mathsf{Vec}^{\apar\qpar}}
\newcommand{\vecatq}[1][\K]{{#1}\mathsf{Vec}^{\apar\tpar\qpar}}

\newcommand{\extalg}[1][\bullet]{{\textstyle\bigwedge^{#1}}}

\newcommand{\bimodq}[1]{{#1}\mathsf{Bim}^{\qpar}}

% partial Hochschild

\newcommand{\ifunctor}{\catstuff{I}}
\newcommand{\pfunctor}{\catstuff{T}}

% Appendix

% THEOREMS -------------------------------------------------------
\newtheorem{theoremm}{Theorem}[section]

\declaretheorem[style=plain,name=Theorem,numberlike=theoremm]{theorem}
\declaretheorem[style=plain,name=Lemma,numberlike=theoremm]{lemma}
\declaretheorem[style=plain,name=Proposition,numberlike=theoremm]{proposition}

\declaretheorem[style=plain,name=Theorem,qed=$\square$,numberlike=theoremm]{theoremqed}

\declaretheorem[style=plain,name=Corollary,qed=$\square$,numberlike=theoremm]{corollary}

\declaretheorem[style=definition,name=Definition,numberlike=theorem]{definition}

\declaretheorem[style=remark,name=Example,numberlike=theorem]{example}
\declaretheorem[style=remark,name=Remark,numberlike=theorem]{remark}
\declaretheorem[style=remark,name=Convention,numberlike=theorem]{convention}

\allowdisplaybreaks

% maintenance -------------------------------------------------------

% hyperef -------------------------------------------------------

\setcounter{tocdepth}{1}
\setcounter{secnumdepth}{3}
\numberwithin{equation}{section}
\usepackage[hypertexnames=false]{hyperref}
\usepackage{cleveref,bookmark}
\hypersetup{
    pdftoolbar=true,        
    pdfmenubar=true,        
    pdffitwindow=false,     
    pdfstartview={FitH},    
    pdftitle={HOMFLYPT homologies for links in handlebodies},    
    pdfauthor={David E.V. Rose and Daniel Tubbenhauer},     
    pdfsubject={},   
    pdfcreator={David E.V. Rose and Daniel Tubbenhauer},   
    pdfproducer={David E.V. Rose and Daniel Tubbenhauer}, 
    pdfkeywords={}, 
    pdfnewwindow=true,      
    colorlinks=true,       
    linkcolor=mydarkblue,          
    citecolor=teal,        
    filecolor=magenta,      
    urlcolor=orchid,          
    linkbordercolor=lava,
    citebordercolor=teal,
    urlbordercolor=orchid,  
    linktocpage=true
}
%

%
% autref -------------------------------------------------------
%
\let\fullref\autoref
%  To work correctly the environment name must have a corrresponding 
%  \XXXautorefname defined.  The following command does the job:
%
\def\makeautorefname#1#2{\expandafter\def\csname#1autorefname\endcsname{#2}}
%
%  Some standard autorefnames.  If the environment name for an autoref 
%  you need is not listed below, add a similar line to your TeX file:
%  
\makeautorefname{equation}{equation}%
\makeautorefname{gather}{equation}%
\makeautorefname{align}{equation}%
\makeautorefname{footnote}{footnote}%
\makeautorefname{item}{item}%
\makeautorefname{figure}{Figure}%
\makeautorefname{table}{Table}%
\makeautorefname{part}{Part}%
\makeautorefname{appendix}{Appendix}%
\makeautorefname{chapter}{Chapter}%
\makeautorefname{section}{Section}%
\makeautorefname{subsection}{Section}%
\makeautorefname{subsubsection}{Section}%
\makeautorefname{paragraph}{Paragraph}%
\makeautorefname{subparagraph}{Paragraph}%
\makeautorefname{theorem}{Theorem}%
\makeautorefname{theo}{Theorem}%
\makeautorefname{thm}{Theorem}%
\makeautorefname{addendum}{Addendum}%
\makeautorefname{addend}{Addendum}%
\makeautorefname{add}{Addendum}%
\makeautorefname{maintheorem}{Main theorem}%
\makeautorefname{mainthm}{Main theorem}%
\makeautorefname{corollary}{Corollary}%
\makeautorefname{corol}{Corollary}%
\makeautorefname{coro}{Corollary}%
\makeautorefname{cor}{Corollary}%
\makeautorefname{lemma}{Lemma}%
\makeautorefname{lemm}{Lemma}%
\makeautorefname{lem}{Lemma}%
\makeautorefname{sublemma}{Sublemma}%
\makeautorefname{sublem}{Sublemma}%
\makeautorefname{subl}{Sublemma}%
\makeautorefname{proposition}{Proposition}%
\makeautorefname{proposit}{Proposition}%
\makeautorefname{propos}{Proposition}%
\makeautorefname{propo}{Proposition}%
\makeautorefname{prop}{Proposition}%
\makeautorefname{property}{Property}
\makeautorefname{proper}{Property}
\makeautorefname{scholium}{Scholium}%
\makeautorefname{step}{Step}%
\makeautorefname{conjecture}{Conjecture}%
\makeautorefname{conject}{Conjecture}%
\makeautorefname{conj}{Conjecture}%
\makeautorefname{question}{Question}
\makeautorefname{questn}{Question}
\makeautorefname{quest}{Question}
\makeautorefname{ques}{Question}
\makeautorefname{qn}{Question}
\makeautorefname{definition}{Definition}%
\makeautorefname{defin}{Definition}%
\makeautorefname{defi}{Definition}%
\makeautorefname{def}{Definition}%
\makeautorefname{dfn}{Definition}%
\makeautorefname{notation}{Notation}
\makeautorefname{nota}{Notation}
\makeautorefname{notn}{Notation}
\makeautorefname{remark}{Remark}%
\makeautorefname{rema}{Remark}%
\makeautorefname{rem}{Remark}%
\makeautorefname{rmk}{Remark}%
\makeautorefname{rk}{Remark}%
\makeautorefname{remarks}{Remarks}%
\makeautorefname{rems}{Remarks}%
\makeautorefname{rmks}{Remarks}%
\makeautorefname{rks}{Remarks}%
\makeautorefname{example}{Example}%
\makeautorefname{examp}{Example}%
\makeautorefname{exmp}{Example}%
\makeautorefname{exam}{Example}%
\makeautorefname{exa}{Example}%
\makeautorefname{algorithm}{Algorithm}%
\makeautorefname{algo}{Algorithm}%
\makeautorefname{alg}{Algorithm}%
\makeautorefname{axiom}{Axiom}%
\makeautorefname{axi}{Axiom}%
\makeautorefname{ax}{Axiom}%
\makeautorefname{case}{Case}%
\makeautorefname{claim}{Claim}%
\makeautorefname{clm}{Claim}%
\makeautorefname{assumption}{Assumption}%
\makeautorefname{assumpt}{Assumption}%
\makeautorefname{conclusion}{Conclusion}%
\makeautorefname{concl}{Conclusion}%
\makeautorefname{conc}{Conclusion}%
\makeautorefname{condition}{Condition}%
\makeautorefname{condit}{Condition}%
\makeautorefname{cond}{Condition}%
\makeautorefname{construction}{Construction}%
\makeautorefname{construct}{Construction}%
\makeautorefname{const}{Construction}%
\makeautorefname{cons}{Construction}%
\makeautorefname{criterion}{Criterion}%
\makeautorefname{criter}{Criterion}%
\makeautorefname{crit}{Criterion}%
\makeautorefname{exercise}{Exercise}%
\makeautorefname{exer}{Exercise}%
\makeautorefname{exe}{Exercise}%
\makeautorefname{problem}{Problem}%
\makeautorefname{problm}{Problem}%
\makeautorefname{probm}{Problem}%
\makeautorefname{prob}{Problem}%
\makeautorefname{solution}{Solution}%
\makeautorefname{soln}{Solution}%
\makeautorefname{sol}{Solution}%
\makeautorefname{summary}{Summary}%
\makeautorefname{summ}{Summary}%
\makeautorefname{sum}{Summary}%
\makeautorefname{operation}{Operation}%
\makeautorefname{oper}{Operation}%
\makeautorefname{observation}{Observation}%
\makeautorefname{observn}{Observation}%
\makeautorefname{obser}{Observation}%
\makeautorefname{obs}{Observation}%
\makeautorefname{ob}{Observation}%
\makeautorefname{convention}{Convention}%
\makeautorefname{convent}{Convention}%
\makeautorefname{conv}{Convention}%
\makeautorefname{cvn}{Convention}%
\makeautorefname{warning}{Warning}%
\makeautorefname{warn}{Warning}%
\makeautorefname{note}{Note}%
\makeautorefname{fact}{Fact}%

\begin{document}
\vbadness=10001
\hbadness=10001
\overfullrule=1mm
\title[HOMFLYPT homology for links in handlebodies]{HOMFLYPT homology for links in handlebodies via type $\typeA$ Soergel bimodules}
\author[David E.V. Rose and Daniel Tubbenhauer]{David E.V. Rose and Daniel Tubbenhauer}

\address{D.E.V.R.: Department of Mathematics, North Carolina at Chapel Hill, Phillips Hall CB3250, UNC-CH, Chapel Hill, NC 27599-3250, United states, \href{http://davidev.web.unc.edu/}{davidev.web.unc.edu}}
\email{davidrose@unc.edu}

\address{D.T.: Institut f{\"u}r Mathematik, Universit{\"a}t Z{\"u}rich, Winterthurerstrasse 190, Campus Irchel, Office Y27J32, CH-8057 Z{\"u}rich, Switzerland, \href{www.dtubbenhauer.com}{www.dtubbenhauer.com}}
\email{daniel.tubbenhauer@math.uzh.ch}

\begin{abstract}
We define a triply-graded invariant of links in a genus $g$ handlebody, 
generalizing the colored HOMFLYPT (co)homology of links in the $3$-ball. 
Our main tools are the description of these links in terms of a 
subgroup of the classical braid group, 
and a family of categorical actions built from complexes of 
(singular) Soergel bimodules.
\end{abstract}

\maketitle

\tableofcontents

\renewcommand{\theequation}{\thesection-\arabic{equation}}
%%%%%%%%%%
\addtocontents{toc}{\protect\setcounter{tocdepth}{1}}
%%%%%%%%%%

%%%%%%%%%%%%%%%%%%%%%%%%%%%%%%%%%%%%%%%
\section{Introduction}\label{section:introduction}
%%%%%%%%%%%%%%%%%%%%%%%%%%%%%%%%%%%%%%%

The HOMFLYPT polynomial is a classical invariant of links $\link\subset\thesphere$ 
in the $3$-sphere $\thesphere$
with interesting and deep connections to representation theory.
As pioneered by Jones \cite{Jo-hecke-homfly}, 
the HOMFLYPT polynomial may be defined using representations of the 
classical $n$ strand braid group $\braidg[n]$ on the type $\typeA$ Hecke algebra.
Indeed, we may use Alexander's theorem to present a link as a braid closure, 
and the HOMFLYPT polynomial then results by mapping the braid to 
the Hecke algebra and 
applying the so-called Jones--Ocneanu trace.
(For the duration of the introduction, if not explicitly stated otherwise, 
``Hecke algebra'' and related notions are always of type $\typeA$.)

This approach to the HOMFLYPT polynomial was categorified in work of Khovanov \cite{Kh-homfly-soergel}. 
In this work, Khovanov shows that the triply-graded Khovanov--Rozansky homology 
$\cHHH{\link}{}$ of $\link\subset\thesphere$, 
originally defined in \cite{KhRo-link-homologies-2}, 
admits a construction paralleling Jones's
approach at the categorical level. 
This approach proceeds by replacing the Hecke algebra by the 
corresponding Hecke category, \ie the category of Soergel bimodules.
The latter admits a categorical action of $\braidg[n]$ via so-called Rouquier complexes \cite{Ro-cat-braid-group}, 
and the link homology results by taking Hochschild (co)homology, 
which provides a categorical analogue
of the Jones--Ocneanu trace.

In addition to their triply-graded invariant, 
for each $m\geq 2$
Khovanov and Rozansky define a doubly-graded homology theory for links 
$\link\subset\thesphere$ \cite{KhRo-link-homologies} 
that categorifies the $\slm$ specialization of the HOMFLYPT polynomial.
In the $m=2$ case, which coincides with Khovanov's categorification of the Jones polynomial 
\cite{Kh-cat-jones}, 
Asaeda--Przytycki--Sikora have extended this link homology to links in $3$-manifolds 
$\topstuff{M}\neq\thesphere$ \cite{AsPrSi-link-homology-annulus}, namely to links in thickened surfaces. 
Of particular interest is the case of the thickened annulus, 
where the so-called annular Khovanov homology has deep connections to both 
Floer theory  
and representation theory, see \eg \cite{Rob}, \cite{GW} and \cite{GrLiWe-annular-schur-weyl}.
In \cite{QuRo-sutured-annular-homology}, 
an analogue of doubly-graded Khovanov--Rozansky homology was constructed for annular links, 
extending annular Khovanov homology, and its connection to representation theory, to general $m$.
Unfortunately, the above approaches to link homology in $3$-manifolds $\topstuff{M}\neq\thesphere$
do not extend to the triply-graded setting.

In this paper, we remedy this by constructing generalizations of the triply-graded link homology for links 
in $3$-manifolds distinct from the $3$-sphere, namely in genus $g$ handlebodies.
(For $g=1$, this is the case of links in the thickened annulus.)
Our key insights are: (1) to consider various generalizations of the classical braid group that 
are related to links in handlebodies, 
and (2) that certain structures in categorical representation theory model the topology of the handlebody.
We now detail our approach.

\subsection{An overview of our construction}\label{subsection:intro-b}

Throughout, we let $g,n\in\N$.
Recall that Khovanov's construction of $\cHHH{\link}{}$ for $\link\subset\thesphere$ requires the following.
\begin{enumerate}[label=$\bullet$]

\setlength\itemsep{.15cm}

\item Alexander's Theorem, which states that, up to isotopy, any link 
$\link\subset\thesphere$ 
can be presented as the closure of a braid $\braid$ in the classical 
$n$-strand braid group $\braidg[n]$.

\item Markov's Theorem, which gives necessary and sufficient conditions 
for two distinct braids to have isotopic closures.

\item A categorical action of $\braidg[n]$ on the Hecke category via Rouquier complexes, 
which allows for the assignment of a chain complex of Soergel bimodules to each $\braid\in\braidg[n]$.

\item Hochschild (co)homology, which produces a 
Markov invariant triply-graded vector space from this complex of 
Soergel bimodules.
\end{enumerate}

In \cite{HaOlLa-handlebodies} (see also \cite{La-typeB-braids}), it is shown that analogues of Alexander's 
and Markov's Theorems hold for links in the genus $g$ handlebody $\hbody$.
Playing the role of the classical braid group is the $n$-strand braid group $\braidg[g,n]$ 
of the $g$-times punctured disk $\thedisk_{g}$. 
The classical story here is the $g=0$ case, where $\braidg[n]=\braidg[0,n]$.

As we more fully detail in \fullref{subsec:links-alexander}, 
braids in $\braidg[g,n]$ can be pictured as classical braids in the presence of 
non-intersecting ``core strands''.
We then obtain a link in $\hbody$ by allowing the tops and bottoms of the core strands to 
meet at $\infty$, and by taking a closure of the ``usual strands''.
The latter then form a link in the complement of the (glued) core strands, 
which is a handlebody $\hbody$:
\begin{gather}\label{eq:braidexample}
\braid=
\begin{tikzpicture}[anchorbase,scale=.5,tinynodes]
	\draw[pole,crosspole] (0,0) to[out=90, in=270] (0,1.5);
	\draw[pole,crosspole] (1,0) to[out=90, in=270] (1,1.5);
	\draw[usual,crossline] (3,0) to[out=90, in=0] (2.75,.5) 
	to[out=180, in=0] (1.75,.5) to[out=180, in=270] (1.5,.75);
	\draw[pole,crosspole] (2,0) to[out=90, in=270] (2,1.5);
	\draw[usual,crossline] (1.5,.75) to[out=90, in=180] (1.75,1) 
	to[out=0, in=180] (2.75,1) to[out=0, in=270] (3,1.5);
	\draw[pole,crosspole] (0,1.5) to[out=90, in=270] (0,3);
	\draw[usual,crossline] (3,1.5) to[out=90, in=0] (2.75,2) 
	to[out=180, in=0] (.75,2) to[out=180, in=270] (.5,2.25);
	\draw[pole,crosspole] (1,1.5) to[out=90, in=270] (1,3);
	\draw[pole,crosspole] (2,1.5) to[out=90, in=270] (2,3);
	\draw[usual,crossline,directed=.99] (.5,2.25) 
	to[out=90, in=180] (.75,2.5) to[out=0, in=180] (2.75,2.5) to[out=0, in=270] (3,3);
	\draw[infinity,->] (1,-.6) to (0,-.35) to (0,-.1);
	\draw[infinity,->] (1,-.6) node[below,infinity]{core strands} to (1,-.35) to (1,-.1);
	\draw[infinity,->] (1,-.6) to (2,-.35) to (2,-.1);
	\draw[black,->] (1,3.6) node[above]{usual strand} to (1,3.35) to (3,3.35) to (3,3.1);
\end{tikzpicture}
\subset
\thedisk_{g}\times[0,1]
\xrightarrow{\text{closure}}
\begin{tikzpicture}[anchorbase,scale=.5,tinynodes]
	\draw[pole,crosspole] (0,0) to[out=90, in=270] (0,1.5);
	\draw[pole,crosspole] (1,0) to[out=90, in=270] (1,1.5);
	\draw[usual,crossline] (3,0) to[out=90, in=0] (2.75,.5) to[out=180, in=0] (1.75,.5) to[out=180, in=270] (1.5,.75);
	\draw[pole,crosspole] (2,0) to[out=90, in=270] (2,1.5);
	\draw[usual,crossline] (1.5,.75) to[out=90, in=180] (1.75,1) to[out=0, in=180] (2.75,1) to[out=0, in=270] (3,1.5);
	\draw[pole,crosspole] (0,1.5) to[out=90, in=270] (0,3);
	\draw[usual,crossline] (3,1.5) to[out=90, in=0] (2.75,2) to[out=180, in=0] (.75,2) to[out=180, in=270] (.5,2.25);
	\draw[pole,crosspole] (1,1.5) to[out=90, in=270] (1,3);
	\draw[pole,crosspole] (2,1.5) to[out=90, in=270] (2,3);
	\draw[usual,crossline,directed=.99] (.5,2.25) to[out=90, in=180] (.75,2.5) to[out=0, in=180] (2.75,2.5) to[out=0, in=270] (3,3);
	\draw[cpole] (0,0) to[out=270, in=180] (1,-.75);
	\draw[cpole] (1,0) to[out=270, in=90] (1,-.75);
	\draw[cpole] (2,0) to[out=270, in=0] (1,-.75);
	\draw[cpole] (0,3) to[out=90, in=180] (1,3.75);
	\draw[cpole] (1,3) to[out=90, in=270] (1,3.75);
	\draw[cpole] (2,3) to[out=90, in=0] (1,3.75);
	\draw node[pole] at (1,3.5) {{\color{infinity}\LARGE$\bullet$}};
	\draw node[pole, above] at (1,3.7) {\color{infinity}$\infty$};
	\draw node[pole] at (1,-1) {{\color{infinity}\LARGE$\bullet$}};
	\draw node[pole, below] at (1,-.8) {\color{infinity}$\infty$};
	\draw[cusual] (3,0) to[out=270, in=180] (3.5,-.5) 
	to[out=0, in=270] (4,0) to (4,3) to[out=90, in=0] (3.5,3.5) to[out=180, in=90] (3,3);
\end{tikzpicture}
\end{gather}
The analogue of the Alexander Theorem here shows that, up to isotopy, every link in $\hbody$ arises in this way, 
and the corresponding Markov Theorem characterizes when distinct braids give rise to isotopic closures.

Issues arise, however, when attempting to carry out the last two steps in the construction of 
triply-graded link homology in this setting. 
Indeed, for general $g$, the groups $\braidg[g,n]$ are not 
known to be 
Artin--Tits groups 
(see \fullref{subsection:intro-digression} for further discussion), 
so, to our knowledge, 
there are no associated Soergel bimodules and/or Rouquier complexes.
Further, the Markov Theorem has a weaker notion of conjugation than in the classical case, 
\eg we have
\begin{gather}\label{eq:not-equal}
\begin{tikzpicture}[anchorbase,scale=.5,tinynodes]
	\draw[pole,crosspole] (1,0) to[out=90, in=270] (1,1.5);
	\draw[usual,crossline] (3,0) to[out=90, in=0] (2.75,.5) 
	to[out=180, in=0] (1.75,.5) to[out=180, in=270] (1.5,.75);
	\draw[pole,crosspole] (2,0) to[out=90, in=270] (2,1.5);
	\draw[usual,crossline] (1.5,.75) to[out=90, in=180] (1.75,1) 
	to[out=0, in=180] (2.75,1) to[out=0, in=270] (3,1.5);
	\draw[pole,crosspole] (2,1.5) to[out=90, in=270] (2,3);
	\draw[usual,crossline] (3,1.5) to[out=90, in=0] (2.75,2) 
	to[out=180, in=0] (.75,2) to[out=180, in=270] (.5,2.25);
	\draw[pole,crosspole] (1,1.5) to[out=90, in=270] (1,3);
	\draw[usual,crossline,directed=1] (.5,2.25) to[out=90, in=180] (.75,2.5) 
	to[out=0, in=180] (2.75,2.5) to[out=0, in=270] (3,3);
	\draw[cpole] (1,0) to[out=270, in=180] (1.5,-.5);
	\draw[cpole] (2,0) to[out=270, in=0] (1.5,-.5);
	\draw[cpole] (1,3) to[out=90, in=180] (1.5,3.5);
	\draw[cpole] (2,3) to[out=90, in=0] (1.5,3.5);
	\draw node[pole] at (1.5,-.7) {{\color{infinity}\LARGE$\bullet$}};
	\draw node[pole, below] at (1.5,-.5) {\color{infinity}$\infty$};
	\draw node[pole] at (1.5,3.3) {{\color{infinity}\LARGE$\bullet$}};
	\draw node[pole, above] at (1.5,3.5) {\color{infinity}$\infty$};
	\draw[thick,mygray,fill=citron,fill opacity=.1] (0,3) 
	to (3.5,3) to (3.5,0) to (0,0) to (0,3);
	\draw[cusual] (3,0) to[out=270, in=180] (3.5,-.5) 
	to[out=0, in=270] (4,0) to (4,3) to[out=90, in=0] (3.5,3.5) to[out=180, in=90] (3,3);
	\node at (.5,1.5) {$\braid$};
\end{tikzpicture}
\xleftrightarrow{\text{not isotopic}}
\begin{tikzpicture}[anchorbase,scale=.5,tinynodes]
	\draw[pole,crosspole] (2,0) to[out=90, in=270] (2,1.5);
	\draw[usual,crossline] (3,0) to[out=90, in=0] (2.75,.5) 
	to[out=180, in=0] (.75,.5) to[out=180, in=270] (.5,.75);
	\draw[pole,crosspole] (1,0) to[out=90, in=270] (1,1.5);
	\draw[usual,crossline] (.5,.75) to[out=90, in=180] (.75,1) 
	to[out=0, in=180] (2.75,1) to[out=0, in=270] (3,1.5);
	\draw[pole,crosspole] (1,1.5) to[out=90, in=270] (1,3);
	\draw[usual,crossline] (3,1.5) to[out=90, in=0] (2.75,2) 
	to[out=180, in=0] (1.75,2) to[out=180, in=270] (1.5,2.25);
	\draw[pole,crosspole] (2,1.5) to[out=90, in=270] (2,3);
	\draw[usual,crossline,directed=1] (1.5,2.25) 
	to[out=90, in=180] (1.75,2.5) to[out=0, in=180] (2.75,2.5) to[out=0, in=270] (3,3);
	\draw[cpole] (1,0) to[out=270, in=180] (1.5,-.5);
	\draw[cpole] (2,0) to[out=270, in=0] (1.5,-.5);
	\draw[cpole] (1,3) to[out=90, in=180] (1.5,3.5);
	\draw[cpole] (2,3) to[out=90, in=0] (1.5,3.5);
	\draw node[pole] at (1.5,-.7) {{\color{infinity}\LARGE$\bullet$}};
	\draw node[pole, below] at (1.5,-.5) {\color{infinity}$\infty$};
	\draw node[pole] at (1.5,3.3) {{\color{infinity}\LARGE$\bullet$}};
	\draw node[pole, above] at (1.5,3.5) {\color{infinity}$\infty$};
	\draw[thick,mygray,fill=citron,fill opacity=.1] (0,3) 
	to (3.5,3) to (3.5,0) to (0,0) to (0,3);
	\draw[cusual] (3,0) to[out=270, in=180] (3.5,-.5) 
	to[out=0, in=270] (4,0) to (4,3) to[out=90, in=0] (3.5,3.5) to[out=180, in=90] (3,3);
	\node at (.5,1.5) {$\braid[c]$};
\end{tikzpicture}
\end{gather}
even though the indicated (boxed) braids $\braid,\braid[c]\in\braidg[2,1]$ are conjugate. 
Hence, even with a categorical representation of $\braidg[g,n]$ in hand, 
one cannot simply apply Hochschild cohomology to obtain an invariant of links 
that is sensitive to the topology of $\hbody$.

We simultaneously resolve both these problems as follows. 
We expand the point at infinity to a small segment, 
which we move close to the top of the core strands. 
As a result, we can view the closure of the ``usual strands'' 
as a link in the handlebody given by the complement 
of the graph determined by the core strands and the segment at infinity, \eg
\begin{gather}\label{eq:infinitysegment}
\begin{tikzpicture}[anchorbase,scale=.5,tinynodes]
	\draw[pole,crosspole] (2,0) to[out=90, in=270] (2,1.5);
	\draw[usual,crossline] (3,0) to[out=90, in=0] (2.75,.5) 
	to[out=180, in=0] (.75,.5) to[out=180, in=270] (.5,.75);
	\draw[pole,crosspole] (1,0) to[out=90, in=270] (1,1.5);
	\draw[usual,crossline] (.5,.75) to[out=90, in=180] (.75,1) 
	to[out=0, in=180] (2.75,1) to[out=0, in=270] (3,1.5);
	\draw[pole,crosspole] (1,1.5) to[out=90, in=270] (1,3);
	\draw[usual,crossline] (3,1.5) to[out=90, in=0] (2.75,2) 
	to[out=180, in=0] (1.75,2) to[out=180, in=270] (1.5,2.25);
	\draw[pole,crosspole] (2,1.5) to[out=90, in=270] (2,3);
	\draw[usual,crossline,directed=1] (1.5,2.25) 
	to[out=90, in=180] (1.75,2.5) to[out=0, in=180] (2.75,2.5) to[out=0, in=270] (3,3);
	\draw[cpole] (1,0) to[out=270, in=180] (1.5,-.5);
	\draw[cpole] (2,0) to[out=270, in=0] (1.5,-.5);
	\draw[cpole] (1,3) to[out=90, in=180] (1.5,3.5);
	\draw[cpole] (2,3) to[out=90, in=0] (1.5,3.5);
	\draw node[pole] at (1.5,-.7) {{\color{infinity}\LARGE$\bullet$}};
	\draw node[pole, below] at (1.5,-.5) {\color{infinity}$\infty$};
	\draw node[pole] at (1.5,3.3) {{\color{infinity}\LARGE$\bullet$}};
	\draw node[pole, above] at (1.5,3.5) {\color{infinity}$\infty$};
	\draw[cusual] (3,0) to[out=270, in=180] (3.5,-.5) 
	to[out=0, in=270] (4,0) to (4,3) to[out=90, in=0] (3.5,3.5) to[out=180, in=90] (3,3);
\end{tikzpicture}
\ouriso[\sim]
\begin{tikzpicture}[anchorbase,scale=.5,tinynodes]
	\draw[pole,crosspole] (2,0) to[out=90, in=270] (2,1.5);
	\draw[usual,crossline] (3,0) to[out=90, in=0] (2.75,.5) 
	to[out=180, in=0] (.75,.5) to[out=180, in=270] (.5,.75);
	\draw[pole,crosspole] (1,0) to[out=90, in=270] (1,1.5);
	\draw[usual,crossline] (.5,.75) to[out=90, in=180] (.75,1) 
	to[out=0, in=180] (2.75,1) to[out=0, in=270] (3,1.5);
	\draw[pole,crosspole] (1,1.5) to[out=90, in=270] (1,3);
	\draw[usual,crossline] (3,1.5) to[out=90, in=0] (2.75,2) 
	to[out=180, in=0] (1.75,2) to[out=180, in=270] (1.5,2.25);
	\draw[pole,crosspole] (2,1.5) to[out=90, in=270] (2,3);
	\draw[usual,crossline,directed=1] (1.5,2.25) 
	to[out=90, in=180] (1.75,2.5) to[out=0, in=180] (2.75,2.5) to[out=0, in=270] (3,3);
	\draw[cpole] (1,3) to[out=90, in=180] (1.5,3.5);
	\draw[cpole] (2,3) to[out=90, in=0] (1.5,3.5);
	\draw[cpole] (1.5,3.5) to (1.5,4);
	\draw[cpole] (1,4.5) to[out=270, in=180] (1.5,4);
	\draw[cpole] (2,4.5) to[out=270, in=0] (1.5,4);
	\draw[cpole] (2,4.5) to [out=90,in=180] (3.5,5.5) to [out=0,in=90] (5,4.5) to (5,0)
	to [out=270,in=0] (3.5,-1) to [out=180,in=270] (2,0);
	\draw[cpole] (1,4.5) to [out=90,in=180] (3.5,6.5) to [out=0,in=90] (6,4.5) to (6,0)
	to [out=270,in=0] (3.5,-2) to [out=180,in=270] (1,0);
	\draw[thick,mygray,fill=citron,fill opacity=.1] (0,4.5) 
	to (3.5,4.5) to (3.5,0) to (0,0) to (0,4.5);
	\draw[cusual] (3,0) to[out=270, in=180] (3.5,-.5) 
	to[out=0, in=270] (4,0) to (4,4.5) to[out=90, in=0] (3.5,5) to[out=180, in=90] (3,4.5) to (3,3);
\end{tikzpicture}
\end{gather}
In this modified presentation we are able to assign an invariant to the link $\link\subset\hbody$ 
using known structures in categorical representation theory.
Indeed, for any labeling of the core strands, 
the boxed diagram in \eqref{eq:infinitysegment} determines a complex of 
singular Soergel bimodules. 
The latter determine a ($2$-)category that contains the Hecke category \cite{Wi-sing-soergel}, 
and categorifies the Schur algebroid, a certain idempotent completion of the Hecke algebra. 
Further, the closure procedure now does not involve the point at infinity, 
and hence can be carried out algebraically as usual, using Hochschild cohomology. 
In this way, we obtain a triply-graded homology for links $\link\subset\hbody$.
We show that this indeed produces a well-defined invariant of 
handlebody links, and that it is sensitive to the topology of the handlebody, 
\eg it distinguishes the links in \eqref{eq:not-equal}.

\subsection{A digression on Artin--Tits groups}\label{subsection:intro-digression}

Our motivation for this project, from which we have now somewhat strayed, 
was to further our understanding of the connection between low-dimensional topology and Artin--Tits groups. 
Recall that a  Coxeter diagram $\coxdia=(\setstuff{V},\setstuff{E})$ consists of a 
simple, complete graph with finitely many vertices $\setstuff{V}$
whose edges $e=(i,j)\in\setstuff{E}$ carry a label $m_{ij}=m_{ji}\in\N[\geq 2]\cup\{\infty\}$. 
To any such diagram, we may associate the Artin--Tits group:
\begin{gather}\label{eq:artintits-braids}
\artintits
:=
\big\langle\atgen,\,i\in\setstuff{V}\mid
\underbrace{\dots\atgen\atgen[j]\atgen}_{m_{ij}\text{ factors}}
=
\underbrace{\dots\atgen[j]\atgen\atgen[j]}_{m_{ij}\text{ factors}}
\big\rangle.
\end{gather}
This group is an extension of the corresponding Coxeter group:
\begin{gather}\label{eq:coxeter}
\coxgroup
:=
\big\langle
\coxgen,\,i\in\setstuff{V}\mid\coxgen^{2}=1,\,
\underbrace{\dots\coxgen\coxgen[j]\coxgen}_{m_{ij}\text{ factors}}
=
\underbrace{\dots\coxgen[j]\coxgen\coxgen[j]}_{m_{ij}\text{ factors}}
\big\rangle.
\end{gather}
The jumping-off point is the classical observation that $\braidg[n]=\braidg[0,n]$ 
is isomorphic to the Artin--Tits braid group of type $\typeA$, 
while $\braidg[1,n]$ is isomorphic to the Artin--Tits group of type $\typeC=\typeB$ 
and extended affine type $\typeA$. 
More-surprising is the lesser-known fact that 
$\braidg[2,n]$ is isomorphic to the Artin--Tits group of affine type $\typeC$ 
\cite{Al-braids-abcd}. 
The following table summarizes these known connections, 
details of which can be found in 
\eg \cite[Section 4]{Al-braids-abcd} and \cite{Br-bourbaki-lectures}.
\begin{gather}\label{eq:table}
\renewcommand{\arraystretch}{1.25}
\begin{tabular}{c||c|c}
Genus & type $\typeA$ & type $\typeC$\\ 
\hline
\hline
$g=0$ &  $\braidg[n]\ouriso\artintits[{\typeA[n{-}1]}]$ & $\boldsymbol{?}$ \\ 
\hline 
$g=1$ &  $\braidg[1,n]\ouriso\Z\ltimes\artintits[{\atypeA[n{-}1]}]\ouriso\artintits[{\etypeA[n{-}1]}]$ 
&  $\braidg[1,n]\ouriso\artintits[{\typeC[n]}]$  \\
\hline 
$g=2$ & $\boldsymbol{?}$ &  $\braidg[2,n]\ouriso\artintits[{\atypeC[n]}]$ \\
\hline  
$g\geq 3$ & $\boldsymbol{?}$ & $\boldsymbol{?}$ \\
\end{tabular}
\end{gather}
Herein, $\typeA[n{-}1]$ denotes the type $\typeA$ Coxeter diagram with $n-1$ nodes, 
while $\atypeA[n{-}1]$ denotes the affine type $\typeA$ Coxeter diagram with $n$ nodes and 
and $\etypeA[n{-}1]$ is the corresponding extended affine type.
Similarly, $\typeC[n]$ and $\atypeC[n]$ denote the type $\typeC=\typeB$ and 
affine type $\typeC$ (but not affine type $\typeB$) Coxeter diagrams
with $n$ and $n+1$ nodes, respectively.

As mentioned above, the first row of the type $\typeA$ column in \eqref{eq:table} underpins 
Jones's construction of the HOMFLYPT polynomial, 
and the second row has similarly been exploited in topological considerations,
see \eg \cite{OrRa-affine-braids} and \cite{El-gaitsgory-sheaves}.
The type $\typeC$ column, however, has received less attention,
especially in the affine, $g=2$ case, 
where not much appears to be known about 
connections to link invariants.
(However, this case has been explored from a representation-theoretic point of view, 
see \eg \cite{DaRa-two-boundary-hecke}.)
A notable example is work of Geck--Lambropoulou \cite{GeLa-markov-typeB} in the $g=1$ case, 
where a HOMFLYPT polynomial for links in $\hbody[1]$ is constructed via 
the analogue of Jones's construction in type $\typeC$. 
The results in \cite{Ro-homflypt} and \cite{WeWi-markov-geometry} should pair to give a 
categorification of this invariant.
In a companion paper \cite{RoTu-homflypt-typec}, 
we plan to study this invariant, and develop its genus two analogue, 
using type $\typeC$ and affine type $\typeC$ Hecke algebras and Soergel bimodules.

By contrast, our construction in the present paper exploits the relation 
between $\braidg[g,n]$ and $\braidg[g+n]$, and the fact that the latter is an Artin--Tits group. 
Indeed, our construction can be recast as follows.
By viewing the distinguished strands as usual strands, 
we obtain an injective group homomorphism $\braidg[g,n]\hookrightarrow\braidg[g+n]$. 
Since the latter is an Artin--Tits group, we can assign a complex of Soergel bimodules to any 
braid $\braid\in\braidg[g,n]$. 
Now, before taking Hochschild cohomology 
(doing so immediately would give an invariant less-sensitive to the topology of the handlebody),
we glue on an additional Soergel bimodule that allows invariance under the Markov Theorem for 
$\braidg[g,n]$, but not for $\braidg[g+n]$.
In fact, our procedure is slightly more general in that it uses an embedding of $\braidg[g,n]$ 
into the colored braid group, and singular Soergel bimodules.

%%%%%%%%%%%%%%%%%%%%%%%%%%%%%%%%%%%%%%%
\subsection{Future outlook}\label{subsection:intro-outlook}
%%%%%%%%%%%%%%%%%%%%%%%%%%%%%%%%%%%%%%%

In addition to our planned investigation in type $\typeC$ \cite{RoTu-homflypt-typec}, 
we believe there are a number of interesting future directions.

\begin{enumerate}[label=$\bullet$]

\setlength\itemsep{.15cm}

\item \textbf{The relation between $\braidg[g,n]$ and Hecke algebras.} 
These exists a Hecke-like algebra associated to $\braidg[g,n]$ for general $g$, see \eg \cite{La-handlebodies}. 
In the $g=0,1$ cases, this algebra matches the Hecke algebras associated to the 
Artin--Tits groups in the type $\typeA$ column of \eqref{eq:table}.
These algebras have not been widely studied, \eg to our knowledge it is not known 
whether they admit Markov traces or categorifications.

In another direction, 
it is an interesting problem to extend the type $\typeC$ column of \eqref{eq:table} 
to higher genus.  
The presentation of $\braidg[g,n]$ given below in \fullref{definition:alg-braid-group} 
hints to a connection to the Artin--Tits group associated to the Coxeter diagram that is 
obtained from the type $\typeA[n]$ diagram by adjoining $g$ additional vertices. 
These vertices are attached to each other with $\infty$-labeled edges, 
and to the first type $\typeA$ vertex with $4$-labeled edges.
(Something very similar was also observed in \cite[Remark 4]{La-handlebodies}.)
For example, the $g=3$ case is as follows:
\begin{gather}
\raisebox{-.09cm}{$\text{case $g=3$}\colon$}
\begin{tikzcd}[ampersand replacement=\&,row sep=.3cm,column sep=.3cm,
arrows={shorten >=-.5ex,shorten <=-.5ex},labels={inner sep=.5ex}]
\parbox[c][.3cm][c]{.4cm}{$\elstuff{0}^{1}$}
\arrow[ultra thick,xshift=.025cm,yshift=.025cm,-]{rd}{}\arrow[ultra thick,xshift=-.025cm,yshift=-.025cm,-]{rd}{}
\arrow[ultra thick,densely dotted,out=210,in=150,-,swap]{dd}{\infty} 
\& \& \phantom{.} \phantom{.} \& \phantom{.} \& \phantom{.} \& \phantom{.}
\\
\parbox[c][.3cm][c]{.4cm}{$\elstuff{0}^{2}$}\arrow[ultra thick,yshift=.04cm,-]{r}{}\arrow[ultra thick,yshift=-.04cm,-]{r}{}
\arrow[ultra thick,densely dotted,-]{u}{\infty} \arrow[ultra thick,densely dotted,-,swap]{d}{\infty} 
\& \elstuff{1}\arrow[ultra thick,-]{r}{} \& \elstuff{2}\arrow[ultra thick,-]{r}{} 
\& \cdots\arrow[ultra thick,-]{r}{} \& \elstuff{n{-}1} %\arrow[ultra thick,-]{r}{} \& \elstuff{n}
\\
\parbox[c][.3cm][c]{.4cm}{$\elstuff{0}^{3}$}
\arrow[ultra thick,xshift=-.025cm,yshift=.025cm,-]{ru}{}
\arrow[ultra thick,xshift=.025cm,yshift=-.025cm,-]{ru}{} \& \phantom{.} \& \phantom{.} \& \phantom{.} \& \phantom{.} \& \phantom{.}
\end{tikzcd}
\end{gather}
Here, we depict $k$-labeled edges (for $k<\infty$) as $k-2$ unlabeled edges.
In fact, $\braidg[g,n]$ is a quotient of the associated Artin--Tits group, 
so one could hope to extract invariants of $\link\subset\hbody$ from 
(a suitable quotient of)
the corresponding Hecke algebra and/or Soergel bimodules.

\item \textbf{Connections to algebraic geometry.}
Work of Webster--Williamson \cite{WeWi-markov-geometry}
relates the Jones--Ocneanu trace on the type $\typeA$ Hecke algebra to the 
equivariant cohomology of sheaves on $\mathrm{SL}_{n}$, and extends this to other types. 
It would be interesting to identify geometry related to $\braidg[g,n]$ 
and, more generally, links in $\hbody$. 
One fertile avenue is the possible connection between the $g=2$ case and 
exotic Springer fibers as \eg in \cite{SaWi-exotic-springer-cups}.

In a different direction, 
work of Gorsky--Negut--Rasmussen \cite{GNR} conjectures a relation between
the category of type $\typeA$ Soergel bimodules and the flag Hilbert scheme of $\C^{2}$. 
The appearance of the latter can be interpreted as considering the closure of a braid 
$\braid\in\braidg[n]$ in the complement of an $n$-component unlink.
Since the graph giving the complement of $\hbody$ can be viewed as an unlink 
fused at the ``segment at infinity,'' this suggests a connection between the flag Hilbert scheme 
and our invariants.

Finally, another avenue of exploration is to extend various (known or conjectural) 
physical predications concerning ($g=0$) triply-graded homology to higher genus, 
see \eg \cite{GoGuSt-quadruply-graded-homfly} or \cite{GuSt-homological-bps}, 
and \cite[Section 6.3]{QuRoSa-annular-evaluation} or \cite{TuVaWe-super-howe} for related results. 
\end{enumerate}

%%%%%%%%%%%%%%%%%%%%%%%%%%%%%%%%%%%%%%%
\subsection{Conventions}\label{subsection:intro-conventions}
%%%%%%%%%%%%%%%%%%%%%%%%%%%%%%%%%%%%%%%

We now summarize various conventions used in this paper.

\begin{convention}
We work over an arbitrary field $\K$ 
of characteristic $0$. 
This requirement is only needed in \fullref{section:homology}: 
the reader interested in integral versions of our results from 
\fullref{section:genusg} needs to replace the algebraic definition of singular Soergel bimodules of type $\typeA$, 
which we use, by their diagrammatic incarnation \cite[Section 2.5]{ElLo-modular-rep-typea}. 
(The algebraic and the diagrammatic definitions differ when working 
integrally or in characteristic $p$.)
All the results from \fullref{section:genusg} then hold \ver over $\Z$.
However, we do not currently have integral versions of the singular Soergel diagrammatics in \fullref{section:homology}.
\end{convention}

\begin{convention}\label{convention:diagram-conventions}
We will find it convenient to depict morphisms in certain categories 
(and $1$-morphisms in certain $2$-categories) diagrammatically.
We will read such diagrams from bottom-to-top 
(and in the presence of a monoidal 
or $2$-categorical structure, also right-to-left).
These reading conventions are
summarized by
\begin{gather}
\begin{tikzpicture}[anchorbase,scale=.5, tinynodes]
	\draw[usual] (-1,0) to[out=90,in=270] (-1,.5);
	\draw[usual] (0,0) to[out=90,in=270] (0,.5);
	\draw[usual] (1,0) to[out=90,in=270] (1,.5);
	\draw[usual] (2,0) to[out=90,in=270] (2,.5);
	\draw[usual] (-1,1) to[out=90,in=270] (-1,2);
	\draw[usual] (0,1) to[out=90,in=270] (0,2);
	\draw[usual] (1,1) to[out=90,in=270] (1,2);
	\draw[usual] (2,1) to[out=90,in=270] (2,2);
	\draw[usual] (-1,2.5) to[out=90,in=270] (-1,3);
	\draw[usual] (0,2.5) to[out=90,in=270] (0,3);
	\draw[usual] (1,2.5) to[out=90,in=270] (1,3);
	\draw[usual] (2,2.5) to[out=90,in=270] (2,3);
	\draw[thin,black,fill=mygray,fill opacity=.35] (-1.25,.5) rectangle (.25,1);
	\draw[thin,black,fill=mygray,fill opacity=.35] (.75,.5) rectangle (2.25,1);
	\draw[thin,black,fill=mygray,fill opacity=.35] (-1.25,2) rectangle (.25,2.5);
	\draw[thin,black,fill=mygray,fill opacity=.35] (.75,2) rectangle (2.25,2.5);
	\draw[thin,black,densely dotted] (-1.25,0) node[left]{$\obstuff{A}$} to (2.25,0);
	\draw[thin,black,densely dotted] (-1.25,1.5) node[left]{$\obstuff{B}$} to (2.25,1.5);
	\draw[thin,black,densely dotted] (-1.25,3) node[left]{$\obstuff{C}$} to (2.25,3);
	\node at (-.5,.7) {$\morstuff{a}$};
	\node at (-.5,2.175) {$\morstuff{b}$};
	\node at (1.5,.7) {$\morstuff{c}$};
	\node at (1.5,2.175) {$\morstuff{d}$};
\end{tikzpicture}
\leftrightsquigarrow
(\morstuff{b}\morstuff{d})\circ(\morstuff{a}\morstuff{c})\colon\obstuff{A}\to\obstuff{B}\to\obstuff{C}.
\end{gather}
Moreover, 
all such diagrams are invariant under (distant) height exchange isotopy 
(up to isomorphism, in the $2$-categorical context). 
Finally, we will occasionally omit certain data (\eg labelings) from such diagrams 
when it may be
recovered from the given data, or is not important for the argument in question.
\end{convention}

\begin{convention}\label{convention:grading}
We will work with $\Z^{k}$-graded categories throughout, for $k=1,2,3$. 
The three gradings of importance are
the internal degree $\qpar$, the homological degree $\tpar$ 
(both appearing from \fullref{section:genusg} onward), 
and the Hochschild degree $\apar$ (making its appearance in \fullref{section:homology}).

There are competing notions of what is meant by a graded category, 
so we now detail our conventions, focusing on the $\qpar$-degree.
Let $\catstuff{C}$ be a category enriched in $\Z$-graded abelian groups, 
\ie for objects $\obstuff{X}$ and $\obstuff{Y}$, $\Hom_{\catstuff{C}}(\obstuff{X},\obstuff{Y})$ 
is a $\Z$-graded abelian group:
\begin{gather}
\Hom_{\catstuff{C}}(\obstuff{X},\obstuff{Y}) 
= 
\bigoplus_{d\in\Z} 
\Hom_{\catstuff{C}}(\obstuff{X},\obstuff{Y})_{d}
\end{gather}
Given such a category, we can introduce a formal grading-shift functor $\qpar$
and consider the category $\widetilde{\catstuff{C}^{\qpar}}$ 
in which objects are given by formal shifts 
$\qpar^{s}\obstuff{X}$ of objects in $\catstuff{C}$, 
and
\begin{gather}
\Hom_{\widetilde{\catstuff{C}^{\qpar}}}(\qpar^{s}\obstuff{X},\qpar^{t}\obstuff{Y}) 
=\bigoplus_{d\in\Z} 
\Hom_{\catstuff{C}}(\obstuff{X},\obstuff{Y})_{d+t-s}.
\end{gather}
\ie $\widetilde{\catstuff{C}^{\qpar}}$ is again enriched 
in $\Z$-graded abelian groups.
Finally, we let $\catstuff{C}^{\qpar}$ be 
the category with the same objects as $\widetilde{\catstuff{C}^{\qpar}}$, 
but where we restrict to $\qpar$-degree zero morphisms, \ie
\begin{gather}
\Hom_{\catstuff{C}^{\qpar}}(\qpar^{s}\obstuff{X},\qpar^{t}\obstuff{Y}) 
= \Hom_{\catstuff{C}}(\obstuff{X},\obstuff{Y})_{s-t}.
\end{gather}
Note that $\catstuff{C}^{\qpar}$ is not enriched in $\Z$-graded abelian groups, 
but is equipped with an autoequivalence shift functor $\qpar$. 
It is categories of this form that will be of primary interest in this work.

We note, however, that it is possible to recover the $\Z$-graded abelian group 
$\Hom_{\catstuff{C}}(\obstuff{X},\obstuff{Y})$ from the category $\catstuff{C}^{\qpar}$. 
Indeed, we can consider the $\Z$-graded abelian group
\begin{gather}\label{eq:HOMdef}
\HOM_{\catstuff{C}^\qpar}(\obstuff{X},\obstuff{Y})
:=
\bigoplus_{d}\Hom_{\catstuff{C}^{\qpar}}(\qpar^{d}\obstuff{X},\obstuff{Y})
\end{gather}
and we note that 
\begin{gather}
\HOM_{\catstuff{C}^{\qpar}}(\qpar^{s}\obstuff{X},\qpar^{t}\obstuff{Y}) 
=\qpar^{t\tm s}\HOM_{\catstuff{C}^{\qpar}}(\obstuff{X},\obstuff{Y}),
\end{gather}
where on the right-hand side the power $\qpar$ denotes 
a shift of the indicated $\Z$-graded abelian group.

Our consideration of $\Z^{2}$- and $\Z^{3}$-graded categories is analogous -- 
in these cases we have additional shift functors $\tpar$ and $\apar$, 
and we restrict to $\tpar$- and $\apar$-degree 
zero maps, unless otherwise indicated. 
However, we will 
reserve the capitalization notation $\HOM$ when considering ``graded $\Hom$s'' 
with respect to the $\qpar$-degree only.

Lastly, we note that these considerations carry over to 
$2$-categories as well, where the above 
applies to the $\Hom$-categories in our $2$-category, 
\ie to the $1$- and $2$-morphisms.
\end{convention}
\medskip

%%%%%%%%%%
\noindent\textbf{Acknowledgments.} 
This project began during a visit of the first named author to the Universit{\"a}t Z{\"u}rich, 
and he thanks them for excellent working conditions.
It continued during visits of both authors to the Kavli Institute for Theoretical Physics 
for the program Quantum Knot Invariants and Supersymmetric Gauge Theories. 
As such, this research was supported in part by the 
National Science Foundation under Grant No. NSF PHY-1748958.
We thank them for the engaging and collaborative atmosphere.
Some of the ideas for this project came up during a discussion 
of the second author and Catharina Stroppel, and he thanks her 
for freely sharing her ideas about HOMFLYPT homology 
outside of type $\typeA$.

Moreover, D.E.V.R. thanks Andrea Appel, Ben Elias, and Matt Hogancamp for useful discussions, 
and D.T. thanks Ben Elias, Aaron Lauda, Anthony Licata, Catharina Stroppel, Emmanuel Wagner 
and Arik Wilbert for related discussions about the 
topology behind Artin--Tits groups, Mikhail Khovanov, Paul Wedrich and Oded Yacobi for 
helpful conversations and encouraging words, and also his office chair for supporting this project.

Finally, D.E.V.R. was partially supported by Simons Collaboration Grant 523992, 
and D.T. is grateful to NCCR SwissMAP for generous support that, in particular, 
partially financed the first author's visit to Z{\"u}rich.

\section{Links and braids in handlebodies}\label{section:links-handlebodies}

In this section we
collect result concerning links and braids in handlebodies.

\subsection{Topological recollections}\label{subsec:links-handlebodies-top}

Recall that a handlebody $\hbody$ of genus $g$ is the compact, 
orientable $3$-manifold with boundary obtained 
by attaching $g$ $1$-handles to the closed $3$-ball $\hbody[0]$.
An explicit model for $\hbody$, that we call the standard presentation, 
is given by the ``inside'' of a standardly embedded genus $g$ surface $\gsurface\subset\thesphere$ 
in the $3$-sphere $\thesphere$, 
\ie the $3$-manifold given by the union of $\gsurface$ with the component of 
$\thesphere\smallsetminus\gsurface$ that does not contain the point at infinity.

We will typically work with another presentation for $\hbody$, 
given by the closure in $\thesphere$ of the complement of an auxiliary handlebody $\hbody^{c}$.
We view $\hbody^{c}$ as consisting of $g$ parallel $1$-handles 
that are attached to the closure of a small neighborhood of $\infty\in\thesphere$.
See the gray
portion of the first figure in \fullref{example:closing} for the $g=3$ case.
In order to connect with the categorical representation theory used to produce our link invariant, 
we note that $\hbody^{c}$ is isotopic to a closed neighborhood of the embedded graph obtained by taking 
$g$ parallel edges, called the ``core strands'', each meeting a $(g+1)$-valent vertex, 
together with an additional ``edge at infinity'' joining the two vertices.
We will typically view the edge at infinity as being near the top of the core strands, 
hereby viewing $\hbody^{c}$ as being obtained from the core strands by first gluing on a graph with 
$g$ 1-valent vertices at its top and bottom (and two $(g+1)$-valent vertices) and then taking their closure.
See the gray portion of the second figure in \fullref{example:closing}.

We will refer to this presentation of $\hbody=\overline{\thesphere\smallsetminus\hbody^{c}}$ as the costandard presentation, 
and note that it contains a copy of the standard presentation, 
to which it is isotopic, given by intersecting with a 
closed $3$-ball that meets each core strand in a segment.

We consider oriented links $\link\subset\hbody$, which, 
in the costandard presentation, are given
by links in $\thesphere\smallsetminus\hbody^{c}$.
Equivalently, using the isotopy with the standard presentation, such links are given by 
links in the closed $3$-ball that avoid its intersection with the core strands.
Finally, two links in $\link,\link^{\prime}\subset\hbody$ are isotopic, 
denoted by $\link\ouriso[\sim]\link^{\prime}$, if and only 
if the corresponding links in $\thesphere$ (in the costandard presentation)
are isotopic through an isotopy that keeps $\hbody^{c}$ fixed pointwise.

\subsection{Alexander's theorem}\label{subsec:links-alexander}

There is a corresponding notion of $n$ strand braids in a genus $g$ handlebody. 
Strictly speaking, we define $\braidg[g,n]$ to be the braid group of the surface 
$\thedisk_{g}$ given as the complement of $g$ disjoint open disks in the closed disk $\thedisk$.
The standard presentation of $\hbody$ may be identified with the product $\thedisk_{g}\times[0,1]\subset\thesphere$, 
so braids in $\braidg[g,n]$ ``live in'' $\hbody$.
The group $\braidg[g,n]$, which we call the handlebody braid group, 
can be equivalently described as follows. 
There is a subgroup of the classical braid group $\braidg[g+n]$ on $g+n$ strands
consisting of braids that are pure on the first $g$ strands, 
and a homomorphism from this subgroup to the classical braid group $\braidg[g]$ 
given by forgetting the final $n$ strands.
The kernel of this homomorphism is precisely $\braidg[g,n]$.
(Informally, $\braidg[g,n]$ consists of braids on $g+n$ strands 
in which the first $g$ strands do not braid among themselves.)
Slightly abusing notation, we will again refer to the first $g$ strands 
of a braid in $\braidg[g,n]$ as core strands; 
they correspond to the core strands above as we now describe.

The handlebody braid group $\braidg[g,n]$ is related to links in $\hbody$ in a 
manner paralleling the relation between 
the classical braid group $\braidg[n]$ and links in $\thesphere$.
That is, given a braid $\braid\in\braidg[g,n]$, 
one obtains a link $\bclosure[\braid]\subset\hbody$
via a closure procedure as follows: 
the first $g$ strands in $\braid$ are joined at each of their ends to 
the point at infinity, and the remainder of the braid is closed as in the classical case.
In this way, we obtain a link $\bclosure[\braid]\subset\hbody$ where the closure of the 
last $n$ strands constitutes $\bclosure[\braid]$, 
and the first $g$ strands in $\braid$ become the core strands in $\hbody^c$.
As in our discussion of $\hbody^c$ above, 
we will typically work with an equivalent closure procedure, 
which again corresponds to expanding the point at infinity to an edge, 
and moving it near the top of the core strands.
Specifically, the closure procedure consists of merging the $g$ core strands to meet
the strand at infinity, then splitting the strand at infinity into $g$ strands, 
and finally taking the standard closure of all strands.

\begin{example}\label{example:lowgenus}
We have $\braidg[0,n]\ouriso\braidg[n]$, which corresponds to links in
the closed $3$-ball $\hbody[0]$; we call this the classical case. 
In genus one, $\braidg[1,n]$ consists of all braids 
in $\braidg[1+n]$ that are pure on the first strand, 
and $\hbody[1]$ is a solid torus.
\end{example}

\begin{example}\label{example:closing}
Here we illustrate the closure procedure for $\braid\in\braidg[3,4]$.
\begin{gather}
\xy
(0,0)*{
\scalebox{.7}{$
\begin{tikzpicture}[anchorbase,scale=.5,tinynodes]
	\draw[pole,crosspole] (-3,0) to[out=90,in=270] (-3,1);
	\draw[pole,crosspole] (-2,0) to[out=90,in=270] (-2,1);
	\draw[pole,crosspole] (-1,0) to[out=90,in=270] (-1,1);
	\draw[usual,crossline] (0,0) to[out=90,in=270] (1,1);
	\draw[usual,crossline] (1,0) to[out=90,in=270] (0,1);
	\draw[usual,crossline] (2,0) to[out=90,in=270] (2,1);
	\draw[usual,crossline] (3,0) to[out=90,in=270] (3,1);
	\draw[pole,crosspole] (-3,1) to[out=90,in=270] (-3,2);
	\draw[pole,crosspole] (-2,1) to[out=90,in=270] (-2,2);
	\draw[usual,crossline] (0,1) to[out=90,in=270] (-1,2);
	\draw[pole,crosspole] (-1,1) to[out=90,in=270] (0,2);
	\draw[usual,crossline] (3,1) to[out=90,in=270] (1,2);
	\draw[usual,crossline] (1,1) to[out=90,in=270] (2,2);
	\draw[usual,crossline] (2,1) to[out=90,in=270] (3,2);
	\draw[pole,crosspole] (-3,2) to[out=90,in=270] (-3,3);
	\draw[usual,crossline] (-1,2) to[out=90,in=270] (-2,3);
	\draw[pole,crosspole] (-2,2) to[out=90,in=270] (-1,3);
	\draw[pole,crosspole] (0,2) to[out=90,in=270] (0,3);
	\draw[usual,crossline] (1,2) to[out=90,in=270] (1,3);
	\draw[usual,crossline] (2,2) to[out=90,in=270] (3,3);
	\draw[usual,crossline] (3,2) to[out=90,in=270] (2,3);
	\draw[pole,crosspole] (-3,3) to[out=90,in=270] (-3,4);
	\draw[pole,crosspole] (-1,3) to[out=90,in=270] (-2,4);
	\draw[pole,crosspole] (0,3) to[out=90,in=270] (-1,4);
	\draw[usual,crossline] (-2,3) to[out=90,in=270] (0,4);
	\draw[usual,crossline] (1,3) to[out=90,in=270] (3,4);
	\draw[usual,crossline] (2,3) to[out=90,in=270] (1,4);
	\draw[usual,crossline] (3,3) to[out=90,in=270] (2,4);
	\draw[pole,crosspole] (-3,4) to[out=90,in=270] (-3,5);
	\draw[pole,crosspole] (-2,4) to[out=90,in=270] (-2,5);
	\draw[pole,crosspole] (-1,4) to[out=90,in=270] (-1,5);
	\draw[usual,crossline,directed=.99] (1,4) to[out=90,in=270] (0,5);
	\draw[usual,crossline,directed=.99] (0,4) to[out=90,in=270] (2,5);
	\draw[usual,crossline,directed=.99] (2,4) to[out=90,in=270] (1,5);
	\draw[usual,crossline,directed=.99] (3,4) to[out=90,in=270] (3,5);
	\draw[cusual] (3,0) to[out=270,in=180] (3.5,-.5) 
	to[out=0,in=270] (4,0) to[out=90,in=270] (4,5) to[out=90,in=0] (3.5,5.5) to[out=180,in=90] (3,5);
	\draw[cusual] (2,0) to[out=270,in=180] (3.5,-1.0) 
	to[out=0,in=270] (5,0) to[out=90,in=270] (5,5) to[out=90,in=0] (3.5,6.0) to[out=180,in=90] (2,5);
	\draw[cusual] (1,0) to[out=270,in=180] (3.5,-1.5) 
	to[out=0,in=270] (6,0) to[out=90,in=270] (6,5) to[out=90,in=0] (3.5,6.5) to[out=180,in=90] (1,5);
	\draw[cusual] (0,0) to[out=270,in=180] (3.5,-2.0) 
	to[out=0,in=270] (7,0) to[out=90,in=270] (7,5) to[out=90,in=0] (3.5,7.0) to[out=180,in=90] (0,5);
	\draw[cpole] (-3,0) to[out=270,in=180] (-2,-.5);
	\draw[cpole] (-2,0) to[out=270,in=90] (-2,-.5);
	\draw[cpole] (-1,0) to[out=270,in=0] (-2,-.5);
	\draw[cpole] (-3,5) to[out=90,in=180] (-2,5.5);
	\draw[cpole] (-2,5) to[out=90,in=270] (-2,5.5);
	\draw[cpole] (-1,5) to[out=90,in=0] (-2,5.5);
	\draw node[pole] at (-2,5.25) {{\color{infinity}\LARGE$\bullet$}};
	\draw node[pole, above] at (-2,5.45) {\color{infinity}$\infty$};
	\draw node[pole] at (-2,-.75) {{\color{infinity}\LARGE$\bullet$}};
	\draw node[pole, below] at (-2,-.55) {\color{infinity}$\infty$};
	\draw[thick,mygray,fill=citron,fill opacity=.1] (-3.5,5) 
	to (3.5,5) to (3.5,0) to (-3.5,0) to (-3.5,5);
	\node at (-2.5,2.5) {$\braid$};
	\node at (3.5,-4) {\phantom{a}};
	\node at (3.5,7.4) {$\bclosure[\braid]$};
	\node [pole] at (3.5,10.0) {$\phantom{\hbody[3]^{c}}$};
	\node [infinity] at (-.75,5.6) {$\hbody[3]^{c}$};
	\node at (-4.0,5.6) {$\thesphere$};
\end{tikzpicture}
$}
\quad
\raisebox{-.35cm}{$\ouriso[\sim]$}
\;
\scalebox{.7}{$
\begin{tikzpicture}[anchorbase,scale=.5,tinynodes]
	\draw[pole,crosspole] (-3,0) to[out=90,in=270] (-3,1);
	\draw[pole,crosspole] (-2,0) to[out=90,in=270] (-2,1);
	\draw[pole,crosspole] (-1,0) to[out=90,in=270] (-1,1);
	\draw[usual,crossline] (0,0) to[out=90,in=270] (1,1);
	\draw[usual,crossline] (1,0) to[out=90,in=270] (0,1);
	\draw[usual,crossline] (2,0) to[out=90,in=270] (2,1);
	\draw[usual,crossline] (3,0) to[out=90,in=270] (3,1);
	\draw[pole,crosspole] (-3,1) to[out=90,in=270] (-3,2);
	\draw[pole,crosspole] (-2,1) to[out=90,in=270] (-2,2);
	\draw[usual,crossline] (0,1) to[out=90,in=270] (-1,2);
	\draw[pole,crosspole] (-1,1) to[out=90,in=270] (0,2);
	\draw[usual,crossline] (3,1) to[out=90,in=270] (1,2);
	\draw[usual,crossline] (1,1) to[out=90,in=270] (2,2);
	\draw[usual,crossline] (2,1) to[out=90,in=270] (3,2);
	\draw[pole,crosspole] (-3,2) to[out=90,in=270] (-3,3);
	\draw[usual,crossline] (-1,2) to[out=90,in=270] (-2,3);
	\draw[pole,crosspole] (-2,2) to[out=90,in=270] (-1,3);
	\draw[pole,crosspole] (0,2) to[out=90,in=270] (0,3);
	\draw[usual,crossline] (1,2) to[out=90,in=270] (1,3);
	\draw[usual,crossline] (2,2) to[out=90,in=270] (3,3);
	\draw[usual,crossline] (3,2) to[out=90,in=270] (2,3);
	\draw[pole,crosspole] (-3,3) to[out=90,in=270] (-3,4);
	\draw[pole,crosspole] (-1,3) to[out=90,in=270] (-2,4);
	\draw[pole,crosspole] (0,3) to[out=90,in=270] (-1,4);
	\draw[usual,crossline] (-2,3) to[out=90,in=270] (0,4);
	\draw[usual,crossline] (1,3) to[out=90,in=270] (3,4);
	\draw[usual,crossline] (2,3) to[out=90,in=270] (1,4);
	\draw[usual,crossline] (3,3) to[out=90,in=270] (2,4);
	\draw[pole,crosspole] (-3,4) to[out=90,in=270] (-3,5);
	\draw[pole,crosspole] (-2,4) to[out=90,in=270] (-2,5);
	\draw[pole,crosspole] (-1,4) to[out=90,in=270] (-1,5);
	\draw[usual,crossline,directed=.99] (1,4) to[out=90,in=270] (0,5);
	\draw[usual,crossline,directed=.99] (0,4) to[out=90,in=270] (2,5);
	\draw[usual,crossline,directed=.99] (2,4) to[out=90,in=270] (1,5);
	\draw[usual,crossline,directed=.99] (3,4) to[out=90,in=270] (3,5);
	\draw[cusual] (3,0) to[out=270,in=180] (3.5,-.5) 
	to[out=0,in=270] (4,0) to[out=90,in=270] (4,5) to[out=90,in=0] (3.5,5.5) to[out=180,in=90] (3,5);
	\draw[cusual] (2,0) to[out=270,in=180] (3.5,-1.0) 
	to[out=0,in=270] (5,0) to[out=90,in=270] (5,5) to[out=90,in=0] (3.5,6.0) to[out=180,in=90] (2,5);
	\draw[cusual] (1,0) to[out=270,in=180] (3.5,-1.5) 
	to[out=0,in=270] (6,0) to[out=90,in=270] (6,5) to[out=90,in=0] (3.5,6.5) to[out=180,in=90] (1,5);
	\draw[cusual] (0,0) to[out=270,in=180] (3.5,-2.0) 
	to[out=0,in=270] (7,0) to[out=90,in=270] (7,5) to[out=90,in=0] (3.5,7.0) to[out=180,in=90] (0,5);
	\draw[cpole] (-3,5) to[out=90,in=180] (-2,5.5);
	\draw[cpole] (-2,5) to[out=90,in=270] (-2,5.5);
	\draw[cpole] (-1,5) to[out=90,in=0] (-2,5.5);
	\draw[cpole] (-2,5.5) to (-2,6.5);
	\draw[cpole] (-3,7) to[out=270,in=180] (-2,6.5);
	\draw[cpole] (-2,7) to[out=270,in=270] (-2,6.5);
	\draw[cpole] (-1,7) to[out=270,in=0] (-2,6.5);
	\draw[cpole] (-1,7) to[out=90,in=180] (3.5,9) to[out=0,in=90] (8,7) to (8,0) 
	to[out=270,in=0] (3.5,-2.5) to[out=180,in=270] (-1,0);
	\draw[cpole] (-2,7) to[out=90,in=180] (3.5,9.5) to[out=0,in=90] (9,7) to (9,0)
	to[out=270,in=0] (3.5,-3) to[out=180,in=270] (-2,0);
	\draw[cpole] (-3,7) to[out=90,in=180] (3.5,10) to[out=0,in=90] (10,7) to (10,0) 
	to[out=270,in=0] (3.5,-3.5) to[out=180,in=270] (-3,0);
	\draw[thick,mygray,fill=citron,fill opacity=.1] (-3.5,5) 
	to (3.5,5) to (3.5,0) to (-3.5,0) to (-3.5,5);
	\node at (-2.5,2.5) {$\braid$};
	\node at (3.5,-4) {\phantom{a}};
	\node at (3.5,7.4) {$\bclosure[\braid]$};
	\node [pole] at (3.5,10.0) {$\phantom{\hbody[3]^{c}}$};
	\node [infinity] at (-.75,5.6) {$\hbody[3]^{c}$};
	\node at (-4.0,5.6) {$\thesphere$};
\end{tikzpicture}
$}
};
(0,-27)*{\braid\in\braidg[3,4],\,\bclosure[\braid]\subset\hbody[3]};
\endxy
\end{gather}
The braid itself is depicted as the solid strands in the indicated rectangle, 
while the dashed edges correspond to the closure procedure described above.
The thin, black components (both solid and dashed) give the link 
$\bclosure[\braid]$, 
while the thick, gray graph (again, both solid and dashed) depicts $\hbody^c$.
\end{example} 

The next result shows that, up to isotopy, all links in 
$\hbody$ arise from the closure procedure 
for handlebody braids described above.
The proof is analogous to the classical case.

\begin{theoremqed}(Alexander's Theorem in a handlebody; 
\cite[Theorem 2]{HaOlLa-handlebodies}.)\label{theorem:alexander}
Given a link $\link\subset\hbody$ there exists a braid $\braid\in\braidg[g,n]$ such that 
$\bclosure[\braid]\ouriso[\sim]\link\subset\hbody$.
\end{theoremqed}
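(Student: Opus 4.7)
The plan is to adapt the classical proof of Alexander's theorem to this setting by keeping careful track of the core strands throughout. I would work in the costandard presentation of $\hbody$, so that $\hbody^{c}\subset\thesphere$ is a closed regular neighborhood of a graph $\graphy$ consisting of $g$ parallel vertical arcs joined at two vertices (at $\infty$ above and below), and $\link$ is a link in the complement $\thesphere\smallsetminus\hbody^{c}$ that avoids this graph.

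First I would choose a braid axis $A\subset\thesphere$, i.e., an unknotted circle disjoint from both $\link$ and $\hbody^{c}$, with the additional property that each of the $g$ core strands is unknotted with respect to $A$ and pierces every meridional disk of $A$ transversely in exactly one point. Concretely, one can take $A$ to be a standard vertical axis lying in the region around which the core strands are arranged. Relative to this axis, the $g$ core strands already braid in the simplest possible way around $A$ (they are literally vertical), contributing $g$ pure strands to any braid presentation around $A$.

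Next I would carry out the usual Alexander trick on $\link$: project $\link$ to a disk transverse to $A$, classify the arcs of the projection as \emph{good} (consistently oriented around $A$) or \emph{bad}, and push bad arcs one at a time over (or under) the axis $A$ to convert them into good arcs. After finitely many such moves, $\link$ braids monotonically around $A$. Combined with the $g$ core strands, this produces a braid in $\braidg[g+n]$ that is pure on the first $g$ strands, hence an element $\braid\in\braidg[g,n]$, whose closure is isotopic to $\link\subset\hbody$.

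The main obstacle is ensuring that the Alexander moves can be performed \emph{rel} $\hbody^{c}$: the pushed arcs must not cross the core strands, since such a crossing would destroy the handlebody link type. This is handled by the choice of $A$ together with the freedom in the Alexander trick to choose on which side of $A$ each push is performed. Since $\hbody^{c}$ retracts onto $\graphy$, and $\graphy\cup A$ is a standard configuration in $\thesphere$, one has a product neighborhood in which Alexander moves can be routed around $\hbody^{c}$ without introducing new intersections. A careful general position argument, essentially the one in \cite{HaOlLa-handlebodies}, then shows each bad arc can be resolved within the complement of $\hbody^{c}$, completing the reduction.
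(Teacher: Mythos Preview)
The paper does not give its own proof of this statement: it is stated in a \texttt{theoremqed} environment with a citation to \cite[Theorem 2]{HaOlLa-handlebodies}, and the only comment is the sentence immediately preceding it, namely that ``the proof is analogous to the classical case.'' So there is nothing in the paper to compare your argument against beyond that one remark.

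That said, your sketch is exactly the kind of argument the paper is alluding to, and it is also the approach taken in \cite{HaOlLa-handlebodies}: fix a braid axis so that the $g$ core strands are already vertical about it, then run the classical Alexander algorithm on the link strands, taking care that the arc-pushing moves avoid the core graph. Your identification of the main obstacle (keeping the Alexander moves rel $\hbody^{c}$) and its resolution (general position and the freedom to push on either side of the axis) is the correct point, and is precisely where the handlebody version differs from the classical one. As a self-contained proof your write-up would want a slightly more explicit description of how a bad arc that winds nontrivially around some core strands is handled, but as a proof outline matching the literature it is accurate.
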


\subsection{Generators and relations for braids in handlebodies}\label{subsec:links-handlebodies-dia}

We now recall the algebraic presentation of $\braidg[g,n]$.

\begin{definition}\label{definition:alg-braid-group}
The group $\braidd[g,n]$
is the group generated by $\bgen[1],\dots,\bgen[n{-}1]$ 
and $\tgen[1],\dots,\tgen[g]$, called braid and twist generators, respectively,
subject to the relations
\begin{align}
\label{eq:braid-rels-typeA}
\bgen[j]\bgen\bgen[j]=\bgen\bgen[j]\bgen\;\text{ if }|i-j]=1
&,\quad
\bgen[j]\bgen=\bgen\bgen[j]\;\text{ if }|i-j]>2,
\\
\label{eq:braid-rels-typeC}
\bgen[1]\tgen[i]\bgen[1]\tgen[i]=\tgen[i]\bgen[1]\tgen[i]\bgen[1]
&,\quad
\bgen\tgen[j]=\tgen[j]\bgen\;\text{ if }i>2,
\\
\label{eq:braid-rels-special}
\big(\bgen[1]\tgen[i]\bgen[1]^{-1}\big)\tgen[j]
=&\tgen[j]\big(\bgen[1]\tgen[i]\bgen[1]^{-1}\big)\;
\text{ for }i<j.
\end{align}
By convention, $\braidd[g,0]=\{1\}$, and we omit the twist generators when $g=0$ and 
the braid generators when $n=1$.
\end{definition}

The following theorem identifies $\braidg[g,n]$ and $\braidd[g,n]$, 
and we likewise do for the duration. 
In particular, we identify $\braidd[n]=\braidd[0,n]$ 
and $\braidg[n]$.

\begin{proposition}\label{theorem:alg-braid-group}
(\cite[Theorem 1]{Ve-handlebodies} \& \cite[Section 5]{La-handlebodies}.)
There is an isomorphism of groups
\begin{equation}
\pushQED{\qed}
\braidd[g,n]\ouriso\braidg[g,n].
\qedhere
\popQED
\end{equation}
\end{proposition}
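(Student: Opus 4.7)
The plan is to construct a group homomorphism $\varphi \colon \braidd[g,n] \to \braidg[g,n]$ by sending each algebraic generator to its evident topological incarnation: $\varphi(\bgen[i])$ is the standard positive crossing between the $i$-th and $(i+1)$-th usual strands, and $\varphi(\tgen[k])$ is the braid in which the first usual strand encircles the $k$-th puncture of $\thedisk_{g}$ once (say counterclockwise) and returns to its base point, with all remaining strands vertical. Well-definedness reduces to checking that the relations of \fullref{definition:alg-braid-group} can be realized by ambient isotopies in $\thedisk_{g} \times [0,1]$. The type $\typeA$ relations \eqref{eq:braid-rels-typeA} are classical, and the commutation parts of \eqref{eq:braid-rels-typeC} are geometrically clear because the twist $\tgen[j]$ only involves the first usual strand. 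The mixed four-term relation $\bgen[1]\tgen[i]\bgen[1]\tgen[i]=\tgen[i]\bgen[1]\tgen[i]\bgen[1]$ is verified by drawing both sides and exhibiting an isotopy. The relation \eqref{eq:braid-rels-special} expresses the geometric fact that, once the loop around the $i$-th puncture has been pushed past the first puncture via conjugation by $\bgen[1]$, it becomes disjoint from a loop around the $j$-th puncture for $j>i$.

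For surjectivity, I would project an arbitrary $\braid \in \braidg[g,n]$ to a generic vertical plane so that only finitely many critical events occur, each of which is either a crossing of two usual strands or a passage of a usual strand across the vertical cylinder above a puncture. The former events produce $\bgen[i]^{\pm 1}$ factors, and the latter produce conjugates of $\tgen[k]^{\pm 1}$ by classical braid words. Using the defining relations of $\braidd[g,n]$, each such conjugate can be rewritten in the image of $\varphi$, so $\varphi$ is onto.

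The main obstacle is injectivity, for which I would exploit the iterated semidirect product structure sketched in the excerpt. Topologically, $\braidg[g,n]$ embeds into $\braidg[g+n]$ as the subgroup of braids that are pure on the first $g$ strands, and the Fadell--Neuwirth fibrations obtained by successively deleting punctures in $\thedisk_{g}$ realize $\braidg[g,n]$ as an iterated extension whose kernels are known free groups. On the algebraic side, the relations \eqref{eq:braid-rels-typeC} and \eqref{eq:braid-rels-special} allow one to extract a parallel decomposition $\braidd[g,n] \ouriso N \rtimes \braidd[n]$, where $N$ is the normal closure of the twist generators and is generated by the conjugates of the $\tgen[k]$ by pure classical braid elements, with the $\bgen[1]\tgen[i]\bgen[1]^{-1}$ of \eqref{eq:braid-rels-special} giving the prototypical such conjugates. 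Matching the two decompositions through $\varphi$ yields that $\varphi$ restricts to an isomorphism between the corresponding normal subgroups and the corresponding quotients, and hence is itself an isomorphism. As this matching is the content of the classical arguments of \cite{Ve-handlebodies} and \cite[Section 5]{La-handlebodies}, I would appeal to those references for the detailed verification that $N$ admits precisely the free basis the relations predict, which is the delicate part of the proof.
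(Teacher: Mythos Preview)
The paper does not give its own proof of this proposition: it is stated with a $\square$ and attributed entirely to \cite{Ve-handlebodies} and \cite{La-handlebodies}, with the explicit isomorphism recorded immediately afterward in \eqref{eq:braid-group-iso} and the geometric verification of the relations relegated to \fullref{example:alg-braid-group}, where it is noted that ``the non-trivial statement\dots is that these relations are sufficient.'' Your proposal is consistent with this: you construct the same explicit homomorphism, verify the relations by the same isotopies, and for the hard direction (injectivity/sufficiency) you sketch the Fadell--Neuwirth semidirect-product strategy before ultimately deferring to the same two references. So your approach matches the paper's, with the only difference being that you supply more of the outline that the paper leaves entirely to the citations.
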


An explicit isomorphism realizing \fullref{theorem:alg-braid-group} is given on the braid 
and the twist generators as follows:
\begin{gather}\label{eq:braid-group-iso}
\bgen[i]
\mapsto
\begin{tikzpicture}[anchorbase,scale=.5,tinynodes]
	\draw[pole,crosspole] (0,0) to[out=90,in=270] (0,1.5);
	\draw[pole,crosspole] (1,0) to[out=90,in=270] (1,1.5);
	\draw[usual,crossline,directed=1] (2,0) to[out=90,in=270] (2,1.5);
	\draw[usual,crossline,directed=1] (4,0) node[below]{i\tp 1} 
	to[out=90,in=270] (3,1.5) node[above,yshift=-2pt]{i};
	\draw[usual,crossline,directed=1] (3,0) node[below]{i} 
	to[out=90,in=270] (4,1.5) node[above,yshift=-2pt]{i\tp 1};
	\draw[usual,crossline,directed=1] (5,0) to[out=90,in=270] (5,1.5);
	\node at (.5,.755) {$...$};
	\node at (2.5,.755) {$...$};
	\node at (4.5,.755) {$...$};
\end{tikzpicture}
\quad\&\quad
\tgen[i]
\mapsto
\begin{tikzpicture}[anchorbase,scale=.5,tinynodes]
	\draw[pole,crosspole] (0,0) to[out=90,in=270] (0,1.5);
	\draw[pole,crosspole] (2,0) to[out=90,in=270] (2,1.5);
	\draw[usual,crossline] (3,0) node[below]{1} 
	to[out=90,in=0] (2.75,.5) to[out=180,in=0] (.75,.5) to[out=180,in=270] (.5,.75);
	\draw[pole,crosspole] (1,0) node[below,infinity]{i} to[out=90,in=270] (1,1.5) node[above,yshift=-2pt,infinity]{i};
	\draw[usual,crossline,directed=1] (.5,.75) to[out=90,in=180] (.75,1) 
	to[out=0,in=180] (2.75,1) to[out=0,in=270] (3,1.5) node[above,yshift=-2pt]{1};
	\draw[usual,crossline,directed=1] (4,0) to[out=90,in=270] (4,1.5);
	\draw[usual,crossline,directed=1] (5,0) to[out=90,in=270] (5,1.5);
	\node at (.5,.3) {$...$};
	\node at (.5,1.3) {$...$};
	\node at (1.5,.3) {$...$};
	\node at (1.5,1.3) {$...$};
	\node at (4.5,.755) {$...$};
\end{tikzpicture}
\end{gather}
The inverse of $\bgen[i]$ is given, as usual, by the corresponding opposite crossing,
and the inverse of $\tgen[i]$ is given as above, but with the braid strand wrapping the $i^{\mathrm{th}}$ 
core strand oppositely (however, it still crosses over the other core strands).

\begin{example}\label{example:alg-braid-group}
Under this map,
the first relation in \eqref{eq:braid-rels-typeA} corresponds to the braid-like 
Reidemeister III relation, 
and the second relations in \eqref{eq:braid-rels-typeA} and \eqref{eq:braid-rels-typeC} correspond 
to planar isotopy.
The four term relation in \eqref{eq:braid-rels-typeC} and 
the relation in \eqref{eq:braid-rels-special} become \eg 
the relations \eqref{eq:braid-typeAC} and \eqref{eq:braid-typeAC-twist}, respectively:
\begin{align}
\label{eq:braid-typeAC}
\bgen[1]\tgen[i]\bgen[1]\tgen[i]=
\tgen[i]\bgen[1]\tgen[i]\bgen[1]
&\leftrightsquigarrow\hspace*{-.1cm}
\scalebox{.85}{$
\begin{tikzpicture}[anchorbase,scale=.5,tinynodes]
	\draw[pole,crosspole] (2,0) to[out=90,in=270] (2,1.5);
	\draw[usual,crossline] (3,0) node[below]{1} to[out=90,in=0] (2.75,.5) 
	to[out=180,in=0] (.75,.5) to[out=180,in=270] (.5,.75);
	\draw[pole,crosspole] (1,0) node[below,infinity]{i} to[out=90,in=270] (1,1.5);
	\draw[usual,crossline] (.5,.75) to[out=90,in=180] (.75,1) 
	to[out=0,in=180] (2.75,1) to[out=0,in=270] (3,1.5);
	\draw[usual,crossline] (4,0) to[out=90,in=270] (4,1.5);
	\draw[pole,crosspole] (1,1.5) to[out=90,in=270] (1,3);
	\draw[pole,crosspole] (2,1.5) to[out=90,in=270] (2,3);
	\draw[usual,crossline] (4,1.5) to[out=90,in=270] (3,3);
	\draw[usual,crossline] (3,1.5) to[out=90,in=270] (4,3);
	\draw[pole,crosspole] (2,3) to[out=90,in=270] (2,4.5);
	\draw[usual,crossline] (3,3) to[out=90,in=0] (2.75,3.5) 
	to[out=180,in=0] (.75,3.5) to[out=180,in=270] (.5,3.75);
	\draw[pole,crosspole] (1,3) to[out=90,in=270] (1,4.5);
	\draw[usual,crossline] (.5,3.75) to[out=90,in=180] (.75,4) 
	to[out=0,in=180] (2.75,4) to[out=0,in=270] (3,4.5);
	\draw[usual,crossline] (4,3) to[out=90,in=270] (4,4.5);
	\draw[pole,crosspole] (1,4.5) to[out=90,in=270] (1,6) node[above,yshift=-2pt,infinity]{i};
	\draw[pole,crosspole] (2,4.5) to[out=90,in=270] (2,6);
	\draw[usual,crossline,directed=1] (4,4.5) to[out=90,in=270] (3,6) node[above,yshift=-2pt]{1};
	\draw[usual,crossline,directed=1] (3,4.5) to[out=90,in=270] (4,6);
	\node at (1.5,.2) {$...$};
	\node at (1.5,5.8) {$...$};
\end{tikzpicture}
$}
=\hspace*{-.1cm}
\scalebox{.85}{$
\begin{tikzpicture}[anchorbase,scale=.5,tinynodes]
	\draw[pole,crosspole] (1,-1.5) node[below,infinity]{i} to[out=90,in=270] (1,0);
	\draw[pole,crosspole] (2,-1.5) to[out=90,in=270] (2,0);
	\draw[usual,crossline] (4,-1.5) to[out=90,in=270] (3,0);
	\draw[usual,crossline] (3,-1.5) node[below]{1} to[out=90,in=270] (4,0);
	\draw[pole,crosspole] (2,0) to[out=90,in=270] (2,1.5);
	\draw[usual,crossline] (3,0) to[out=90,in=0] (2.75,.5) 
	to[out=180,in=0] (.75,.5) to[out=180,in=270] (.5,.75);
	\draw[pole,crosspole] (1,0) to[out=90,in=270] (1,1.5);
	\draw[usual,crossline] (.5,.75) to[out=90,in=180] (.75,1) 
	to[out=0,in=180] (2.75,1) to[out=0,in=270] (3,1.5);
	\draw[usual,crossline] (4,0) to[out=90,in=270] (4,1.5);
	\draw[pole,crosspole] (1,1.5) to[out=90,in=270] (1,3);
	\draw[pole,crosspole] (2,1.5) to[out=90,in=270] (2,3);
	\draw[usual,crossline] (4,1.5) to[out=90,in=270] (3,3);
	\draw[usual,crossline] (3,1.5) to[out=90,in=270] (4,3);
	\draw[pole,crosspole] (2,3) to[out=90,in=270] (2,4.5);
	\draw[usual,crossline] (3,3) to[out=90,in=0] (2.75,3.5) 
	to[out=180,in=0] (.75,3.5) to[out=180,in=270] (.5,3.75);
	\draw[pole,crosspole] (1,3) to[out=90,in=270] (1,4.5) node[above,yshift=-2pt,infinity]{i};
	\draw[usual,crossline,directed=1] (.5,3.75) to[out=90,in=180] (.75,4) 
	to[out=0,in=180] (2.75,4) to[out=0,in=270] (3,4.5) node[above,yshift=-2pt]{1};
	\draw[usual,crossline,directed=1] (4,3) to[out=90,in=270] (4,4.5);
	\node at (1.5,-1.3) {$...$};
	\node at (1.5,4.3) {$...$};
\end{tikzpicture}
$}
\\
\label{eq:braid-typeAC-twist}
\big(\bgen[1]\tgen[i]\bgen[1]^{-1}\big)\tgen[j]
=
\tgen[j]\big(\bgen[1]\tgen[i]\bgen[1]^{-1}\big)
&\leftrightsquigarrow\hspace*{-.1cm}
\scalebox{.85}{$
\begin{tikzpicture}[anchorbase,scale=.5,tinynodes]
	\draw[pole,crosspole] (1,0) node[below,infinity]{i} to[out=90,in=270] (1,1.5);
	\draw[usual,crossline] (3,0) node[below]{1} to[out=90,in=0] (2.75,.5) 
	to[out=180,in=0] (1.75,.5) to[out=180,in=270] (1.5,.75);
	\draw[pole,crosspole] (2,0) node[below,infinity]{j} to[out=90,in=270] (2,1.5);
	\draw[usual,crossline] (1.5,.75) to[out=90,in=180] (1.75,1) 
	to[out=0,in=180] (2.75,1) to[out=0,in=270] (3,1.5);
	\draw[usual,crossline] (4,0) to[out=90,in=270] (4,1.5);
	\draw[pole,crosspole] (1,1.5) to[out=90,in=270] (1,3);
	\draw[pole,crosspole] (2,1.5) to[out=90,in=270] (2,3);
	\draw[usual,crossline] (3,1.5) to[out=90,in=270] (4,3);
	\draw[usual,crossline] (4,1.5) to[out=90,in=270] (3,3);
	\draw[pole,crosspole] (2,3) to[out=90,in=270] (2,4.5);
	\draw[usual,crossline] (3,3) to[out=90,in=0] (2.75,3.5) 
	to[out=180,in=0] (.75,3.5) to[out=180,in=270] (.5,3.75);
	\draw[pole,crosspole] (1,3) to[out=90,in=270] (1,4.5);
	\draw[usual,crossline] (.5,3.75) to[out=90,in=180] (.75,4) 
	to[out=0,in=180] (2.75,4) to[out=0,in=270] (3,4.5);
	\draw[usual,crossline] (4,3) to[out=90,in=270] (4,4.5);
	\draw[pole,crosspole] (1,4.5) to[out=90,in=270] (1,6) node[above,yshift=-2pt,infinity]{i};
	\draw[pole,crosspole] (2,4.5) to[out=90,in=270] (2,6) node[above,yshift=-2pt,infinity]{j};
	\draw[usual,crossline,directed=1] (4,4.5) to[out=90,in=270] (3,6) node[above,yshift=-2pt]{1};
	\draw[usual,crossline,directed=1] (3,4.5) to[out=90,in=270] (4,6);
	\node at (1.5,.2) {$...$};
	\node at (1.5,5.8) {$...$};
	\node at (2.5,.2) {$...$};
	\node at (2.5,5.8) {$...$};
\end{tikzpicture}
$}
=\hspace*{-.1cm}
\scalebox{.85}{$
\begin{tikzpicture}[anchorbase,scale=.5,tinynodes]
	\draw[pole,crosspole] (1,-1.5) node[below,infinity]{i} to[out=90,in=270] (1,0);
	\draw[pole,crosspole] (2,-1.5) node[below,infinity]{j} to[out=90,in=270] (2,0);
	\draw[usual,crossline] (3,-1.5) node[below]{1} to[out=90,in=270] (4,0);
	\draw[usual,crossline] (4,-1.5) to[out=90,in=270] (3,0);
	\draw[pole,crosspole] (2,0) to[out=90,in=270] (2,1.5);
	\draw[usual,crossline] (3,0) to[out=90,in=0] (2.75,.5) 
	to[out=180,in=0] (.75,.5) to[out=180,in=270] (.5,.75);
	\draw[pole,crosspole] (1,0) to[out=90,in=270] (1,1.5);
	\draw[usual,crossline] (.5,.75) to[out=90,in=180] (.75,1) 
	to[out=0,in=180] (2.75,1) to[out=0,in=270] (3,1.5);
	\draw[usual,crossline] (4,0) to[out=90,in=270] (4,1.5);
	\draw[pole,crosspole] (1,1.5) to[out=90,in=270] (1,3);
	\draw[pole,crosspole] (2,1.5) to[out=90,in=270] (2,3);
	\draw[usual,crossline] (4,1.5) to[out=90,in=270] (3,3);
	\draw[usual,crossline] (3,1.5) to[out=90,in=270] (4,3);
	\draw[pole,crosspole] (1,3) to[out=90,in=270] (1,4.5) node[above,yshift=-2pt,infinity]{i};
	\draw[usual,crossline] (3,3) to[out=90,in=0] (2.75,3.5) 
	to[out=180,in=0] (1.75,3.5) to[out=180,in=270] (1.5,3.75);
	\draw[pole,crosspole] (2,3) node[below]{j} to[out=90,in=270] (2,4.5) node[above,yshift=-2pt,infinity]{j};
	\draw[usual,crossline,directed=1] (1.5,3.75) to[out=90,in=180] (1.75,4) 
	to[out=0,in=180] (2.75,4) to[out=0,in=270] (3,4.5) node[above,yshift=-2pt]{1};
	\draw[usual,crossline,directed=1] (4,3) to[out=90,in=270] (4,4.5);
	\node at (1.5,-1.3) {$...$};
	\node at (1.5,4.3) {$...$};
	\node at (2.5,-1.3) {$...$};
	\node at (2.5,4.3) {$...$};
\end{tikzpicture}
$}
\end{align}
The non-trivial statement in \fullref{theorem:alg-braid-group}
is that these relations are sufficient.
\end{example}

\subsection{Markov's theorem}\label{subsec:links-markov}

Let $\braidg[g,\infty]:={\sqcup_{n\in\N[]}}\,\braidg[g,n]$, 
the set of all braids in $\hbody$.

\begin{definition}\label{definition:braid-quotient}
Let $\braidq[g,\infty]:=\braidg[g,\infty]/\ouriso[\sim]$ be the quotient given by 
conjugation \eqref{eq:conjugation} in (each) $\braidg[g,n]$
by elements $\topstuff{s}\in\langle\bgen[1],\dots,\bgen[n{-}1]\rangle$ 
and stabilization \eqref{eq:stabilization}, \ie
\begin{gather}\label{eq:conjugation}
\begin{gathered}
\braid\ouriso[\sim]\topstuff{s}\braid\topstuff{s}^{-1}\\
\text{for }\braid\in\braidg[g,n],\,\topstuff{s}\in\langle\bgen[1],\dots,\bgen[n{-}1]\rangle
\end{gathered}
\quad\leftrightsquigarrow\quad
\begin{tikzpicture}[anchorbase,scale=.5,tinynodes]
	\draw[pole,crosspole] (-1,-.75) to[out=90,in=270] (-1,.5);
	\node at (-.5,.1) {$...$};
	\draw[pole,crosspole] (0,-.75) to[out=90,in=270] (0,.5);
	\draw[usual,crossline] (1,-.75) to[out=90,in=270] (1,.5);
	\node at (1.5,.1) {$...$};
	\draw[usual,crossline] (2,-.75) node[below]{n} to[out=90,in=270] (2,.5);
	\draw[pole,crosspole] (-1,1) to[out=90,in=270] (-1,2.25);
	\node at (-.5,1.6) {$...$};
	\draw[pole,crosspole] (0,1) to[out=90,in=270] (0,2.25);
	\draw[usual,directed=1,crossline] (1,1) to (1,2.25);
	\node at (1.5,1.6) {$...$};
	\draw[usual,directed=1,crossline] (2,1) to (2,2.25) node[above,yshift=-2pt]{n};
	\draw[thin,black,fill=mygray,fill opacity=.35] (-1.25,.5) rectangle (2.25,1);
	\node at (.5,.675) {$\braid$};
\end{tikzpicture}
\ouriso[\sim]
\begin{tikzpicture}[anchorbase,scale=.5,tinynodes]
	\draw[pole,crosspole] (-1,-.75) to[out=90,in=270] (-1,.5);
	\node at (-.5,.1) {$...$};
	\draw[pole,crosspole] (0,-.75) to[out=90,in=270] (0,.5);
	\draw[usual,crossline] (1,-.75) to[out=90,in=270] (1,-.25);
	\draw[usual,crossline] (1,.25) to[out=90,in=270] (1,.5);
	\node at (1.5,-.4) {$...$};
	\draw[usual,crossline] (2,-.75) node[below]{n} to[out=90,in=270] (2,-.25);
	\draw[usual,crossline] (2,.25) to[out=90,in=270] (2,.5);
	\draw[pole,crosspole] (-1,1) to[out=90,in=270] (-1,2.25);
	\node at (-.5,1.6) {$...$};
	\draw[pole,crosspole] (0,1) to[out=90,in=270] (0,2.25);
	\draw[usual,crossline] (1,1) to (1,1.25);
	\draw[usual,crossline,directed=1] (1,1.75) to (1,2.25);
	\node at (1.5,2) {$...$};
	\draw[usual,crossline] (2,1) to (2,1.25);
	\draw[usual,crossline,directed=1] (2,1.75) to (2,2.25) node[above,yshift=-2pt]{n};
	\draw[thin,black,fill=mygray,fill opacity=.35] (-1.25,.5) rectangle (2.25,1);
	\node at (.5,.675) {$\braid$};
	\draw[thin,black,fill=mygray,fill opacity=.35] (.75,1.25) rectangle (2.25,1.75);
	\node at (1.5,1.5) {$\topstuff{s}$};
	\draw[thin,black,fill=mygray,fill opacity=.35] (.75,-.25) rectangle (2.25,.25);
	\node at (1.5,-.077) {$\topstuff{s}^{\scalebox{.5}{-1}}$};
\end{tikzpicture}
\end{gather}
\begin{gather}\label{eq:stabilization}
\begin{gathered}
(\braid[c]\onebraid)\bgen[n](\braid\onebraid)\ouriso[\sim]\braid[c]\braid
\ouriso[\sim](\braid[c]\onebraid)\bgen[n]^{-1}(\braid\onebraid) \\
\text{for }\braid,\braid[c]\in\braidg[g,n],
\end{gathered}
\;\leftrightsquigarrow\;
\begin{tikzpicture}[anchorbase,scale=.5,tinynodes]
	\draw[pole,crosspole] (-1,0) to[out=90,in=270] (-1,.5);
	\draw[pole,crosspole] (0,0) to[out=90,in=270] (0,.5);
	\draw[usual,crossline] (1,0) to[out=90,in=270] (1,.5);
	\draw[usual,crossline] (2,0) node[below]{n} to[out=90,in=270] (2,.5);
	\draw[usual,crossline] (3,0) to[out=90,in=270] (3,1);
	\draw[pole,crosspole] (-1,1) to[out=90,in=270] (-1,2);
	\draw[pole,crosspole] (0,1) to[out=90,in=270] (0,2);
	\draw[usual,crossline] (1,1) to[out=90,in=270] (1,2);
	\draw[usual,crossline] (3,1) to[out=90,in=270] (2,2);
	\draw[usual,crossline] (2,1) to[out=90,in=270] (3,2);
	\draw[pole,crosspole] (-1,2.5) to[out=90,in=270] (-1,3);
	\draw[pole,crosspole] (0,2.5) to[out=90,in=270] (0,3);
	\draw[usual,directed=1,crossline] (1,2.5) to[out=90,in=270] (1,3);
	\draw[usual,directed=1,crossline] (2,2.5) to[out=90,in=270] (2,3) node[above,yshift=-2pt]{n};
	\draw[usual,directed=1,crossline] (3,2) to (3,3);
	\draw[thin,black,fill=mygray,fill opacity=.35] (-1.25,.5) rectangle (2.25,1);
	\draw[thin,black,fill=mygray,fill opacity=.35] (-1.25,2) rectangle (2.25,2.5);
	\node at (.5,.675) {$\braid$};
	\node at (.5,2.25) {$\braid[c]$};
	\node at (-.5,.2) {$...$};
	\node at (-.5,2.8) {$...$};
	\node at (1.5,.2) {$...$};
	\node at (1.5,2.8) {$...$};
\end{tikzpicture}
\ouriso[\sim]
\begin{tikzpicture}[anchorbase,scale=.5,tinynodes]
	\draw[pole,crosspole] (-1,0) to[out=90,in=270] (-1,.5);
	\draw[pole,crosspole] (0,0) to[out=90,in=270] (0,.5);
	\draw[usual,crossline] (1,0) to[out=90,in=270] (1,.5);
	\draw[usual,crossline] (2,0) node[below]{n} to[out=90,in=270] (2,.5);
	\draw[pole,crosspole] (-1,1) to[out=90,in=270] (-1,2);
	\draw[pole,crosspole] (0,1) to[out=90,in=270] (0,2);
	\draw[usual,crossline] (1,1) to[out=90,in=270] (1,2);
	\draw[usual,crossline] (2,1) to[out=90,in=270] (2,2);
	\draw[pole,crosspole] (-1,2.5) to[out=90,in=270] (-1,3);
	\draw[pole,crosspole] (0,2.5) to[out=90,in=270] (0,3);
	\draw[usual,directed=1,crossline] (1,2.5) to[out=90,in=270] (1,3);
	\draw[usual,directed=1,crossline] (2,2.5) to[out=90,in=270] (2,3) node[above,yshift=-2pt]{n};
	\draw[thin,black,fill=mygray,fill opacity=.35] (-1.25,.5) rectangle (2.25,1);
	\draw[thin,black,fill=mygray,fill opacity=.35] (-1.25,2) rectangle (2.25,2.5);
	\node at (.5,.675) {$\braid$};
	\node at (.5,2.25) {$\braid[c]$};
	\node at (-.5,.2) {$...$};
	\node at (-.5,2.8) {$...$};
	\node at (1.5,.2) {$...$};
	\node at (1.5,2.8) {$...$};
\end{tikzpicture}
\ouriso[\sim]
\begin{tikzpicture}[anchorbase,scale=.5,tinynodes]
	\draw[pole,crosspole] (-1,0) to[out=90,in=270] (-1,.5);
	\draw[pole,crosspole] (0,0) to[out=90,in=270] (0,.5);
	\draw[usual,crossline] (1,0) to[out=90,in=270] (1,.5);
	\draw[usual,crossline] (2,0) node[below]{n} to[out=90,in=270] (2,.5);
	\draw[usual,crossline] (3,0) to[out=90,in=270] (3,1);
	\draw[pole,crosspole] (-1,1) to[out=90,in=270] (-1,2);
	\draw[pole,crosspole] (0,1) to[out=90,in=270] (0,2);
	\draw[usual,crossline] (1,1) to[out=90,in=270] (1,2);
	\draw[usual,crossline] (2,1) to[out=90,in=270] (3,2);
	\draw[usual,crossline] (3,1) to[out=90,in=270] (2,2);
	\draw[pole,crosspole] (-1,2.5) to[out=90,in=270] (-1,3);
	\draw[pole,crosspole] (0,2.5) to[out=90,in=270] (0,3);
	\draw[usual,directed=1,crossline] (1,2.5) to[out=90,in=270] (1,3);
	\draw[usual,directed=1,crossline] (2,2.5) to[out=90,in=270] (2,3) node[above,yshift=-2pt]{n};
	\draw[usual,directed=1,crossline] (3,2) to (3,3);
	\draw[thin,black,fill=mygray,fill opacity=.35] (-1.25,.5) rectangle (2.25,1);
	\draw[thin,black,fill=mygray,fill opacity=.35] (-1.25,2) rectangle (2.25,2.5);
	\node at (.5,.675) {$\braid$};
	\node at (.5,2.25) {$\braid[c]$};
	\node at (-.5,.2) {$...$};
	\node at (-.5,2.8) {$...$};
	\node at (1.5,.2) {$...$};
	\node at (1.5,2.8) {$...$};
\end{tikzpicture}
\end{gather}
where $\braid\onebraid\in\braidg[g,n+1]$ is the braid obtained from $\braid\in\braidg[g,n]$
by adding a strand to the right.
\end{definition}

\begin{remark}\label{remark:stuck}
The conjugation \eqref{eq:conjugation} is weaker than in the classical case --
there one can conjugate by any element, instead of just by certain elements. 
This will play an important role in our construction of the HOMFLYPT invariant, see \fullref{proposition:links-get-stuck}. 
On the other hand, 
the stabilization \eqref{eq:stabilization} 
is stronger than the classical case when considered on its own, 
but together with the classical conjugation 
relation is equivalent to the classical stabilization.
\end{remark}

\begin{theoremqed}(Markov's Theorem in a handlebody; \cite[Theorem 5]{HaOlLa-handlebodies}.)\label{theorem:markov}
Let $\braid,\braid[c]\in\braidg[g,\infty]$,
then $\bclosure\ouriso[\sim]\bclosure[c]\subset\hbody$ if and only if 
$\braid=\braid[c]\in\braidq[g,\infty]$.
\end{theoremqed}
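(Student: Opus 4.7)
The plan is to prove both implications separately. For the easier direction, one verifies pictorially that braids related by the moves in \fullref{definition:braid-quotient} have isotopic closures. For the conjugation \eqref{eq:conjugation}, since $\topstuff{s}\in\langle\bgen[1],\dots,\bgen[n-1]\rangle$ involves no twist generators, neither $\topstuff{s}$ nor $\topstuff{s}^{-1}$ encircles a core strand; hence after closure they can be slid around the outside closure arcs and cancelled without crossing any core strand. The restriction on $\topstuff{s}$ is essential, since conjugation by a twist generator would require pushing a strand through a core strand of $\hbody^c$, which is forbidden (compare \eqref{eq:not-equal} and \fullref{remark:stuck}). The stabilization relations in \eqref{eq:stabilization} amount to Reidemeister I-type isotopies inside $\hbody$.

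For the harder direction, I would adapt the classical proof of Markov's theorem. Given an ambient isotopy between $\bclosure$ and $\bclosure[c]$ in $\hbody$ -- equivalently, an ambient isotopy in $\thesphere$ fixing $\hbody^c$ pointwise -- one places the isotopy in general position with respect to a height function pulled back from the product structure $\thedisk_{g}\times[0,1]$. A standard transversality argument then decomposes the isotopy into a finite sequence of elementary local moves: braid-isotopy moves (which contribute equalities in $\braidg[g,n]$), crossings passing through the closure arcs (contributing conjugations), and pair-creation or pair-annihilation of critical points near the closure (contributing stabilizations). The aim is to realize each such elementary move by the relations of \fullref{definition:braid-quotient}.

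The main obstacle will be the restricted form of conjugation in \eqref{eq:conjugation}: classically, one may conjugate by any braid element, whereas here only elements of $\langle\bgen[1],\dots,\bgen[n-1]\rangle$ are allowed. In the handlebody setting, a strand slid around the closure may, when recorded as a braid word, appear to be conjugated by a twist generator, which is not \emph{a priori} permitted. To resolve this, one exploits the enhanced two-sided, signed stabilization \eqref{eq:stabilization}: unlike the classical Markov theorem (with its single stabilization by $\bgen[n]$), the handlebody version has four stabilization moves, providing enough flexibility to rewrite a would-be twist-conjugation as a composition of permitted conjugations together with stabilization-destabilization pairs. Matching every elementary isotopy move to such a combination, and verifying that the assembled moves yield an equality in $\braidq[g,\infty]$, constitutes the technical heart of the argument, carried out in detail in \cite[Theorem 5]{HaOlLa-handlebodies}.
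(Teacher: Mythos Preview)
The paper does not prove this statement: it is stated in a \texttt{theoremqed} environment, which simply appends a $\square$ and cites \cite[Theorem~5]{HaOlLa-handlebodies} as the source. There is therefore no proof in the paper to compare your proposal against.

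Your sketch is a reasonable outline of the argument in the cited reference, and you correctly identify both the easy direction and the key subtlety in the hard direction (that only restricted conjugation is available, compensated by the strengthened stabilization). Since the paper itself defers entirely to \cite{HaOlLa-handlebodies}, your proposal already goes beyond what the paper provides; the appropriate comparison would be with that reference rather than with the present paper.
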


\begin{remark}
Although it may appear that \fullref{definition:braid-quotient}
omits conjugation by certain elements that clearly give isotopic closures
\eg by the ``maximal loop'' $\omega=\tgen[g]\dots\tgen[1]$ or its inverse, 
\cite[Section 5]{HaOlLa-handlebodies} shows how 
conjugation by such elements
can be described in terms of the above Markov moves.
\end{remark}

\subsection{From handlebody braids to classical braids}\label{subsec:handlebody-to-usual}

Recall from \fullref{subsection:intro-digression} that one of our main ingredients 
in constructing homological invariants of links in $\hbody$ for all $g\geq 0$ 
is the relation between $\braidg[g,n]$ and (a colored variant of) the type $\typeA$ braid group $\braidg[g+n]$: 
We have a group homomorphism $\braidg[g,n]\to\braidg[0,g+n]$ given by 
viewing the core strands as ``usual'' strands, \eg
\begin{gather}\label{eq:into-typeA}
\begin{tikzpicture}[anchorbase,scale=.5,tinynodes]
	\draw[pole,crosspole] (2,0) node[below,infinity]{g} to[out=90,in=270] (2,1.5) node[above,yshift=-2pt,infinity]{g};
	\draw[usual,crossline] (3,0) node[below]{1} 
	to[out=90,in=0] (2.75,.5) to[out=180,in=0] (.75,.5) to[out=180,in=270] (.5,.75);
	\draw[pole,crosspole] (1,0) node[below,infinity]{i} to[out=90,in=270] (1,1.5) node[above,yshift=-2pt,infinity]{i};
	\draw[usual,crossline,directed=1] (.5,.75) to[out=90,in=180] (.75,1) 
	to[out=0,in=180] (2.75,1) to[out=0,in=270] (3,1.5) node[above,yshift=-2pt]{1};
	\node at (1.5,.1) {$...$};
	\node at (1.5,1.5) {$...$};
\end{tikzpicture}
\mapsto
\begin{tikzpicture}[anchorbase,scale=.5,tinynodes]
	\draw[usual,crossline,directed=1] (2,0) node[below]{g} 
	to[out=90,in=270] (2,1.5) node[above,yshift=-2pt]{g};
	\draw[usual,crossline] (3,0) node[below]{g\tp 1} to[out=90,in=0] (2.75,.5) 
	to[out=180,in=0] (.75,.5) to[out=180,in=270] (.5,.75);
	\draw[usual,crossline,directed=1] (1,0) node[below]{i} 
	to[out=90,in=270] (1,1.5) node[above,yshift=-2pt]{i};
	\draw[usual,crossline,directed=1] (.5,.75) to[out=90,in=180] (.75,1) 
	to[out=0,in=180] (2.75,1) to[out=0,in=270] (3,1.5) node[above,yshift=-2pt]{g\tp 1};
	\node at (1.5,.1) {$...$};
	\node at (1.5,1.5) {$...$};
\end{tikzpicture}
\end{gather}
As discussed above, this map is clearly injective, 
hence we have:

\begin{proposition}\label{proposition:artintits-a-higher-g}
The map induced by \eqref{eq:into-typeA} gives rise to an embedding of groups
\begin{equation}
\pushQED{\qed}
\braidg[g,n]\hookrightarrow\braidg[g+n].
\qedhere
\popQED
\end{equation}
\end{proposition}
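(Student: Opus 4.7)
The plan is to identify the homomorphism induced by \eqref{eq:into-typeA} with the inclusion afforded by the kernel description of $\braidg[g,n]$ recalled in \fullref{subsec:links-alexander}: namely, $\braidg[g,n]$ is identified with the kernel of the ``forget the last $n$ strands'' homomorphism from the subgroup of $\braidg[g+n]$ consisting of braids that are pure on the first $g$ strands, onto $\braidg[g]$. As this is an honest subgroup inclusion, once such an identification is in place, injectivity of \eqref{eq:into-typeA} will follow immediately.

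First, I would check that the assignment in \eqref{eq:into-typeA} extends to a well-defined group homomorphism $\braidg[g,n]\to\braidg[g+n]$. By \fullref{definition:alg-braid-group} and \fullref{theorem:alg-braid-group}, it suffices to check that the images of the generators $\bgen[i]$ and $\tgen[j]$ satisfy the relations \eqref{eq:braid-rels-typeA}--\eqref{eq:braid-rels-special}. This is a diagrammatic exercise with classical braids in $\braidg[g+n]$: the relations in \eqref{eq:braid-rels-typeA} and \eqref{eq:braid-rels-typeC} become ordinary type $\typeA$ braid relations and planar isotopies among the last $n$ strands (together with trivial interactions with the first $g$), while the four-term relation \eqref{eq:braid-rels-special} reduces to a sequence of Reidemeister II/III moves, as can be verified directly on the displayed diagram \eqref{eq:braid-typeAC-twist} once the core strands are un-colored.

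Second, I would observe that the resulting homomorphism coincides with the topological inclusion of $\braidg[g,n]$ into $\braidg[g+n]$ described above. Topologically, \eqref{eq:into-typeA} is induced by the inclusion of unordered configuration spaces $\mathrm{Conf}_n(\thedisk_g)\hookrightarrow\mathrm{Conf}_{g+n}(\thedisk)$ obtained by adjoining the $g$ puncture points of $\thedisk_g$ as additional distinguished and fixed configuration points. The Fadell--Neuwirth fibration associated to the forgetful map deleting the last $n$ points yields a short exact sequence of braid groups whose kernel is exactly $\braidg[g,n]$, and the map on fundamental groups induced by the fiber inclusion is precisely the composition we want. Matching the topological description with the algebraic one on generators is then routine: $\bgen[i]$ is sent to the standard crossing between strands $g+i$ and $g+i+1$, and $\tgen[j]$ is sent to the pure-braid ``meridian'' that winds strand $g+1$ once around strand $j$.

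The main, but modest, obstacle will be the bookkeeping in the first step, particularly for the four-term relation \eqref{eq:braid-rels-special}; once that is discharged, the conclusion is purely formal, and one could alternatively bypass the Fadell--Neuwirth argument entirely by noting that an inverse on the level of the kernel subgroup can be constructed directly by reading off the handlebody braid from a classical braid of the required form (pure on the first $g$ strands, trivial on the first $g$ strands after forgetting the rest).
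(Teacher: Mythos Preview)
Your proposal is correct and follows essentially the same approach as the paper: the paper simply notes, immediately before the proposition, that ``this map is clearly injective'' by appealing to the kernel description of $\braidg[g,n]$ given in \fullref{subsec:links-alexander}, which is exactly the identification you spell out. Your write-up is more detailed than the paper's one-line justification (you verify the relations explicitly and invoke the Fadell--Neuwirth fibration), but the underlying argument is the same.
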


However,
\fullref{proposition:artintits-a-higher-g} 
is only one ingredient in our construction, since
invariance under the procedure
\begin{gather}\label{eq:classical-conjugation}
\begin{tikzpicture}[anchorbase,scale=.5,tinynodes]
	\draw[pole,crosspole] (-1,-.75) to[out=90,in=270] (-1,.5);
	\node at (-.5,.1) {$...$};
	\draw[pole,crosspole] (0,-.75) to[out=90,in=270] (0,.5);
	\draw[usual,crossline] (1,-.75) to[out=90,in=270] (1,.5);
	\node at (1.5,.1) {$...$};
	\draw[usual,crossline] (2,-.75) node[below]{n} to[out=90,in=270] (2,.5);
	\draw[pole,crosspole] (-1,1) to[out=90,in=270] (-1,2.25);
	\node at (-.5,1.6) {$...$};
	\draw[pole,crosspole] (0,1) to[out=90,in=270] (0,2.25);
	\draw[usual,directed=1,crossline] (1,1) to (1,2.25);
	\node at (1.5,1.6) {$...$};
	\draw[usual,directed=1,crossline] (2,1) to (2,2.25) node[above,yshift=-2pt]{n};
	\draw[thin,black,fill=mygray,fill opacity=.35] (-1.25,.5) rectangle (2.25,1);
	\node at (.5,.675) {$\braid$};
\end{tikzpicture}
\mapsto
\begin{tikzpicture}[anchorbase,scale=.5,tinynodes]
	\draw[usual,crossline] (-1,-.75) to[out=90,in=270] (-1,.5);
	\node at (-.5,.1) {$...$};
	\draw[usual,crossline] (0,-.75) to[out=90,in=270] (0,.5);
	\draw[usual,crossline] (1,-.75) to[out=90,in=270] (1,.5);
	\node at (1.5,.1) {$...$};
	\draw[usual,crossline] (2,-.75) node[below]{g\tp n} to[out=90,in=270] (2,.5);
	\draw[usual,crossline,directed=1] (-1,1) to[out=90,in=270] (-1,2.25);
	\node at (-.5,1.6) {$...$};
	\draw[usual,crossline,directed=1] (0,1) to[out=90,in=270] (0,2.25);
	\draw[usual,crossline,directed=1] (1,1) to (1,2.25);
	\node at (1.5,1.6) {$...$};
	\draw[usual,crossline,directed=1] (2,1) to (2,2.25) node[above,yshift=-2pt]{g\tp n};
	\draw[thin,black,fill=mygray,fill opacity=.35] (-1.25,.5) rectangle (2.25,1);
	\node at (.5,.675) {$\braid$};
\end{tikzpicture}
\hspace*{-.2cm}
\ouriso[\sim]
\begin{tikzpicture}[anchorbase,scale=.5,tinynodes]
	\draw[usual,crossline] (-1,-.75) to[out=90,in=270] (-1,-.25);
	\draw[usual,crossline] (-1,.25) to[out=90,in=270] (-1,.5);
	\node at (-.5,-.4) {$...$};
	\draw[usual,crossline] (0,-.75) to[out=90,in=270] (0,-.25);
	\draw[usual,crossline] (0,.25) to[out=90,in=270] (0,.5);
	\draw[usual,crossline] (1,-.75) to[out=90,in=270] (1,-.25);
	\draw[usual,crossline] (1,.25) to[out=90,in=270] (1,.5);
	\node at (1.5,-.4) {$...$};
	\draw[usual,crossline] (2,-.75) node[below]{g\tp n} to[out=90,in=270] (2,-.25);
	\draw[usual,crossline] (2,.25) to[out=90,in=270] (2,.5);
	\draw[usual,crossline] (-1,1) to[out=90,in=270] (-1,1.25);
	\draw[usual,crossline,directed=1] (-1,1.75) to[out=90,in=270] (-1,2.25);
	\node at (-.5,2) {$...$};
	\draw[usual,crossline] (0,1) to[out=90,in=270] (0,1.25);
	\draw[usual,crossline,directed=1] (0,1.75) to[out=90,in=270] (0,2.25);
	\draw[usual,crossline] (1,1) to (1,1.25);
	\draw[usual,crossline,directed=1] (1,1.75) to (1,2.25);
	\node at (1.5,2) {$...$};
	\draw[usual,crossline] (2,1) to (2,1.25);
	\draw[usual,crossline,directed=1] (2,1.75) to (2,2.25) node[above,yshift=-2pt]{g\tp n};
	\draw[thin,black,fill=mygray,fill opacity=.35] (-1.25,.5) rectangle (2.25,1);
	\node at (.5,.675) {$\braid$};
	\draw[thin,black,fill=mygray,fill opacity=.35] (-1.25,1.25) rectangle (2.25,1.75);
	\node at (.5,1.5) {$\topstuff{s}$};
	\draw[thin,black,fill=mygray,fill opacity=.35] (-1.25,-.25) rectangle (2.25,.25);
	\node at (.5,-.078) {$\topstuff{s}^{\scalebox{.5}{-1}}$};
\end{tikzpicture}
\hspace*{-.2cm}
\mapsfrom
\begin{tikzpicture}[anchorbase,scale=.5,tinynodes]
	\draw[pole,crosspole] (-1,-.75) to[out=90,in=270] (-1,-.25);
	\draw[pole,crosspole] (-1,.25) to[out=90,in=270] (-1,.5);
	\node at (-.5,-.4) {$...$};
	\draw[pole,crosspole] (0,-.75) to[out=90,in=270] (0,-.25);
	\draw[pole,crosspole] (0,.25) to[out=90,in=270] (0,.5);
	\draw[usual,crossline] (1,-.75) to[out=90,in=270] (1,-.25);
	\draw[usual,crossline] (1,.25) to[out=90,in=270] (1,.5);
	\node at (1.5,-.4) {$...$};
	\draw[usual,crossline] (2,-.75) node[below]{n} to[out=90,in=270] (2,-.25);
	\draw[usual,crossline] (2,.25) to[out=90,in=270] (2,.5);
	\draw[pole,crosspole] (-1,1) to[out=90,in=270] (-1,1.25);
	\draw[pole,crosspole] (-1,1.75) to[out=90,in=270] (-1,2.25);
	\node at (-.5,2) {$...$};
	\draw[pole,crosspole] (0,1) to[out=90,in=270] (0,1.25);
	\draw[pole,crosspole] (0,1.75) to[out=90,in=270] (0,2.25);
	\draw[usual,crossline] (1,1) to (1,1.25);
	\draw[usual,crossline,directed=1] (1,1.75) to (1,2.25);
	\node at (1.5,2) {$...$};
	\draw[usual,crossline] (2,1) to (2,1.25);
	\draw[usual,crossline,directed=1] (2,1.75) to (2,2.25) node[above,yshift=-2pt]{n};
	\draw[thin,black,fill=mygray,fill opacity=.35] (-1.25,.5) rectangle (2.25,1);
	\node at (.5,.675) {$\braid$};
	\draw[thin,black,fill=mygray,fill opacity=.35] (-1.25,1.25) rectangle (2.25,1.75);
	\node at (.5,1.5) {$\topstuff{s}$};
	\draw[thin,black,fill=mygray,fill opacity=.35] (-1.25,-.25) rectangle (2.25,.25);
	\node at (.5,-.078) {$\topstuff{s}^{\scalebox{.5}{-1}}$};
\end{tikzpicture}
\end{gather}
(i.e. under ``conjugation in $\braidg[g+n]$'') is not desirable 
for an invariant of $\link\ouriso[\sim]\bclosure[\braid]\subset\hbody$, \cf \fullref{remark:stuck}.
As such, we will use the theory of singular Soergel bimodules 
to mimic the merging and splitting of the core strands in 
the closure procedure for $\braidg[g,n]$, which will lead to 
invariants of $\bclosure[\braid]$ that are not invariant under \eqref{eq:classical-conjugation}.

\section{Braids in handlebodies and singular type \texorpdfstring{$\typeA$}{A} Soergel bimodules}\label{section:genusg}

In the present section, 
we construct a map from
$\braidg[g,n]$ 
to the $2$-category of
singular Soergel bimodules.

\subsection{Parabolic subgroups and Frobenius extensions}\label{subsec:singular-pre-primer}

Fix $N\in\N[\geq 1]$
and let $\rring:=\rring_{N}:=\K[\elstuff{x}_{1},\dots,\elstuff{x}_{N}]$ 
be the $\qpar$-graded polynomial ring with $\qdeg(x_i)=2$ for all $i$
(by convention, $\rring_{0}:=\K$).
The symmetric group
$\symgroup[N]=\coxgroup[{\typeA[N{-}1]}]$
acts on $\rring$ via 
\begin{gather}\label{eq:typeA-rep}
\coxgen\acts
\elstuff{x}_{j}=
\begin{cases}
\elstuff{x}_{i{+}1} &\text{if }i=j,
\\
\elstuff{x}_{i} &\text{if }i=j+1,
\\
\elstuff{x}_{j} &\text{else}.
\end{cases}
\end{gather}

\begin{remark}\label{remark:typeA-rep}
Recall that Tits defined a faithful representation of 
any Coxeter group $\coxgroup$ on a real vector space of dimension $|\setstuff{V}|$, 
commonly called the reflection representation of $\coxgroup$. 
(Recall our notation from \fullref{subsection:intro-digression}.)
This representation is a crucial ingredient in the original definition 
of the associated category of Soergel bimodules, see \cite[Section 1.4]{So-hcbim}. 
In our case, this is the standard (irreducible) representation 
of $\symgroup[N]$ of dimension $N-1$.
By contrast, the representation given by \eqref{eq:typeA-rep} is built from
the $N$-dimensional permutation representation, 
which decomposes as a direct sum of the standard representation and the trivial representation.
By \eg \cite[Section 4.6]{ElKh-diagrams-soergel} 
and \cite[Theorem 2.7 and Proposition 2.10]{ElLo-modular-rep-typea}, 
the difference (akin to the difference 
between considering $\mathfrak{gl}_{N}$ rather than $\mathfrak{sl}_{N}$) 
will not play a role in the present work, in the sense that all results
from the cited literature hold in this case as well.
\end{remark}

Fix any tuple $\stuple=(k_{1},\dots,k_{r})\in\N[\geq 1]^{r}$ with $k_{1}+\dots+k_{r}=N$. 
(Note further that choosing $\stuple$ also determines $N$ since $N=k_{1}+\dots+k_{r}$. 
We will tacitly use this throughout.)
By definition, the corresponding parabolic subgroup is
\begin{gather}\label{eq:parabolic}
\psymgroup{N}{\stuple}:=
\symgroup[k_{1}]\times\dots\times\symgroup[k_{r}]
\subset\symgroup[N].
\end{gather}
Since there is a bijection between tuples and parabolic subgroups, 
we will implicitly identify them,
\eg $\stuple[I]\subset\stuple[J]$ denotes an inclusion of parabolic subgroups.

Given a parabolic subgroup $\stuple$, 
we let $\rring[\stuple]:=\rring[\psymgroup{N}{\stuple}]$ be the ring of invariants.
This ring is $\qpar$-graded, since the action in \eqref{eq:typeA-rep} 
is $\qpar$-homogeneous.

\begin{example}\label{example:parabolic2}
The parabolic subgroups in \eqref{eq:parabolic} can alternatively be defined
by choosing corresponding subsets of the 
vertices $\setstuff{V}=\{\elstuff{1},\dots,\elstuff{N{-}1}\}$ of the type $\typeA[N{-}1]$ 
Coxeter diagram (with the left-right order of the vertices). For type $\typeA[3]$ one gets 
\begin{gather}
\begin{aligned}
(1,1,1,1)&\leftrightsquigarrow\emptyset,
\\
(2,1,1)&\leftrightsquigarrow\{\elstuff{1}\},
\end{aligned}
\quad\quad
\begin{aligned}
(1,2,1)&\leftrightsquigarrow\{\elstuff{2}\},
\\
(1,1,2)&\leftrightsquigarrow\{\elstuff{3}\},
\end{aligned}
\quad\quad
\begin{aligned}
(3,1)&\leftrightsquigarrow\{\elstuff{1},\elstuff{2}\},
\\
(2,2)&\leftrightsquigarrow\{\elstuff{1},\elstuff{3}\},
\end{aligned}
\quad\quad
\begin{aligned}
(1,3)&\leftrightsquigarrow\{\elstuff{2},\elstuff{3}\},
\\
(4)&\leftrightsquigarrow\{\elstuff{1},\elstuff{2},\elstuff{3}\}.
\end{aligned}
\end{gather}
Above we have listed all choices of tuples and the associated parabolic subgroups. 
Thus, $\rring[(1,1,1,1)]=\rring[\emptyset]$ is $\rring$ itself, while 
$\rring[(4)]=\rring[\{\elstuff{1},\elstuff{2},\elstuff{3}\}]$ is the $\K$-algebra 
of symmetric polynomials in four variables.
\end{example}

For the duration, we will use the following ordering convention for parabolic subgroups
$\stuple,\stuple[J],\stuple[K],\stuple[L]$ and their rings of invariants:
\begin{gather}\label{eq:ParabolicConv}
\begin{tikzcd}[column sep=.1em,row sep=.01em,ampersand replacement=\&]
\phantom{.} \& \fcolorbox{mygray}{white}{$\stuple[J]\phantom{\rring[{\stuple[J]}]}\hspace*{-.45cm}$} \arrow[rd,specialsymbol=\subset] \& \phantom{.} \\
\fcolorbox{mygray}{white}{$\stuple\phantom{\rring[{\stuple[J]}]}\hspace*{-.45cm}$} \arrow[ru,specialsymbol=\subset]\arrow[rd,specialsymbol=\subset] \& \phantom{.} \& 
\fcolorbox{mygray}{white}{$\stuple[L]\phantom{\rring[{\stuple[J]}]}\hspace*{-.45cm}$} \\
\phantom{.} \& \fcolorbox{mygray}{white}{$\stuple[K]\phantom{\rring[{\stuple[J]}]}\hspace*{-.45cm}$} \arrow[ru,specialsymbol=\subset] \& \phantom{.} \\
\end{tikzcd}
\quad
\raisebox{-.1cm}{$\leftrightsquigarrow$}
\quad
\begin{tikzcd}[column sep=.1em,row sep=.01em,ampersand replacement=\&]
\phantom{.} \& \fcolorbox{mygray}{white}{$\rring[{\stuple[J]}]$} \arrow[rd,specialsymbol=\supset] \& \phantom{.} \\
\fcolorbox{mygray}{white}{$\rring[{\stuple}]$} \arrow[ru,specialsymbol=\supset]\arrow[rd,specialsymbol=\supset] \& \phantom{.} \& \fcolorbox{mygray}{white}{$\rring[{\stuple[L]}]$} \\
\phantom{.} \& \fcolorbox{mygray}{white}{$\rring[{\stuple[K]}]$} \arrow[ru,specialsymbol=\supset] \& \phantom{.} \\
\end{tikzcd}\raisebox{-.1cm}{.}
\end{gather}
The $\qpar$-degree $0$ inclusion
\begin{gather}\label{eq:singular-iota}
\iota_{\stuple}^{\stuple[J]}\colon\rring[{\stuple[J]}]\hookrightarrow\rring[{\stuple}]
\end{gather}
of $\K$-algebras is a $\qpar$-graded Frobenius extension (see \cite{ElSnWi-frob-extensions}), 
meaning that $\rring[{\stuple}]$ is a $\qpar$-graded, free $\rring[{\stuple[J]}]$-module of finite rank, 
possessing a non-degenerate, $\rring[{\stuple[J]}]$-linear trace map $\rring[{\stuple}]\to\rring[{\stuple[J]}]$.
In the present case, the latter is built using
the Demazure operators 
$\partial_{\coxgen}\colon\rring\to\rring[\{\elstuff{i}\}]\subset\rring$, given 
by $\partial_{\coxgen}(f):=(f-\coxgen\acts f)/\rooty$ 
for the roots $\rooty=\elstuff{x}_{i}-\elstuff{x}_{i{+}1}$.
The collection $\{\partial_{\coxgen}\}$
satisfies the classical braid relations, and 
thus gives a well-defined map $\partial_{w}$ 
associated to any $w\in\symgroup[N]$ 
using a reduced expression for $w$.
Using these, the aforementioned trace map is
given by
\begin{gather}\label{eq:singular-demazure}
\partial_{\stuple}^{\stuple[J]}\colon\rring[{\stuple}]\to\rring[{\stuple[J]}],\;
f\mapsto\partial_{w_{\stuple}w_{\stuple[J]}^{-1}}(f)
\end{gather}
and is of $\qpar$-degree $2\ell(\stuple)-2\ell(\stuple[J])$.
Here $w_{\stuple}$ is the longest element in $\psymgroup{N}{\stuple}$, and
$\ell(\stuple)$ denotes its length.

The Frobenius extension data allows for the definition of maps 
between certain $\rring[{\stuple}]$-bimodules, 
that will serve as important morphisms between singular Soergel bimodules 
(we recall the definition of the latter below).
To wit, given a basis $\{a_{i}\}$ for $\rring[{\stuple}]$ over $\rring[{\stuple[J]}]$, 
we can find a dual basis $\{a_{i}^{\star}\}$ satisfying 
$\partial_{\stuple}^{\stuple[J]}(a_{i}a_{j}^{\star})=\delta_{ij}$.
Given this, we obtain the Frobenius element 
$\frobel:={\textstyle\sum_{i}}\;a_{i}\otimes a_{i}^{\star}$, 
which is of $\qpar$-degree $2\ell(\stuple[J])-2\ell(\stuple)$ 
and independent of the choice of $\{a_{i}\}$. 
This gives multiplication and comultiplication maps
\begin{gather}\label{eq:singular-dot-maps}
\begin{aligned}
\dotdown{\stuple}{\stuple[J]}&\colon
\rring[{\stuple}]\otimes_{\rring[{\stuple[J]}]}\rring[{\stuple}]\to\rring[{\stuple}],
\;f\otimes g\mapsto fg,
\quad\text{$\qpar$-degree $0$},
\\
\dotup{\stuple}{\stuple[J]}&\colon
\rring[{\stuple}]\to
\rring[{\stuple}]\otimes_{\rring[{\stuple[J]}]}\rring[{\stuple}],
\; f\mapsto f\frobel,
\quad\text{$\qpar$-degree $2(\ell(\stuple[J])-\ell(\stuple))$}.
\end{aligned}
\end{gather}
These morphisms of bimodules are unital and counital with respect to 
$\iota_{\stuple}^{\stuple[J]}$ and $\partial_{\stuple}^{\stuple[J]}$, respectively.

\begin{example}\label{example:graded-frobenius}
For $\stuple=\emptyset$ and $\stuple[J]=\{\elstuff{i}\}$, 
we have $w_{\stuple}=1$ and $w_{\stuple[J]}=\coxgen$. 
It follows that
$\{1,\tfrac{1}{2}\rooty\}$ and $\{1^{\star}=\tfrac{1}{2}\rooty,(\tfrac{1}{2}\rooty)^{\star}=1\}$ 
are dual bases for $\rring$ as an $\rring[\{\elstuff{i}\}]$-module,  
and $\frobel=\tfrac{1}{2}(1\otimes\rooty+\rooty\otimes 1)$.
\end{example}

Finally, for later use, let us explicitly identify the rings $\rring[{\stuple}]$ 
for all $\stuple=(k_{1},\dots,k_{r})$. 
To this end, we consider $r$ alphabets $\ualph[i]$ (we tend to omit the alphabets 
if no confusion can arise) 
with $k_{i}$ variables, and write $\otimes_{\K}=\otimes$. 
A classical result about symmetric functions gives that
\begin{gather}\label{eq:el-sym}
\rring[{\stuple}]
\ouriso
\K\big[\efunc_{1}(\ualph[1]),\dots,\efunc_{k_{1}}(\ualph[1])\big]
\otimes
\dots
\otimes
\K\big[\efunc_{1}(\ualph[r]),\dots,\efunc_{k_{r}}(\ualph[r])\big],
\end{gather}
where $\efunc_{j}(\ualph[i])$ denotes the $j^{\mathrm{th}}$ elementary 
symmetric function in the variables $\ualph[i]$. Note that 
$\qdeg(\efunc_{j})=2j$. In particular,
\begin{gather}
\qdim(\rring[{\stuple}])
={%\textstyle
\prod_{j=1}^{r}} \;
{%\textstyle
\prod_{i=1}^{k_{j}}}\,
\tfrac{1}{1-\qpar^{2i}}.
\end{gather}

\subsection{A reminder on type \texorpdfstring{$\typeA$}{A} singular Soergel bimodules}\label{subsec:singular-primer}

We now briefly recall the category of singular
Soergel bimodules $\sSbim[N]=\sSbim[{\typeA[N{-}1]}]$ 
of type $\typeA[N{-}1]$, which categorifies the 
Hecke/Schur algebroid of type $\typeA$ 
\cite[Theorem 1.2]{Wi-sing-soergel} in characteristic $0$.
Details (in more generality) can be found 
\eg in \cite{Wi-sing-soergel}, or \cite{ElLo-modular-rep-typea} and 
\cite{ElSnWi-frob-extensions} for the underlying diagrammatic calculus.

Define the merge (``restriction'') and split (``induction'') bimodules as follows:
\begin{gather}\label{eq:indres}
\pmerge{\stuple}{\stuple[J]}
:=\qpar^{\ell(\stuple){-}\ell(\stuple[J])}\rring[{\stuple[J]}]\otimes_{\rring[{\stuple[J]}]}\rring[{\stuple}],
\quad\quad
\psplit{\stuple[J]}{\stuple}
:=\rring[{\stuple}]\otimes_{\rring[{\stuple[J]}]}\rring[{\stuple[J]}],
\end{gather}
where we follow the conventions from \eqref{eq:ParabolicConv}.
Here, we have indicated the left/right actions using left/right subscripts,
a convention that we will use throughout. 
There is a (horizontal) composition of such bimodules given by
tensoring over the common (``middle'') ring, 
which we denote \eg by $\pmerge{}{\stuple[L]}\pmerge{\stuple}{\stuple[J]}
=\pmerge{\stuple[J]}{\stuple[L]}\otimes_{\rring[{\stuple[J]}]}\pmerge{\stuple}{\stuple[J]}$.
In particular, we have the following 
$\qpar$-degree $0$ bimodule isomorphisms that we implicitly use below:
\begin{gather}\label{eq:asso-maps}
\pmerge{}{\stuple[L]}\pmerge{\stuple}{\stuple[J]}
\ouriso
\pmerge{\stuple}{\stuple[L]}
\ouriso
\pmerge{}{\stuple[L]}\pmerge{\stuple}{\stuple[K]},
\quad\quad
\psplit{}{\stuple}\psplit{\stuple[L]}{\stuple[J]}
\ouriso
\psplit{\stuple[L]}{\stuple}
\ouriso
\psplit{}{\stuple}\psplit{\stuple[L]}{\stuple[K]}.
\end{gather}
All of the isomorphism in \eqref{eq:asso-maps} are essentially identities, 
as the careful reader is invited to check.
(Note \eg that 
$f\otimes g \otimes h= 1\otimes 1\otimes fgh
\in\pmerge{}{\stuple[L]}\pmerge{\stuple}{\stuple[J]}$.)

\begin{definition}\label{definition:singsoergel-and-bs}
Let $\sSbim[N]$ be 
the $\K$-linear, $\qpar$-graded $2$-category 
given as the additive Karoubi $2$-closure 
(meaning taking direct sums and summands) of the 
$2$-category where objects are parabolic subgroups $\stuple\subset\symgroup[N]$,
$1$-morphisms are generated 
by $\qpar$-shifts of
\begin{gather}\label{eq:SBimGen}
\rring[{\stuple}]\colon\stuple\to\stuple,\quad
\pmerge{\stuple}{\stuple[J]}\colon\stuple\to\stuple[J],
\quad\text{and}\quad
\psplit{\stuple[J]}{\stuple}\colon\stuple[J]\to\stuple
\end{gather} 
for $\stuple[I]\subset\stuple[J]$, 
and $2$-morphisms are (all) bimodule maps of $\qpar$-degree $0$.
\end{definition}

\begin{example}\label{example:usual-soergel}
We have %$\rring=\rring[\emptyset]$ and 
$\qpar^{\tm 1}\rring\otimes_{\rring[\{\elstuff{i}\}]}\rring
\ouriso\psplit{2}{1,1}\pmerge{1,1}{}$, 
which the reader familiar with 
(usual) Soergel bimodules of type $\typeA$ (see \eg \cite{ElWi-hodge-sbim}) might recognize 
as being so-called Bott--Samelson bimodules. 
In particular, Soergel bimodules of type $\typeA[{N{-}1}]$ can be identified with the 
$2$-category $\twoEnd_{\sSbim[N]}(\emptyset)$, which has just one object
(hence is a monoidal category).
\end{example}

\subsection{Web diagrammatics}\label{subsec:singular-diagrams}

Following \eg ideas in \cite[Section 3]{MaStVa-colored-homfly},
the generating $1$-morphisms from \eqref{eq:SBimGen}, and compositions thereof, 
admit a description in terms of an MOY-type calculus, 
which we now sketch.

The basic building blocks are the identity, merge, and split 
bimodules, 
which are depicted using the following (local) graphical notation:
\begin{gather}
\begin{tikzpicture}[anchorbase,scale=.5,tinynodes]
	\draw[usualbs] (1,0) node[below]{$k_{i}$}
	 to[out=90,in=270] (1,1) node[above,yshift=-2pt]{$k_{i}$};
\end{tikzpicture}
\leftrightsquigarrow
\rring[k_{i}],
\quad\quad
\begin{tikzpicture}[anchorbase,scale=.5,tinynodes]
	\draw[usualbs] (0,0) node[below]{$k$} to[out=90,in=180] (.5,.5);
	\draw[usualbs] (1,0) node[below]{$l$} to[out=90,in=0] (.5,.5);
	\draw[usualbs] (.5,.5) to[out=90,in=270] (.5,1) node[above,yshift=-2pt]{$k\tp l$};
\end{tikzpicture}
\leftrightsquigarrow
\pmerge{k,l}{k{+}l},
\quad\quad
\begin{tikzpicture}[anchorbase,scale=.5,tinynodes]
	\draw[usualbs] (.5,.5) to[out=180,in=270] (0,1) node[above,yshift=-2pt]{$k$};
	\draw[usualbs] (.5,.5) to[out=0,in=270] (1,1) node[above,yshift=-2pt]{$l$};
	\draw[usualbs] (.5,0) node[below]{$k\tp l$} to[out=90,in=270] (.5,.5);
\end{tikzpicture}
\leftrightsquigarrow
\psplit{k{+}l}{k,l}.
\end{gather}
Here, and for the duration, 
we use the abbreviation $\rring[k]$ for the ring associated to $\stuple=(k)$.

Recall that, by \fullref{convention:diagram-conventions}, 
vertical concatenation of such pictures corresponds to composition of $1$-morphisms, 
and \eg composing on the left corresponds to stacking on the top.
Moreover, we can place such diagrams side-by-side, 
which corresponds to taking the tensor product over $\K$.
Hence, we can associate a singular Soergel bimodule to
each trivalent graph that we can build from these diagrams via 
these operations.
(Note that webs corresponding to singular Soergel bimodules 
never have edges of negative label, 
but we will allow them in formulas
for convenience of notation, 
with the understanding that the corresponding bimodules are zero.)

\begin{example}\label{example:crossing-singular}
For $N=2$,
the standard way to depict the Soergel bimodule
from \fullref{example:usual-soergel}
(see \eg \cite[Figure 2]{Kh-homfly-soergel})
is built into our conventions:
\begin{gather}
\begin{tikzpicture}[anchorbase,scale=.5,tinynodes]
	\draw[usualbs] (0,0) node[below]{$1$} to[out=90,in=180] (.5,.5);
	\draw[usualbs] (1,0) node[below]{$1$} to[out=90,in=0] (.5,.5);
	\draw[usualbs] (.5,.5) to[out=90,in=270] (.5,1);
	\draw[usualbs] (.5,1) to[out=180,in=270] (0,1.5) node[above,yshift=-2pt]{$1$};
	\draw[usualbs] (.5,1) to[out=0,in=270] (1,1.5) node[above,yshift=-2pt]{$1$};
\end{tikzpicture}
:=
\begin{tikzpicture}[anchorbase,scale=.5,tinynodes]
	\draw[usualbs] (.5,.5) to[out=180,in=270] (0,1) node[above,yshift=-2pt]{$1$};
	\draw[usualbs] (.5,.5) to[out=0,in=270] (1,1) node[above,yshift=-2pt]{$1$};
	\draw[usualbs] (.5,0) node[below]{$2$} to[out=90,in=270] (.5,.5);
\end{tikzpicture}
\otimes_{\rring[2]}
\begin{tikzpicture}[anchorbase,scale=.5,tinynodes]
	\draw[usualbs] (0,0) node[below]{$1$} to[out=90,in=180] (.5,.5);
	\draw[usualbs] (1,0) node[below]{$1$} to[out=90,in=0] (.5,.5);
	\draw[usualbs] (.5,.5) to[out=90,in=270] (.5,1) node[above,yshift=-2pt]{$2$};
\end{tikzpicture}
\leftrightsquigarrow
\qpar^{\tm 1}\rring\otimes_{\rring[\{\elstuff{i}\}]}\rring\ouriso
\psplit{2}{1,1}\pmerge{1,1}{}.
\end{gather}
Also of importance will be the ladder-rung bimodules:
\begin{gather}
\begin{tikzpicture}[anchorbase,scale=.5,tinynodes]
	\draw[usualbs] (0,0) node[below]{$k$} to (0,1) node[above,yshift=-2pt]{\phantom{k}};
	\draw[usualbs] (1,0) node[below]{$l$} to (1,1);
	\draw[usualbs] (1,.35) to (.5,.5) node[above,yshift=-2pt]{$a$} to (0,.65);
\end{tikzpicture}
:=
\begin{tikzpicture}[anchorbase,scale=.5,tinynodes]
	\draw[usualbs] (-.5,.5) to[out=0,in=270] (0,1) to (0,2);
	\draw[usualbs] (-.5,.5) to[out=180,in=270] (-1,1);
	\draw[usualbs] (-.5,0) node[below]{$l$} to[out=90,in=270] (-.5,.5);
	\draw[usualbs] (-1,1) to[out=90,in=0] (-1.5,1.5);
	\draw[usualbs] (-2,0) node[below]{$k$} to (-2,1) to[out=90,in=180] (-1.5,1.5);
	\draw[usualbs] (-1.5,1.5) to[out=90,in=270] (-1.5,2) node[above,yshift=-2pt]{\phantom{k}};
	\node at (-.75,.9) {$a$};
\end{tikzpicture}
\quad\&\quad
\begin{tikzpicture}[anchorbase,scale=.5,tinynodes]
	\draw[usualbs] (0,0) node[below]{$k$} to (0,1) node[above,yshift=-2pt]{\phantom{k}};
	\draw[usualbs] (1,0) node[below]{$l$} to (1,1);
	\draw[usualbs] (0,.35) to (.5,.5) node[above,yshift=-2pt]{$a$} to (1,.65);
\end{tikzpicture}
:=
\begin{tikzpicture}[anchorbase,scale=.5,tinynodes]
	\draw[usualbs] (.5,.5) to[out=180,in=270] (0,1) to (0,2) node[above,yshift=-2pt]{\phantom{k}};
	\draw[usualbs] (.5,.5) to[out=0,in=270] (1,1);
	\draw[usualbs] (.5,0) node[below]{$k$} to[out=90,in=270] (.5,.5);
	\draw[usualbs] (1,1) to[out=90,in=180] (1.5,1.5);
	\draw[usualbs] (2,0) node[below]{$l$} to (2,1) to[out=90,in=0] (1.5,1.5);
	\draw[usualbs] (1.5,1.5) to[out=90,in=270] (1.5,2);
	\node at (.75,1.1) {$a$};
\end{tikzpicture}
\end{gather}
that will be used to build the square bimodules appearing in the complexes in \eqref{eq:colored-rouquier} below.
\end{example}

\begin{example}\label{example:assomaps-singular}
There exist $\qpar$-degree $0$ bimodule isomorphisms
\begin{gather}\label{eq:asso-bimodules}
\begin{tikzpicture}[anchorbase,scale=.5,tinynodes]
	\draw[usualbs] (.5,-.5) to[out=180,in=90] (0,-1);
	\draw[usualbs] (.5,-.5) to[out=0,in=90] (1,-1);
	\draw[usualbs] (.5,0) node[above,yshift=-2pt]{$k\tp l\tp m$} to[out=270,in=90] (.5,-.5);
	\draw[usualbs] (0,-1) to[out=270,in=90] (0,-2) node[below]{$k$};
	\draw[usualbs] (1,-1.5) to[out=180,in=90] (.5,-2) node[below]{$l$};
	\draw[usualbs] (1,-1.5) to[out=0,in=90] (1.5,-2) node[below]{$m$};
	\draw[usualbs] (1,-1) to[out=270,in=90] (1,-1.5);
\end{tikzpicture}
\ouriso
\begin{tikzpicture}[anchorbase,scale=.5,tinynodes]
	\draw[usualbs] (-.5,-.5) to[out=0,in=90] (0,-1);
	\draw[usualbs] (-.5,-.5) to[out=180,in=90] (-1,-1);
	\draw[usualbs] (-.5,0) node[above,yshift=-2pt]{$k\tp l\tp m$} to[out=270,in=90] (-.5,-.5);
	\draw[usualbs] (0,-1) to[out=270,in=90] (0,-2) node[below]{$m$};
	\draw[usualbs] (-1,-1.5) to[out=0,in=90] (-.5,-2) node[below]{$l$};
	\draw[usualbs] (-1,-1.5) to[out=180,in=90] (-1.5,-2) node[below]{$k$};
	\draw[usualbs] (-1,-1) to[out=270,in=90] (-1,-1.5);
\end{tikzpicture}
\quad\&\quad
\begin{tikzpicture}[anchorbase,scale=.5,tinynodes]
	\draw[usualbs] (.5,.5) to[out=180,in=270] (0,1);
	\draw[usualbs] (.5,.5) to[out=0,in=270] (1,1);
	\draw[usualbs] (.5,0) node[below]{$k\tp l\tp m$} to[out=90,in=270] (.5,.5);
	\draw[usualbs] (0,1) to[out=90,in=270] (0,2) node[above,yshift=-2pt]{$k$};
	\draw[usualbs] (1,1.5) to[out=180,in=270] (.5,2) node[above,yshift=-2pt]{$l$};
	\draw[usualbs] (1,1.5) to[out=0,in=270] (1.5,2) node[above,yshift=-2pt]{$m$};
	\draw[usualbs] (1,1) to[out=90,in=270] (1,1.5);
\end{tikzpicture}
\ouriso
\begin{tikzpicture}[anchorbase,scale=.5,tinynodes]
	\draw[usualbs] (-.5,.5) to[out=0,in=270] (0,1);
	\draw[usualbs] (-.5,.5) to[out=180,in=270] (-1,1);
	\draw[usualbs] (-.5,0) node[below]{$k\tp l\tp m$} to[out=90,in=270] (-.5,.5);
	\draw[usualbs] (0,1) to[out=90,in=270] (0,2) node[above,yshift=-2pt]{$m$};
	\draw[usualbs] (-1,1.5) to[out=0,in=270] (-.5,2) node[above,yshift=-2pt]{$l$};
	\draw[usualbs] (-1,1.5) to[out=180,in=270] (-1.5,2) node[above,yshift=-2pt]{$k$};
	\draw[usualbs] (-1,1) to[out=90,in=270] (-1,1.5);
\end{tikzpicture}
\end{gather}
that follow from the isomorphisms in \eqref{eq:asso-maps}.
Hence, we can unambiguously write
\begin{gather}
\begin{tikzpicture}[anchorbase,scale=.5,tinynodes]
	\draw[usualbs] (0,0) node[below]{$k_{1}$} to[out=90,in=180] (.5,.5);
	\draw[usualbs] (1,0) node[below]{$k_{r}$} to[out=90,in=0] (.5,.5);
	\draw[usualbs] (.5,.5) to[out=90,in=270] (.5,1) node[above,yshift=-2pt]{$k_{1}\tp\dots\tp k_{r}$};
	\node at (.5,.1) {...};
\end{tikzpicture}
\quad\&\quad
\begin{tikzpicture}[anchorbase,scale=.5,tinynodes]
	\draw[usualbs] (.5,.5) to[out=180,in=270] (0,1) node[above,yshift=-2pt]{$k_{1}$};
	\draw[usualbs] (.5,.5) to[out=0,in=270] (1,1) node[above,yshift=-2pt]{$k_{r}$};
	\draw[usualbs] (.5,0) node[below]{$k_{1}\tp\dots\tp k_{r}$} to[out=90,in=270] (.5,.5);
	\node at (.5,1) {...};
\end{tikzpicture}
\end{gather}
\end{example}

\subsection{Rickard--Rouquier complexes}\label{subsec:genusg-complex}

\begin{definition}\label{definition:homcat}
Given an additive category $\catstuff{C}$, 
we denote its bounded homotopy category by
$\hcat{\catstuff{C}}$.
This is the category 
whose objects are bounded chain complexes, and whose morphisms are 
homotopy classes of chain maps. 
We will use $\ouriso[\simeq]$ to denote isomorphisms in $\hcat{\catstuff{C}}$, 
\ie homotopy equivalence.
\end{definition}

Recalling \fullref{subsection:intro-conventions}, we can view the 
objects in $\hcat{\catstuff{C}}$ as finite direct sums 
$\bigoplus_{i}\tpar^{k_{i}}\obstuff{X}_{i}$, 
equipped with a differential $d$ with $\qdeg[\tpar](d)=-1$.
There is a $\tpar$-degree zero inclusion of categories
$\catstuff{C}\hookrightarrow\hcat{\catstuff{C}}$ given
by considering objects of $\catstuff{C}$ as one-term complexes 
concentrated in $\tpar$-degree $0$.
We also remark that we can consider $\hcat{\twocatstuff{C}}$ for a 
$2$-category $\twocatstuff{C}$, 
by passing to the homotopy category in each $\Hom$-category. 
In particular, if $\catstuff{C}$ is monoidal, then so is $\hcat{\catstuff{C}}$.

We now recall Rickard--Rouquier complexes,
\ie complexes of singular Soergel bimodules that determine maps from the 
(colored) braid group(oid) into certain $\Hom$-categories in $\hcat{\sSbim[N]}$.
Our terminology here arises as these complexes correspond to the Rickard complexes 
(originally defined for symmetric groups)
in categorified quantum groups, 
but also agree with the type $\typeA$ Rouquier complexes in the ``uncolored'' $k=l=1$ case.

They are given as follows:
\begin{gather}\label{eq:colored-rouquier}
\begin{aligned}
\begin{tikzpicture}[anchorbase,scale=.5,tinynodes]
	\draw[usualbs,crossline] (1,0) node[below]{$l$} to[out=90,in=270] (0,1.5) node[above,yshift=-2pt]{$l$};
	\draw[usualbs,crossline] (0,0) node[below]{$k$} to[out=90,in=270] (1,1.5) node[above,yshift=-2pt]{$k$};
\end{tikzpicture}
&:=
\begin{tikzpicture}[anchorbase,scale=.5,tinynodes]
	\draw[usualbs] (0,0) node[below]{$k$} to (0,2) node[above,yshift=-2pt]{$l$};
	\draw[usualbs] (1,0) node[below]{$l$} to (1,.35) to (.5,.5) to (0,.65);
	\draw[usualbs] (0,1.35) to (.5,1.5) to (1,1.65) to (1,2) node[above,yshift=-2pt]{$k$};
\end{tikzpicture}
\xrightarrow{d_{0}^{+}}
\tpar\qpar^{\tm 1}
\begin{tikzpicture}[anchorbase,scale=.5,tinynodes]
	\draw[usualbs] (0,0) node[below]{$k$} to (0,2) node[above,yshift=-2pt]{$l$};
	\draw[usualbs] (1,0) node[below]{$l$} to node[right,xshift=-2pt]{$1$} (1,2) node[above,yshift=-2pt]{$k$};
	\draw[usualbs] (1,.35) to (.5,.5)  to (0,.65);
	\draw[usualbs] (0,1.35) to (.5,1.5) to (1,1.65);
\end{tikzpicture}
\xrightarrow{d_{1}^{+}}
\dots
\xrightarrow{d_{m{-}1}^{+}}
\tpar^{m}\qpar^{\tm m}
\begin{tikzpicture}[anchorbase,scale=.5,tinynodes]
	\draw[usualbs] (0,0) node[below]{$k$} to (0,2) node[above,yshift=-2pt]{$l$};
	\draw[usualbs] (1,0) node[below]{$l$} to node[right,xshift=-2pt]{$m$} (1,2) node[above,yshift=-2pt]{$k$};
	\draw[usualbs] (1,.35) to (.5,.5)  to (0,.65);
	\draw[usualbs] (0,1.35) to (.5,1.5) to (1,1.65);
\end{tikzpicture}
\\
\begin{tikzpicture}[anchorbase,scale=.5,tinynodes]
	\draw[usualbs,crossline] (0,0) node[below]{$k$} to[out=90,in=270] (1,1.5) node[above,yshift=-2pt]{$k$};
	\draw[usualbs,crossline] (1,0) node[below]{$l$} to[out=90,in=270] (0,1.5) node[above,yshift=-2pt]{$l$};
\end{tikzpicture}
&:=
\tpar^{\tm m}\qpar^{m}
\begin{tikzpicture}[anchorbase,scale=.5,tinynodes]
	\draw[usualbs] (0,0) node[below]{$k$} to (0,2) node[above,yshift=-2pt]{$l$};
	\draw[usualbs] (1,0) node[below]{$l$} to node[right,xshift=-2pt]{$m$} (1,2) node[above,yshift=-2pt]{$k$};
	\draw[usualbs] (1,.35) to (.5,.5)  to (0,.65);
	\draw[usualbs] (0,1.35) to (.5,1.5) to (1,1.65);
\end{tikzpicture}
\xrightarrow{d_{m{-}1}^{-}}
\dots
\xrightarrow{d_{1}^{-}}
\tpar^{\tm 1}\qpar
\begin{tikzpicture}[anchorbase,scale=.5,tinynodes]
	\draw[usualbs] (0,0) node[below]{$k$} to (0,2) node[above,yshift=-2pt]{$l$};
	\draw[usualbs] (1,0) node[below]{$l$} to node[right,xshift=-2pt]{$1$} (1,2) node[above,yshift=-2pt]{$k$};
	\draw[usualbs] (1,.35) to (.5,.5)  to (0,.65);
	\draw[usualbs] (0,1.35) to (.5,1.5) to (1,1.65);
\end{tikzpicture}
\xrightarrow{d_{0}^{-}}
\begin{tikzpicture}[anchorbase,scale=.5,tinynodes]
	\draw[usualbs] (0,0) node[below]{$k$} to (0,2) node[above,yshift=-2pt]{$l$};
	\draw[usualbs] (1,0) node[below]{$l$} to (1,.35) to (.5,.5) to (0,.65);
	\draw[usualbs] (0,1.35) to (.5,1.5) to (1,1.65) to (1,2) node[above,yshift=-2pt]{$k$};
\end{tikzpicture}
\end{aligned}
\end{gather}
where $m=\min(k,l)$. 
Our notation denotes \eg that, as a $\tpar\qpar$-graded 
bimodule, $\khbracket{\atgen}{k,l}$ is the direct sum of the indicated terms, 
and the arrows depict the non-zero components of the differentials. 
Recalling the bimodule maps from \eqref{eq:asso-maps}, 
\eqref{eq:singular-iota}, \eqref{eq:singular-demazure}, \eqref{eq:singular-dot-maps}, and omitting 
the $\tpar\qpar$-shifts, these are given by
\begin{gather}\label{eq:diff-rouquier}
d_{i}^{+}\colon
\hspace*{-.25cm}
\begin{tikzpicture}[anchorbase]
	\matrix (m) [matrix of math nodes, ampersand replacement=\&, row sep=2em, column
	sep=3.0em, text height=1.8ex, text depth=0.25ex]{
	\begin{tikzpicture}[anchorbase,scale=.5,tinynodes]
	\draw[usualbs] (0,0) node[below]{$k$} to (0,2.5) node[above,yshift=-2pt]{$l$};
	\draw[usualbs] (1,0) node[below]{$l$} to (1,2.5) node[above,yshift=-2pt]{$k$};
	\draw[usualbs] (1,.35) to (.5,.5) to (0,.65);
	\draw[usualbs] (0,1.85) to (.5,2.0) to (1,2.15);
	\node at (1.5,1.2) {$i\phantom{\tp 1}$};
	\end{tikzpicture}  
	\& 
	\begin{tikzpicture}[anchorbase,scale=.5,tinynodes]
	\draw[usualbs] (0,0) node[below]{$k$} to (0,2.5) node[above,yshift=-2pt]{$l$};
	\draw[usualbs] (1,0) node[below]{$l$} to (1,2.5) node[above,yshift=-2pt]{$k$};
	\draw[usualbs] (1,.35) to (.5,.5) to (0,.65);
	\draw[usualbs] (.75,.4) to[out=90,in=0] (.5,.75) node[above,yshift=-3pt]{$1$} to[out=180,in=90] (.25,.6);
	\draw[usualbs] (0,1.85) to (.5,2.0) to (1,2.15);
	\draw[usualbs] (.75,2.1) to[out=270,in=0] (.5,1.75) node[below,yshift=1pt]{$1$} to[out=180,in=270] (.25,1.9);
	\node at (1.5,1.2) {$i\phantom{\tp 1}$};
	\end{tikzpicture}
	\&
	\begin{tikzpicture}[anchorbase,scale=.5,tinynodes]
	\draw[usualbs] (0,0) node[below]{$k$} to (0,2.5) node[above,yshift=-2pt]{$l$};
	\draw[usualbs] (1,0) node[below]{$l$} to (1,2.5) node[above,yshift=-2pt]{$k$};
	\draw[usualbs] (1,.35) to (.5,.5) to (0,.65);
	\draw[usualbs] (1,.55) to (.5,.7) node[above,yshift=-2pt]{$1$} to (0,.85);
	\draw[usualbs] (0,1.85) to (.5,2.0) to (1,2.15);
	\draw[usualbs] (0,1.65) to (.5,1.8) node[below]{$1$} to (1,1.95);
	\node at (1.5,1.2) {$i\phantom{\tp 1}$};
	\end{tikzpicture}
	\&
	\begin{tikzpicture}[anchorbase,scale=.5,tinynodes]
	\draw[usualbs] (0,0) node[below]{$k$} to (0,2.5) node[above,yshift=-2pt]{$l$};
	\draw[usualbs] (1,0) node[below]{$l$} to (1,2.5) node[above,yshift=-2pt]{$k$};
	\draw[usualbs] (1,.35) to (.5,.5) to (0,.65);
	\draw[usualbs] (0,1.85) to (.5,2.0) to (1,2.15);
	\draw[usualbs] (1,.55) to[out=135,in=270] (.75,1.25) node[left,xshift=2pt]{$1$} to[out=90,in=225] (1,1.95);
	\node at (1.5,1.2) {$i\phantom{\tp 1}$};
	\end{tikzpicture}
	\&
	\begin{tikzpicture}[anchorbase,scale=.5,tinynodes]
	\draw[usualbs] (0,0) node[below]{$k$} to (0,2.5) node[above,yshift=-2pt]{$l$};
	\draw[usualbs] (1,0) node[below]{$l$} to (1,2.5) node[above,yshift=-2pt]{$k$};
	\draw[usualbs] (1,.35) to (.5,.5) to (0,.65);
	\draw[usualbs] (0,1.85) to (.5,2.0) to (1,2.15);
	\node at (1.5,1.2) {$i\tp 1$};
	\end{tikzpicture}
	\\};
	\path[->] ($(m-1-1) + (.3,.1)$) edge node[above]{$\partial$} ($(m-1-2) + (-.55,.1)$);
	\path[->] ($(m-1-2) + (-.55,-.1)$) edge node[below]{$\iota$} ($(m-1-1) + (.3,-.1)$);
	\path[->] ($(m-1-2) + (.3,.1)$) edge node[above]{\eqref{eq:asso-maps}} ($(m-1-3) + (-.55,.1)$);
	\path[->] ($(m-1-3) + (-.55,-.1)$) edge node[below]{\eqref{eq:asso-maps}} ($(m-1-2) + (.3,-.1)$);
	\path[->] ($(m-1-3) + (.3,.1)$) edge node[above]{$\dotdown{}{}$} ($(m-1-4) + (-.55,.1)$);
	\path[->] ($(m-1-4) + (-.55,-.1)$) edge node[below]{$\dotup{}{}$} ($(m-1-3) + (.3,-.1)$);
	\path[->] ($(m-1-4) + (.3,.1)$) edge node[above]{$\iota$} ($(m-1-5) + (-.55,.1)$);
	\path[->] ($(m-1-5) + (-.55,-.1)$) edge node[below]{$\partial$} ($(m-1-4) + (.3,-.1)$);
	\node at (1,1) {\phantom{.}};
	\node at (1,-1) {\phantom{.}};
\end{tikzpicture}
\hspace*{-.25cm}
\colon d_{i}^{-}
\end{gather}
Here the corresponding parabolic subsets, which determine the 
bimodule maps, can be read from the 
indicated sequence of webs, and we use \eg $\iota_{\stuple}^{\stuple[J]}$ as
\begin{gather}
\iota_{\stuple}^{\stuple[J]}\colon
\rring[{\stuple[J]}]
\ouriso
\rring[{\stuple[J]}]\otimes_{\rring[{\stuple[J]}]}\rring[{\stuple[J]}]\otimes_{\rring[{\stuple[J]}]}\rring[{\stuple[J]}]
\hookrightarrow
\rring[{\stuple[J]}]\otimes_{\rring[{\stuple[J]}]}\rring[{\stuple}]\otimes_{\rring[{\stuple[J]}]}\rring[{\stuple[J]}]
=\pmerge{\stuple}{\stuple[J]}\psplit{\stuple[J]}{}.
\end{gather}

\begin{remark}\label{remark:foams}
We note that the differential in the Rickard--Rouquier complexes can be described diagrammatically 
using type $\typeA$ singular Soergel calculus, see \eg \cite[Section 2]{ElLo-modular-rep-typea}. 
Alternatively, we could work with the $n\to\infty$ limit of the $2$-category of
$\mathfrak{gl}_{n}$ foams to describe these $2$-morphisms in $\sSbim[N]$ (here, $n$ is a parameter independent of $N$).
In fact, these two descriptions are equivalent, 
as the type $\typeA$ singular Soergel calculus corresponds to the ``calculus of seams'' in the foam framework.
(See \eg \cite[Section 5.2]{QuRoSa-annular-evaluation} for a precise statement.)
\end{remark}

Finally, the fact that these indeed are complexes follows \eg by comparing 
\eqref{eq:colored-rouquier} to the Rickard complex in the categorified quantum group, 
as in \fullref{remark:foams}.

\begin{example}\label{example:uncolored}
In the uncolored case $k=l=1$ the complexes are
\begin{gather}
\begin{tikzpicture}[anchorbase,scale=.5,tinynodes]
	\draw[usualbs,crossline] (1,0) node[below]{$1$} to[out=90,in=270] (0,1.5) node[above,yshift=-2pt]{$1$};
	\draw[usualbs,crossline] (0,0) node[below]{$1$} to[out=90,in=270] (1,1.5) node[above,yshift=-2pt]{$1$};
\end{tikzpicture}
=
\begin{tikzpicture}[anchorbase,scale=.5,tinynodes]
	\draw[usualbs] (1,0) node[below]{$1$} to (1,.35) to (0,.65);
	\draw[usualbs] (0,1.35) to (0,2) node[above,yshift=-2pt]{$1$};
	\draw[usualbs] (0,0) node[below]{$1$} to (0,1.35) to (1,1.65) to (1,2) node[above,yshift=-2pt]{$1$};
\end{tikzpicture}
\xrightarrow{
\dotdown{1,1}{2}
}
\tpar\qpar^{\tm 1}
\begin{tikzpicture}[anchorbase,scale=.5,tinynodes]
	\draw[usualbs] (1,0) node[below]{$1$} to (1,2) node[above,yshift=-2pt]{$1$};
	\draw[usualbs] (0,0) node[below]{$1$} to (0,2) node[above,yshift=-2pt]{$1$};
\end{tikzpicture}
\quad\&\quad
\begin{tikzpicture}[anchorbase,scale=.5,tinynodes]
	\draw[usualbs,crossline] (0,0) node[below]{$1$} to[out=90,in=270] (1,1.5) node[above,yshift=-2pt]{$1$};
	\draw[usualbs,crossline] (1,0) node[below]{$1$} to[out=90,in=270] (0,1.5) node[above,yshift=-2pt]{$1$};
\end{tikzpicture}
=
\tpar^{\tm 1}\qpar
\begin{tikzpicture}[anchorbase,scale=.5,tinynodes]
	\draw[usualbs] (1,0) node[below]{$1$} to (1,2) node[above,yshift=-2pt]{$1$};
	\draw[usualbs] (0,0) node[below]{$1$} to (0,2) node[above,yshift=-2pt]{$1$};
\end{tikzpicture}
\xrightarrow{
\dotup{1,1}{2}
}
\begin{tikzpicture}[anchorbase,scale=.5,tinynodes]
	\draw[usualbs] (1,0) node[below]{$1$} to (1,.35) to (0,.65);
	\draw[usualbs] (0,1.35) to (0,2) node[above,yshift=-2pt]{$1$};
	\draw[usualbs] (0,0) node[below]{$1$} to (0,1.35) to (1,1.65) to (1,2) node[above,yshift=-2pt]{$1$};
\end{tikzpicture}
\end{gather}
\end{example}

\begin{remark}\label{remark:crossing-conventions}
The conventions in 
\fullref{example:uncolored} are the same as in \cite{Ro-cat-braid-group}, 
except that in that work, there is no shift on
the bimodule $\psplit{2}{1,1}\pmerge{1,1}{}$.
\end{remark}

\begin{example}\label{example:pitchfork}
There exist $\qpar$-degree $0$ isomorphisms in $\hcat{\sSbim[N]}$
\begin{gather}\label{eq:pitchfork}
\begin{tikzpicture}[anchorbase,scale=.5,tinynodes]
	\draw[usualbs] (0,0) node[below]{$l$} to[out=90,in=180] (.5,.5);
	\draw[usualbs] (1,0) node[below]{$m$} to[out=90,in=0] (.5,.5);
	\draw[usualbs] (.5,.5) to[out=90,in=270] (.5,1) to (.5,2) node[above,yshift=-2pt]{$l\tp m$};
	\draw[usualbs,crossline] (-1,0) node[below]{$k$} to[out=90,in=270] (2,2) node[above,yshift=-2pt]{$k$};
\end{tikzpicture}
\ouriso[\simeq]
\begin{tikzpicture}[anchorbase,scale=.5,tinynodes]
	\draw[usualbs] (0,0) node[below]{$l$} to (0,1) to[out=90,in=180] (.5,1.5);
	\draw[usualbs] (1,0) node[below]{$m$} to (1,1) to[out=90,in=0] (.5,1.5);
	\draw[usualbs] (.5,1.5) to[out=90,in=270] (.5,2) node[above,yshift=-2pt]{$l\tp m$};
	\draw[usualbs,crossline] (-1,0) node[below]{$k$} to[out=90,in=270] (2,2) node[above,yshift=-2pt]{$k$};
\end{tikzpicture}
\quad\&\quad
\begin{tikzpicture}[anchorbase,scale=.5,tinynodes]
	\draw[usualbs] (0,0) to[out=90,in=180] (.5,.5);
	\draw[usualbs] (1,0) to[out=90,in=0] (.5,.5);
	\draw[usualbs] (.5,.5) to[out=90,in=270] (.5,1) node[above,yshift=-2pt]{$k\tp l$};
	\draw[usualbs,crossline] (1,-1) node[below]{$l$} to[out=90,in=270] (0,0);
	\draw[usualbs,crossline] (0,-1) node[below]{$k$} to[out=90,in=270] (1,0);
\end{tikzpicture}
\ouriso[\simeq]
\qpar^{kl}
\begin{tikzpicture}[anchorbase,scale=.5,tinynodes]
	\draw[usualbs] (0,-1) node[below]{$k$} to (0,0) to[out=90,in=180] (.5,.5);
	\draw[usualbs] (1,-1) node[below]{$l$} to (1,0) to[out=90,in=0] (.5,.5);
	\draw[usualbs] (.5,.5) to[out=90,in=270] (.5,1) node[above,yshift=-2pt]{$k\tp l$};
\end{tikzpicture}
\quad\&\quad
\begin{tikzpicture}[anchorbase,scale=.5,tinynodes]
	\draw[usualbs] (0,0) to[out=90,in=180] (.5,.5);
	\draw[usualbs] (1,0) to[out=90,in=0] (.5,.5);
	\draw[usualbs] (.5,.5) to[out=90,in=270] (.5,1) node[above,yshift=-2pt]{$k\tp l$};
	\draw[usualbs,crossline] (0,-1) node[below]{$k$} to[out=90,in=270] (1,0);
	\draw[usualbs,crossline] (1,-1) node[below]{$l$} to[out=90,in=270] (0,0);
\end{tikzpicture}
\ouriso[\simeq]
\qpar^{\tm kl}
\begin{tikzpicture}[anchorbase,scale=.5,tinynodes]
	\draw[usualbs] (0,-1) node[below]{$k$} to (0,0) to[out=90,in=180] (.5,.5);
	\draw[usualbs] (1,-1) node[below]{$l$} to (1,0) to[out=90,in=0] (.5,.5);
	\draw[usualbs] (.5,.5) to[out=90,in=270] (.5,1) node[above,yshift=-2pt]{$k\tp l$};
\end{tikzpicture}
\end{gather}
as well as variants with analogous $\qpar$-shifts involving split bimodules.
\end{example}

Let $\stuple$ be a parabolic subgroup, 
and let $\num{\stuple}$ denote the number of 
entries in the corresponding tuple 
(\ie for an $r$-tuple $\stuple$, $\num{\stuple} =r$). 
Given a braid generator $\bgen\in\braidg[\num{\stuple}]$, 
we let $\khbracket{\bgen^{\pm1}}{\stuple}$ denote the complex 
given by placing appropriately labeled 
vertical strands next to the corresponding complex in \eqref{eq:colored-rouquier}, 
\ie by taking tensor product over $\K$ with 
the rings $\rring[(k_1,\ldots,k_{i-1})]$ and 
$\rring[(k_{i+2},\ldots,k_{\num{\stuple}})]$.

\begin{definition}\label{definition:colored-rouquier}
For $\stuple$ and $\braid\in\braidg[{\num{\stuple}}]$, 
fix an expression $\braid=\bgen[i_{1}]^{\pm1}\dots\bgen[i_{r}]^{\pm1}$.
Define 
\[
\khbracket{\braid}{\stuple} 
:= 
\khbracket{\bgen[i_{1}]^{\pm1}}{\stuple^{\prime}} 
\dots 
\khbracket{\bgen[i_{r}]^{\pm1}}{\stuple}
\]
where, on the right-hand side, we use composition in $\hcat{\sSbim[N]}$, 
\ie tensor product of the complexes of singular Soergel bimodules.
\end{definition}

By \eg the results in \cite[Section 5.2]{QuRoSa-annular-evaluation}, 
the complex $\khbracket{\braid}{\stuple}$ does not depend,
up to isomorphism,
on the choice of expression for $\braid$.
Thus, the assignment $\braid\mapsto\khbracket{\braid}{\stuple}$
gives an action
of $\braidg[\num{\stuple}]$ on  $\hcat{\sSbim[N]}$. We get:

\begin{proposition}\label{proposition:handlebody-rouquier}
There is an action of $\braidg[g,n]$ 
on $\hcat{\sSbim[N]}$ determined by 
the assignment $\braid\mapsto\khbracket{\braid}{\stuple}$.
\end{proposition}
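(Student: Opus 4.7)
The plan is to derive this proposition as a direct consequence of two earlier results in the paper: the embedding $\braidg[g,n] \hookrightarrow \braidg[g+n]$ from \fullref{proposition:artintits-a-higher-g}, and the fact, noted just after \fullref{definition:colored-rouquier}, that $\braid \mapsto \khbracket{\braid}{\stuple}$ yields a well-defined action of the classical braid group $\braidg[{\num{\stuple}}]$ on $\hcat{\sSbim[N]}$.

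Concretely, I would fix $\stuple = (k_{1}, \dots, k_{g+n})$ with $\num{\stuple} = g+n$, interpreting the first $g$ entries as the labels on the core strands and the remaining $n$ entries as the labels on the ``usual'' strands; then $N = k_{1} + \dots + k_{g+n}$. The embedding \eqref{eq:into-typeA} sends a handlebody braid $\braid \in \braidg[g,n]$ to a classical braid in $\braidg[g+n]$, and \fullref{definition:colored-rouquier} then assigns to it the complex $\khbracket{\braid}{\stuple}$ in $\hcat{\sSbim[N]}$. The sought-after action of $\braidg[g,n]$ is simply the restriction of the $\braidg[g+n] = \braidg[{\num{\stuple}}]$-action along this embedding; in particular, independence (up to isomorphism) of the choice of expression for $\braid$ is inherited from the corresponding classical statement, which itself ultimately reduces to verifying invariance of the complexes in \eqref{eq:colored-rouquier} under the colored type $\typeA$ braid relations (handled via the singular Soergel calculus, or equivalently via $\mathfrak{gl}_{n}$-foams, as per the reference to \cite[Section 5.2]{QuRoSa-annular-evaluation} and \fullref{remark:foams}).

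Since the hard work is already packaged into \fullref{proposition:artintits-a-higher-g} and the classical colored Rouquier complex construction, I do not anticipate any serious obstacle. The only point requiring a moment's care is to confirm that the generators $\bgen[i], \tgen[i] \in \braidg[g,n]$ from \fullref{definition:alg-braid-group} map under \eqref{eq:into-typeA} to the expected elements of $\braidg[g+n]$; in particular, the twist $\tgen[i]$, which topologically threads a strand around the $i^{\mathrm{th}}$ core strand, becomes a specific positive product of classical braid generators whose associated colored Rouquier complex realizes this picture at the level of singular Soergel bimodules. This is a direct unpacking of \eqref{eq:braid-group-iso} together with \fullref{definition:colored-rouquier}, and warrants no further argument beyond recording the explicit correspondence.
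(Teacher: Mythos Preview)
Your proposal is correct and matches the paper's own proof essentially verbatim: the paper simply notes that the classical braid group acts (by the discussion preceding the proposition) and then composes with the embedding from \fullref{proposition:artintits-a-higher-g}. Your additional remarks about fixing $\stuple$ with $\num{\stuple}=g+n$ and tracking the images of $\bgen[i],\tgen[i]$ are harmless elaborations of the same argument.
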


\begin{proof}
By the discussion above, we have an action of the classical braid group. 
Composing this action with the map from 
\fullref{proposition:artintits-a-higher-g} gives the desired action of the handlebody braid group.
\end{proof}

\section{Colored HOMFLYPT homology for links in handlebodies}\label{section:homology}

In this section, we proceed to construct our
triply-graded invariant of links in $\hbody$, 
with \fullref{theorem:homfly} and \fullref{corollary:colored-homfly} being the main statements.
We keep the notation from the previous sections and begin with some preliminaries. 

\subsection{A reminder on Hochschild cohomology}\label{subsec:reminder-hochschild}

Let $\setstuff{A}$ be a $\qpar$-graded $\K$-algebra, 
and recall that we may regard any $\qpar$-graded 
$\setstuff{A}$-bimodule $\morstuff{B}$ 
as a $\qpar$-graded left module over the enveloping algebra $\ering{\setstuff{A}}$.
The Hochschild cohomology
of $\setstuff{A}$ with coefficients in $\morstuff{B}$
is the $\apar\qpar$-graded $\K$-vector space
\begin{gather}
\cHH(\setstuff{A},\morstuff{B})
:=
{%\textstyle
\bigoplus_{a\in\Z}}\,\cHH[a](\setstuff{A},\morstuff{B})
\end{gather}
with $a$-degree component defined by
\begin{gather}\label{eq:defhochschild}
\cHH[a](\setstuff{A},\morstuff{B}) 
:=
\EXT_{\ering{\setstuff{A}}}^{a}(\setstuff{A},\morstuff{B})
={%\textstyle
\bigoplus_{s\in\Z}}\,
\Ext_{\ering{\setstuff{A}}}^{a}(\qpar^{s}\setstuff{A},\morstuff{B}).
\end{gather}
(Compare our notation here to \fullref{convention:grading}.)

The relevant case for our considerations is when 
$\setstuff{A}=\rring[\stuple]=(\rring[\stuple])^{\mathrm{op}}$.
Here, for $\stuple=\emptyset$, 
Khovanov showed that the triply-graded link homology from \cite{KhRo-link-homologies-2} 
can be constructed using the Hochschild homology 
(defined using $\setstuff{Tor}$ instead of $\setstuff{Ext}$)
of Soergel bimodules; see \cite[Section 1.1]{Kh-homfly-soergel}.
Recall from \eqref{eq:el-sym} that $\rring[\stuple]$ is a polynomial ring, so
Hochschild homology and cohomology are isomorphic (up to a shift).
We work with the latter since 
\eg in this framework the invariant of the (colored) unknot inherits a natural algebra structure \cite{Ho-young-torus}, 
which is important for various considerations.

\begin{example}\label{example:hochschild}
Let $\stuple=(k_{1},\dots,k_{r})$.
Recall that $\rring[\stuple]$ is a 
polynomial ring (and, in particular, is Koszul). 
Hence, we can compute Hochschild cohomology using the Koszul resolution of $\rring[\stuple]$, 
which is the free resolution of $\rring[\stuple]$ as an 
$\rring[\stuple]$-bimodule given by
\begin{gather}\label{eq:koszul}
{%\textstyle
\bigotimes_{j=1}^{r}}\,
\left(
{%\textstyle
\bigotimes_{i=1}^{k_{j}}}\,
\big(
\hpar\qpar^{2i}
\eering{\rring[\stuple]}
\xrightarrow{\efunc_{i}\otimes 1-1\otimes\efunc_{i}}
\eering{\rring[\stuple]}
\big)
\right).
\end{gather}
Here $\hpar$ denotes a shift up 
in an auxiliary homological degree, 
and the outer tensor products are taken over $\eering{\rring[\stuple]}$.
Given a $\rring[\stuple]$-bimodule $\morstuff{B}$, 
taking the ``internal'' $\qpar$-graded $\Hom$ of complexes $\iHOM_{\eering{\rring[\stuple]}}(\placeholder,\morstuff{B})$
(\ie applying $\HOM_{\eering{\rring[\stuple]}}(\placeholder,\morstuff{B})$ 
to the terms and differentials of a chain complex to 
obtain a cochain complex)
gives a complex concentrated in non-negative cohomological degree $\apar$, 
which is the negative of the $\hpar$-degree.
The $a^{\mathrm{th}}$ cohomology of this 
complex is $\cHH[a](\rring[\stuple],\morstuff{B})$. 

Computing for $\morstuff{B}=\rring[\stuple]$ gives the following.
For each $j$, fix a set of variables $\{\theta_{i}\mid 1\leq i\leq k_{j}\}$ 
with $\qdeg[\apar\qpar](\theta_{i})=(1,-2i)$, and recall that $\qdeg[\apar\qpar](\efunc_{i})=(0,2i)$.
We then have an isomorphism of $\apar\qpar$-graded $\K$-vector spaces
\begin{gather}
\cHH(\rring[\stuple],\rring[\stuple])
\ouriso
{%\textstyle
\bigotimes_{j=1}^{r}}\,
\left(
\K[\efunc_{1},\dots,\efunc_{k_{j}}]\otimes\extalg\{\theta_{i}\mid 1\leq i\leq k_{j}\}
\right),
\end{gather}
where $\extalg\{\theta_{i}\mid 1\leq i\leq k_{j}\}$ denotes the exterior algebra.
\end{example}

Since Hochschild cohomology is functorial with respect to bimodule morphisms, 
we can apply $\cHH$ to a complex of $\rring[\stuple]$-bimodules term-wise to obtain a complex of 
$\apar\qpar$-graded $\K$-vector spaces. 
(In fact, since our ring is commutative, these $\K$-vectors spaces inherit an action of $\rring[\stuple]$, 
so can be thought of as trivial $\rring[\stuple]$-bimodules.)

In particular, 
let $\bimodq{\rring[\stuple]}$ denote the category of $\qpar$-graded, 
finitely-generated $\rring[\stuple]$-bimodules,
and let $\hcat{\bimodq{\rring[\stuple]}}$ be its homotopy category. 
We get a functor
\begin{gather}
\cHH_{\stuple}(\placeholder)
:=
{%\textstyle
\bigoplus_{a\in\Z}}\,\cHH[a]_{\stuple}(\placeholder)
\colon\hcat{\bimodq{\rring[\stuple]}}\to\hcat{\vecaq}
\end{gather}
whose $a$-degree component is the functor
\begin{gather}\label{eq:hochschild-functor}
\cHH[a]_{\stuple}(\placeholder):=\cHH[a](\rring[\stuple],\placeholder)
\colon\hcat{\bimodq{\rring[\stuple]}}\to\hcat{\vecq}.
\end{gather}

\subsection{Towards handlebody HOMFLYPT homology}\label{subsec:towards-homfly}

Fix integers 
$M,l_{1},\dots,l_{n}\in\N[\geq 1]$,
called the core and link colors, respectively. 
For any $g \geq 0$, these choices determine
a parabolic subset 
$\mtuple:=(M,\dots,M,l_{1},\dots,l_{n})$
with $\num{\mtuple}=g+n$.
We view $\mtuple$ as providing a coloring for
braids $\braid\in\braidg[g,n]$ as in \fullref{subsec:genusg-complex}, 
where strands are colored at the bottom by the entries of $\mtuple$.
We will call a colored braid $(\braid,\mtuple)$ balanced if the colors at the 
top and bottom of the $i^{\mathrm{th}}$ position agree for all $i$.
For the duration, we only consider balanced colorings and any 
braid or link will be colored by default.

\begin{example}\label{example:coloring-verma}
The prototypical example of a balanced coloring is the case where the link is uncolored, 
\ie where 
$l_{1}=\dots=l_{n}=1$ 
and $M$ is arbitrary. 
In general, $M$ should be viewed as being ``very large,'' \ie 
$M\gg l_{i}$ 
for all $i$;
compare \eg to \cite{IoLeZh-verma-schur-weyl}, 
where the core of the solid torus is colored by a Verma module.
\end{example}

\begin{remark}\label{remark:all-colors}
It is possible to work with 
any balanced coloring of $\braid\in\braidg[g,n]$. 
However, the core strands are not topologically distinguishable, 
hence should be colored uniformly.
\end{remark}

Consider $\ssbim[\stuple]:=\twoEnd_{\sSbim[N]}(\stuple)$
which is a $\qpar$-graded, full, monoidal subcategory of $\bimodq{\rring[\stuple]}$.
The monoidal structure is inherited from the horizontal composition in $\sSbim[N]$, 
\ie it is given by tensor product over $\rring[\stuple]$. 
We will occasionally denote this by $\otimes_{\rring[\stuple]}$, 
in addition to our previous notation for this operation, 
which was simply concatenation.

Recalling \fullref{example:assomaps-singular} and \fullref{proposition:handlebody-rouquier}, 
and motivated by \fullref{remark:stuck},
we define:

\begin{definition}\label{definition:coloring-merge-split}
For $\braid\in\braidg[g,n]$ and $(\braid,\mtuple)$ a balanced coloring, 
we let
\begin{gather}
\khbracket{\braid}{\hbody}
:=
\left(
\begin{tikzpicture}[anchorbase,scale=.5,tinynodes]
	\draw[polebs] (.5,1) to[out=180,in=270] (0,1.5) node[above,yshift=-2pt,infinity]{$M$};
	\draw[polebs] (.5,1) to[out=0,in=270] (1,1.5) node[above,yshift=-2pt,infinity]{$M$};
	\draw[polebs] (0,0) node[below,infinity]{$M$} to[out=90,in=180] (.5,.5);
	\draw[polebs] (1,0) node[below,infinity]{$M$} to[out=90,in=0] (.5,.5);
	\draw[polebs] (.5,.5) to[out=90,in=270] (.5,1);
	\draw[usualbs] (2,0) node[below]{$l_{1}$} to (2,1.5) node[above,yshift=-2pt]{$l_{1}$};
	\draw[usualbs] (3,0) node[below]{$l_{n}$} to (3,1.5) node[above,yshift=-2pt]{$l_{n}$};
	\node at (.5,.2) {...};
	\node at (.5,1.4) {...};
	\node at (2.5,.5) {...};
\end{tikzpicture}\right)
\otimes_{\rring[\mtuple]}
\khbracket{\braid}{\mtuple}
\in\hcat{\ssbim[\mtuple]}.
\end{gather}
\end{definition}

\begin{example}\label{example:coloring}
In the cases $g=0,1$ we have $\khbracket{\braid}{\hbody}=\khbracket{\braid}{\mtuple}$.
For $g=2$, we have
\begin{gather}
\braid=
\begin{tikzpicture}[anchorbase,scale=.425,smallnodes]
	\draw[pole,crosspole] (2,0) to[out=90,in=270] (2,2);
	\draw[usual,crossline] (3,0) to[out=90,in=0] (2.75,.625) to[out=180,in=0] (.75,.625) to[out=180,in=270] (.5,1);
	\draw[pole,crosspole] (1,0) to[out=90,in=270] (1,2);
	\draw[usual,crossline,directed=1] (.5,1) to[out=90,in=180] (.75,1.375) to[out=0,in=180] (2.75,1.375) to[out=0,in=270] (3,2);
\end{tikzpicture}
\quad\&\quad
\khbracket{\braid}{\mtuple}
=
\begin{tikzpicture}[anchorbase,scale=.425,smallnodes]
	\draw[polebs] (2,0) node[below,infinity]{$M$} to[out=90,in=270] (2,2) node[above,yshift=-2pt,infinity]{$M$};
	\draw[usualbs,crossline] (3,0) node[below]{$l$} to[out=90,in=0] (2.75,.625) to[out=180,in=0] (.75,.625) to[out=180,in=270] (.5,1);
	\draw[polebs,crossline] (1,0) node[below,infinity]{$M$} to[out=90,in=270] (1,2) node[above,yshift=-2pt,infinity]{$M$};
	\draw[usualbs,crossline] (.5,1) to[out=90,in=180] (.75,1.375) to[out=0,in=180] (2.75,1.375) to[out=0,in=270] (3,2) node[above,yshift=-2pt]{$l$};
\end{tikzpicture}
\quad\&\quad
\khbracket{\braid}{\hbody[2]}=
\begin{tikzpicture}[anchorbase,scale=.425,smallnodes]
	\draw[polebs] (2,0) node[below,infinity]{$M$} to[out=90,in=270] (2,2);
	\draw[usualbs,crossline] (3,0) node[below]{$l$} to[out=90,in=0] (2.75,.625) to[out=180,in=0] (.75,.625) to[out=180,in=270] (.5,1);
	\draw[polebs,crossline] (1,0) node[below,infinity]{$M$} to[out=90,in=270] (1,2);
	\draw[usualbs,crossline] (.5,1) to[out=90,in=180] (.75,1.375) to[out=0,in=180] (2.75,1.375) to[out=0,in=270] (3,2);
	\draw[polebs] (1.5,3) to[out=180,in=270] (1,3.5) node[above,yshift=-2pt,infinity]{$M$};
	\draw[polebs] (1.5,3) to[out=0,in=270] (2,3.5) node[above,yshift=-2pt,infinity]{$M$};
	\draw[polebs] (1,2) to[out=90,in=180] (1.5,2.5);
	\draw[polebs] (2,2) to[out=90,in=0] (1.5,2.5);
	\draw[polebs] (1.5,2.5) to[out=90,in=270] (1.5,3);
	\draw[usualbs,crossline] (3,2) to (3,3.5) node[above,yshift=-2pt]{$l$};
\end{tikzpicture}
\end{gather}
For $g\geq 2$, we generally have
$\khbracket{\braid}{\hbody}\not\ouriso[\simeq]\khbracket{\braid}{\mtuple}$, \cf \fullref{proposition:links-get-stuck} below.
\end{example}

\begin{definition}\label{definition:towards-homfly}
For $\braid\in\braidg[g,n]$ and $(\braid,\mtuple)$ a balanced coloring we let
\begin{gather}
\cHH_{\hbody}(\braid)
:={%\textstyle
\bigoplus_{a\in\Z}}\,\cHH[a]\big(\khbracket{\braid}{\hbody}\big). 
\end{gather}
\end{definition}

By \fullref{proposition:handlebody-rouquier}, 
$\cHH_{\hbody}(\braid)$ is an invariant of 
the colored braid $\braid\in\braidg[g,n]$ taking values in $\hcat{\vecaq}$.

\subsection{Colored handlebody HOMFLYPT homology}\label{subsec:homfly}

In \eqref{eq:colored-homfly} below 
we use $\cHH_{\hbody}(\braid)$ to define $\cHHH{\braid}{\hbody}$, 
an invariant of the colored link $\bclosure\subset\hbody$, valued in $\vecatq$. 
To do so, we establish the following.

\begin{theorem}\label{theorem:homfly}
The assignment 
\begin{gather}
\braidg[g,n]\to\hcat{\vecaq},
\;
\braid\mapsto\cHH_{\hbody}(\braid)
\end{gather}
is invariant under the conjugation \eqref{eq:conjugation} 
and stabilization \eqref{eq:stabilization} 
relations for $\braidg[g,n]$, up to homotopy equivalence 
and grading normalization.
Moreover, it is not generally invariant under the classical 
conjugation relation \eqref{eq:classical-conjugation}
for $\braidg[g+n]$.
\end{theorem}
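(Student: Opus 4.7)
The plan is to verify the three claims of the theorem in turn, exploiting the fact that the merge–split ``cap'' sitting on top of the core strands in $\khbracket{\braid}{\hbody}$ is localised on the first $g$ positions and so interacts trivially with braid generators acting on the last $n$ positions, but blocks cyclicity across the core.

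\textbf{Conjugation invariance.} Let $\topstuff{s}\in\langle\bgen[1],\dots,\bgen[n{-}1]\rangle$. Since $\topstuff{s}$ involves only the $n$ usual strands of $\mtuple$, the complex $\khbracket{\topstuff{s}}{\mtuple}$ is the tensor product over $\K$ of identity bimodules on the first $g$ positions with a nontrivial complex on the last $n$. Denoting by $C$ the merge–split cap appearing in \fullref{definition:coloring-merge-split}, the pitchfork and ``distant commutativity'' isomorphisms of \fullref{example:pitchfork} give a homotopy equivalence
\[
C\otimes_{\rring[\mtuple]}\khbracket{\topstuff{s}^{\pm 1}}{\mtuple}
\;\ouriso[\simeq]\;
\khbracket{\topstuff{s}^{\pm 1}}{\mtuple}\otimes_{\rring[\mtuple]}C
\]
in $\hcat{\ssbim[\mtuple]}$. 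Combining this with the cyclicity (trace) property of Hochschild cohomology, $\cHH(X\otimes Y)\ouriso\cHH(Y\otimes X)$, we compute
\[
\cHH_{\hbody}(\topstuff{s}\braid\topstuff{s}^{-1})
\ouriso[\simeq]
\cHH\big(C\otimes\khbracket{\topstuff{s}}{\mtuple}\otimes\khbracket{\braid}{\mtuple}\otimes\khbracket{\topstuff{s}^{-1}}{\mtuple}\big)
\ouriso[\simeq]
\cHH\big(\khbracket{\topstuff{s}^{-1}}{\mtuple}\otimes\khbracket{\topstuff{s}}{\mtuple}\otimes C\otimes\khbracket{\braid}{\mtuple}\big)
\]
which is homotopy equivalent to $\cHH_{\hbody}(\braid)$.

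\textbf{Stabilization invariance.} The stabilization move only touches the rightmost strand, so the cap $C$ is left intact throughout. Cyclicity lets us isolate the local picture and reduce to showing that, up to an overall $\tpar\qpar\apar$-shift, $\cHH_{(l_{n-1},l_n)}\big(\khbracket{\bgen[n]^{\pm 1}}{(l_{n-1},l_n)}\otimes Y\big)$ is homotopy equivalent to $\cHH_{(l_{n-1})}(Y')$, where $Y'$ is the local picture with the $n$-th strand absorbed. This is the standard colored Markov II computation for (singular) Soergel bimodules, which can be extracted from \cite{Kh-homfly-soergel} in the uncolored case, and whose general colored version uses the Rickard–Rouquier complex of \eqref{eq:colored-rouquier} together with the Koszul resolution of \fullref{example:hochschild}; the core strands contribute only as passive spectators. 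The only real work is tracking the $\apar\tpar\qpar$-shifts that accumulate from the edge terms of the Rickard–Rouquier complex and the internal grading of the Koszul complex; this bookkeeping produces precisely the announced ``grading normalisation''.

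\textbf{Failure of classical conjugation invariance.} To show non-invariance under \eqref{eq:classical-conjugation} it suffices to exhibit one example. We take the two braids $\braid,\braid[c]\in\braidg[2,1]$ of \eqref{eq:not-equal}, which are classically conjugate in $\braidg[3]$ but whose closures in $\hbody[2]$ are not isotopic. The associated complexes $\khbracket{\braid}{\hbody[2]}$ and $\khbracket{\braid[c]}{\hbody[2]}$ have only a small number of terms, so one may compute $\cHH_{\hbody[2]}(\braid)$ and $\cHH_{\hbody[2]}(\braid[c])$ explicitly by applying \fullref{example:hochschild} term-by-term to the Rickard–Rouquier complexes; the cap $C$ at the top of the core strands prevents the two core-crossing patterns from being interchanged via cyclicity. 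Comparing graded Poincaré series (or just a single $\apar$-degree) then exhibits a discrepancy. The main obstacle is purely computational — organising the Koszul-complex calculation so that the discrepancy is visible at low degree — rather than conceptual, since the topological non-isotopy already signals that the invariant \emph{must} distinguish them if it is any good.
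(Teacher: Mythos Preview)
Your arguments for conjugation and stabilization follow essentially the same route as the paper: the cap $C$ commutes past Rickard--Rouquier complexes supported on the last $n$ strands, so cyclicity of Hochschild cohomology gives conjugation invariance, and the stabilization move is local on the rightmost strand and reduces to a colored Markov~II computation. The paper makes the latter step rigorous by introducing the partial Hochschild trace functors $\pfunctor_{\stuple}$ (a colored extension of Hogancamp's construction), proving \fullref{lemma:vertextrace} and \fullref{lemma:skein-II}; your sketch is correct but glosses over exactly this machinery.

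For the failure of classical conjugation invariance your approach diverges from the paper's, and here you should be careful. Your final sentence --- ``the topological non-isotopy already signals that the invariant must distinguish them if it is any good'' --- is circular: non-isotopy never guarantees that a given invariant separates the links, and indeed the whole point of this clause of the theorem is to establish that this particular invariant is sensitive enough. So the computation is not merely bookkeeping; it is the entire content. Your proposed route, a direct term-by-term Hochschild computation of $\cHH_{\hbody[2]}(\braid)$ and $\cHH_{\hbody[2]}(\braid[c])$, would work in principle but is heavier than you suggest. The paper instead argues by decategorification: if the two complexes were homotopy equivalent then their Euler characteristics would agree, and since Hochschild cohomology categorifies the Jones--Ocneanu trace this equality descends to an equality of traces of braided trivalent graphs. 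Applying the skein relation (the decategorification of \fullref{example:uncolored}) reduces this to an equality of ordinary HOMFLYPT polynomials of two links in $\thesphere$, which one checks fails by a direct polynomial computation (the difference is $(a-a^{-1})^{2}-(q-q^{-1})^{2}$). This buys a much shorter and more transparent argument than a direct graded-dimension comparison at the categorified level.
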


The remainder of this section constitutes a proof of this theorem.
Namely, invariance under conjugation holds as a 
special case of the corresponding result for 
colored, triply-graded link homology in $\thesphere$, 
and \fullref{lemma:skein-II} establishes invariance 
(up to a grading shift) under stabilization.
\fullref{proposition:links-get-stuck} shows the failure 
of invariance under the classical conjugation relation.
We stress the importance of this latter fact: any 
invariant of the classical braid group $\braidg[g+n]$, that 
additionally is invariant under classical conjugation and stabilization, 
gives rise to invariants of links in $\hbody$ using 
the inclusion $\braidg[g,n]\hookrightarrow\braidg[g+n]$.
However, such invariants are less-sensitive to 
the topology of $\hbody$, as our results show.

\begin{proposition}\label{proposition:links-get-stuck}
For $g>1$ and $n>0$, there exists handlebody 
braids $\braid,\braid^{\prime}\in\braidg[g,n]$ that are conjugate 
in $\braidg[g+n]$, but satisfy 
$\cHH_{\hbody}(\braid)\not\ouriso[\simeq]\cHH_{\hbody}(\braid^{\prime})$.
\end{proposition}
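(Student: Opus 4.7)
The plan is to produce an explicit pair of handlebody braids $\braid, \braid' \in \braidg[g,n]$ that are conjugate in $\braidg[g+n]$ yet distinguished by $\cHH_\hbody$. For $g \geq 2$ and $n \geq 2$, I would set $\braid := \bgen[g+1]^2$ (a full twist between the first two usual strands) and $\braid' := \bgen[g]\,\bgen[g+1]^2\,\bgen[g]^{-1}$. Both have trivial permutation, so both lie in $\braidg[g,n]$. The conjugating element $\bgen[g] \in \braidg[g+n]$ interchanges the last core and the first usual strand, so it does not belong to $\braidg[g,n]$; in particular, it is not in the subgroup $\langle \bgen[1], \ldots, \bgen[n-1]\rangle$ over which \fullref{definition:braid-quotient} permits conjugation. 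Topologically, $\bclosure[\braid]$ is a Hopf link lying entirely in the ``ball part'' of $\hbody$, whereas $\bclosure[\braid']$ has a component linking non-trivially with the last core strand, so the two give non-isotopic handlebody links. For the edge case $n=1$, one constructs an analogous pair inside $\braidg[g,1] \cong F_g$ via its embedding into $\braidg[g+1]$.

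Using cyclicity of Hochschild cohomology, and writing $C$ for the merge-split cap on the core strands, we have
\[
\cHH_\hbody(\braid') \ouriso[\simeq] \cHH_\mtuple\big( \khbracket{\bgen[g]^{-1}}{\mtuple} \otimes_{\rring[\mtuple]} C \otimes_{\rring[\mtuple]} \khbracket{\bgen[g]}{\mtuple} \otimes_{\rring[\mtuple]} \khbracket{\bgen[g+1]^2}{\mtuple}\big),
\]
while for $\braid$ the Rouquier complex $\khbracket{\bgen[g+1]^2}{\mtuple}$ commutes with $C$ since it touches only the usual strands. The critical point is that $\khbracket{\bgen[g]^{-1}}{\mtuple} \otimes_{\rring[\mtuple]} C \otimes_{\rring[\mtuple]} \khbracket{\bgen[g]}{\mtuple}$ is not homotopy equivalent to $C$ up to $\qpar$-shift. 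Indeed, for conjugation by a core-only crossing $\bgen[i]$ with $i < g$ the pitchfork relations of \fullref{example:pitchfork} absorb both crossings into cancelling grading shifts, and for a usual-only crossing the Rouquier complex passes through $C$ directly; however, the crossing $\bgen[g]$ mixes a color-$M$ strand with a color-$l_1$ strand and admits no analogous simplification.

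The main obstacle is to verify that this failure of absorption produces distinct outputs under $\cHH_\mtuple$. A tractable route is the minimal case $g = n = 2$ with $M = l_1 = l_2 = 1$, where the two complexes can be compared directly using the diagrammatic Soergel calculus of \fullref{section:genusg}. One expects the graded Euler characteristic of $\cHH_\hbody(\braid)$ to factor as the product of a ``cap polynomial'' and the classical HOMFLYPT polynomial of the Hopf link, whereas the Euler characteristic of $\cHH_\hbody(\braid')$ contains additional terms in the core color variables encoding the wrap of a link component around handle $g$. Since the graded Euler characteristic is a homotopy invariant, a polynomial discrepancy forces $\cHH_\hbody(\braid) \not\ouriso[\simeq] \cHH_\hbody(\braid')$ in $\hcat{\vecaq}$, completing the proof.
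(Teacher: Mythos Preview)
Your overall strategy---exhibit an explicit pair of braids conjugate in $\braidg[g+n]$ and distinguish them via the graded Euler characteristic of $\cHH_\hbody$---is the same as the paper's. But your proposal stops short of a proof at the decisive step: you write that ``one expects'' the Euler characteristics to differ and that ``a polynomial discrepancy forces'' the conclusion, without ever producing that discrepancy. The heuristic observations you give (that $\bgen[g]$ mixes core and usual colors, that the pitchfork relation does not obviously absorb it, that the closures are topologically distinct in $\hbody$) are all suggestive, but none of them implies $\cHH_\hbody(\braid)\not\simeq\cHH_\hbody(\braid')$. The proposition asserts existence, so you must actually verify one example; a sketch of why an example \emph{might} work is not enough.

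The paper closes this gap by choosing a simpler pair: $\braid=\tgen[2]\tgen[1]$ and $\braid'=\tgen[1]\tgen[2]$ in $\braidg[2,1]$, with $M=1$. These are conjugate in $\braidg[3]$, and the merge--split cap on two $1$-labeled strands decategorifies (via the relation in \fullref{example:uncolored}) to a crossing. This reduces the Euler-characteristic comparison to the HOMFLYPT polynomials of two explicit $3$-strand braid closures, whose difference is computed to be $(a-a^{-1})^{2}-(q-q^{-1})^{2}\neq 0$. Note also that this example already lives in the $n=1$ case, so the separate treatment you defer for $n=1$ is unnecessary. If you wish to salvage your own pair $\braid=\bgen[g+1]^{2}$, $\braid'=\bgen[g]\bgen[g+1]^{2}\bgen[g]^{-1}$, you would need to carry out an analogous explicit computation; as written, the argument is incomplete.
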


This shows that our handlebody homology, which is defined 
below to be the cohomology of a renormalization of $\cHH_{\hbody}(\placeholder)$, 
distinguishes these handlebody links, while the invariant obtained 
by including into $\braidg[g+n]$ and using the classical (colored) 
triply-graded homology does not.

\begin{proof}
It suffices to give an example, and we provide one in the $g=2$ and $n=1$ 
case that immediately generalizes to any $g\geq 2$ and $n\geq 1$.
Let $\braid=\tgen[2]\tgen[1]$ and $\braid^{\prime}=\tgen[1]\tgen[2]$, 
which are conjugate braids in $\braidg[g+n]$. 
We claim that $\cHH_{\hbody}(\braid)\not\ouriso[\simeq]\cHH_{\hbody}(\braid^{\prime})$, 
and exhibit this explicitly in the case that $M=1$.

Indeed, if they were homotopy equivalent, then the Euler characteristics (\ie alternating 
sums of $\apar\qpar$-graded dimensions) of these complexes would agree. 
However, since the category of (usual) type $\typeA$ Soergel bimodules categorifies the 
type $\typeA$ Hecke algebra, and Hochschild cohomology categorifies the 
Jones--Ocneanu trace, 
this would imply that the Jones--Ocneanu traces of the following braided, 
trivalent graphs agree:
\begin{gather}
\begin{tikzpicture}[anchorbase,scale=.5,tinynodes]
	\draw[usual,crossline] (1,0) to[out=90,in=270] (1,1.5);
	\draw[usual,crossline] (3,0) to[out=90,in=0] (2.75,.5) 
	to[out=180,in=0] (1.75,.5) to[out=180,in=270] (1.5,.75);
	\draw[usual,crossline] (2,0) to[out=90,in=270] (2,1.5);
	\draw[usual,crossline] (1.5,.75) to[out=90,in=180] (1.75,1) to[out=0,in=180] (2.75,1) to[out=0,in=270] (3,1.5);
	\draw[usual,crossline,directed=1] (2,1.5) to[out=90,in=270] (2,3);
	\draw[usual,crossline] (3,1.5) to[out=90,in=0] (2.75,2) 
	to[out=180,in=0] (.75,2) to[out=180,in=270] (.5,2.25);
	\draw[usual,crossline,directed=1] (1,1.5) to[out=90,in=270] (1,3);
	\draw[usual,crossline,directed=1] (.5,2.25) to[out=90,in=180] (.75,2.5) 
	to[out=0,in=180] (2.75,2.5) to[out=0,in=270] (3,3);
%%%
	\draw[usual] (1,4.5) to [out=270,in=180] (1.5,4);
	\draw[usual] (2,4.5) to [out=270,in=0] (1.5,4);
	\draw[usual] (1.5,4) to (1.5,3.5);	
	\draw[usual] (1,3) to [out=90,in=180] (1.5,3.5);
	\draw[usual] (2,3) to [out=90,in=0] (1.5,3.5);
	\draw[usual,crossline] (3,3) to (3,4.5);
%%%
	\draw[thick,mygray,fill=citron,fill opacity=.1] (0,3) 
	to (3.5,3) to (3.5,0) to (0,0) to (0,3);
%%%
	\node at (.5,1.5) {$\braid$};
\end{tikzpicture}
\quad\&\quad
\begin{tikzpicture}[anchorbase,scale=.5,tinynodes]
	\draw[usual,crossline] (2,0) to[out=90,in=270] (2,1.5);
	\draw[usual,crossline] (3,0) to[out=90,in=0] (2.75,.5) to[out=180,in=0] (.75,.5) to[out=180,in=270] (.5,.75);
	\draw[usual,crossline] (1,0) to[out=90,in=270] (1,1.5);
	\draw[usual,crossline] (.5,.75) to[out=90,in=180] (.75,1) 
	to[out=0,in=180] (2.75,1) to[out=0,in=270] (3,1.5);
	\draw[usual,crossline,directed=1] (1,1.5) to[out=90,in=270] (1,3);
	\draw[usual,crossline] (3,1.5) to[out=90,in=0] (2.75,2) to[out=180,in=0] (1.75,2) to[out=180,in=270] (1.5,2.25);
	\draw[usual,crossline,directed=1] (2,1.5) to[out=90,in=270] (2,3);
	\draw[usual,crossline,directed=1] (1.5,2.25) 
	to[out=90,in=180] (1.75,2.5) to[out=0,in=180] (2.75,2.5) to[out=0,in=270] (3,3);
	\draw[usual] (1,4.5) to [out=270,in=180] (1.5,4);
	\draw[usual] (2,4.5) to [out=270,in=0] (1.5,4);
	\draw[usual] (1.5,4) to (1.5,3.5);	
	\draw[usual] (1,3) to [out=90,in=180] (1.5,3.5);
	\draw[usual] (2,3) to [out=90,in=0] (1.5,3.5);
	\draw[usual,crossline] (3,3) to (3,4.5);
	\draw[thick,mygray,fill=citron,fill opacity=.1] (0,3) 
	to (3.5,3) to (3.5,0) to (0,0) to (0,3);
	\node at (.5,1.5) {$\braid^{\prime}$};
\end{tikzpicture}
\end{gather}
Using the decategorification of the first equation in \fullref{example:uncolored}, 
this in turn would imply that the HOMFLYPT polynomials of the links given as the closures of
\begin{gather}
\begin{tikzpicture}[anchorbase,scale=.5,tinynodes]
	\draw[usual,crossline] (1,0) to[out=90,in=270] (1,1.5);
	\draw[usual,crossline] (3,0) to[out=90,in=0] (2.75,.5) 
	to[out=180,in=0] (1.75,.5) to[out=180,in=270] (1.5,.75);
	\draw[usual,crossline] (2,0) to[out=90,in=270] (2,1.5);
	\draw[usual,crossline] (1.5,.75) to[out=90,in=180] (1.75,1) 
	to[out=0,in=180] (2.75,1) to[out=0,in=270] (3,1.5);
	\draw[usual,crossline,directed=1] (2,1.5) to[out=90,in=270] (2,3);
	\draw[usual,crossline] (3,1.5) to[out=90,in=0] (2.75,2) to[out=180,in=0] (.75,2) to[out=180,in=270] (.5,2.25);
	\draw[usual,crossline,directed=1] (1,1.5) to[out=90,in=270] (1,3);
	\draw[usual,crossline,directed=1] (.5,2.25) to[out=90,in=180] (.75,2.5) 
	to[out=0,in=180] (2.75,2.5) to[out=0,in=270] (3,3);
	\draw[usual,crossline] (2,3) to [out=90,in=270] (1,4.5);
	\draw[usual,crossline] (1,3) to [out=90,in=270] (2,4.5);
	\draw[usual,crossline] (3,3) to (3,4.5);
	\draw[thick,mygray,fill=citron,fill opacity=.1] (0,3) 
	to (3.5,3) to (3.5,0) to (0,0) to (0,3);
	\node at (.5,1.5) {$\braid$};
\end{tikzpicture}
\quad\&\quad
\begin{tikzpicture}[anchorbase,scale=.5,tinynodes]
	\draw[usual,crossline] (2,0) to[out=90,in=270] (2,1.5);
	\draw[usual,crossline] (3,0) to[out=90,in=0] (2.75,.5) 
	to[out=180,in=0] (.75,.5) to[out=180,in=270] (.5,.75);
	\draw[usual,crossline] (1,0) to[out=90,in=270] (1,1.5);
	\draw[usual,crossline] (.5,.75) to[out=90,in=180] (.75,1) 
	to[out=0,in=180] (2.75,1) to[out=0,in=270] (3,1.5);
	\draw[usual,crossline,directed=1] (1,1.5) to[out=90,in=270] (1,3);
	\draw[usual,crossline] (3,1.5) to[out=90,in=0] (2.75,2) 
	to[out=180,in=0] (1.75,2) to[out=180,in=270] (1.5,2.25);
	\draw[usual,crossline,directed=1] (2,1.5) to[out=90,in=270] (2,3);
	\draw[usual,crossline,directed=1] (1.5,2.25) to[out=90,in=180] (1.75,2.5) 
	to[out=0,in=180] (2.75,2.5) to[out=0,in=270] (3,3);
	\draw[usual,crossline] (2,3) to [out=90,in=270] (1,4.5);
	\draw[usual,crossline] (1,3) to [out=90,in=270] (2,4.5);
	\draw[usual,crossline] (3,3) to (3,4.5);
	\draw[thick,mygray,fill=citron,fill opacity=.1] (0,3) 
	to (3.5,3) to (3.5,0) to (0,0) to (0,3);
	\node at (.5,1.5) {$\braid^{\prime}$};
\end{tikzpicture}
\end{gather}
agree. 
However, a computation shows that the difference 
between their (reduced) HOMFLYPT polynomials is 
$(a-a^{\tm 1})^{2}-(q-q^{\tm 1})^{2}$, where $a$, $q$ 
are variables (at the decategorified level) corresponding to $\apar$, $\qpar$.
\end{proof}

We now turn out attention to the behavior of $\cHH_{\hbody}(\braid)$ 
under stabilization \eqref{eq:stabilization}.
Our main technical tool will be the partial Hochschild 
trace from \cite[Section 3]{Ho-young-torus}, 
which we now adapt to the colored setting.
The construction of this functor is motivated as follows.
Since Hochschild cohomology satisfies the classical conjugation relation 
(\ie the relation \eqref{eq:classical-conjugation}), 
we informally view this operation as a mean 
to take the closure of (the singular Soergel bimodule associated to) a web diagram. 
In order to study the stabilization relation, we would like to be 
able to take this closure ``one strand at a time''
in a manner that is compatible with taking Hochschild cohomology.

Recall that the Hochschild cohomology of an $\rring[\stuple]$-bimodule $\morstuff{M}$ is defined as
\begin{gather}
\cHH[a](\rring[\stuple],\morstuff{M}) 
=
\EXT_{\eering{\rring[\stuple]}}^{a}(\rring[\stuple],\morstuff{M}) 
\ouriso
\HOM_{\ddcat{\bimodq{\rring[\stuple]}}}(\rring[\stuple],\hpar^{a}\morstuff{M}),
\end{gather}
where here we follow \fullref{convention:grading} for the $\qpar$-graded $\Hom$.
Here $\dcat{\bimodq{\rring[\stuple]}}$ is the bounded derived category, 
and 
we emphasize that the homological degree therein
is not the $\tpar$-degree from \fullref{subsec:genusg-complex}, 
but rather the $\hpar$-degree from \fullref{example:hochschild}
(which should be viewed as ``perpendicular'' to the homological degree of the Rickard--Rouquier complexes).
Our discussion above suggests that we should consider functors between 
the categories $\dcat{\bimodq{\rring[\stuple]}}$ 
for various $\stuple$ that are compatible with the functors 
$\HOM_{\ddcat{\bimodq{\rring[\stuple]}}}(\rring[\stuple],\placeholder)$.

To this end, 
given $\stuple=(k_{1},\dots,k_{r})$ we let $\stuple^{-}:=(k_{1},\dots,k_{r-1})$, 
\ie $\stuple^{-}$ is obtained from $\stuple$ by removing the last entry. 
Let $\morstuff{Q}_{\stuple}^{k_{r}}:=\eering{\rring[\stuple]} 
\big/(\efunc_{i}(\ualph[r])\otimes 1-1\otimes\efunc_{i}\big(\ualph[r])\big)_{i=1}^{k_{r}}$.
Using the notation in \eqref{eq:el-sym}, we have
\begin{gather}
\rring[\stuple]
\ouriso
\morstuff{Q}_{\stuple}^{k_{r}}
\otimes_{\eering{\rring[\stuple^{-}]}}\rring[\stuple^{-}],
\end{gather}
which suggests that we consider the functor 
$\ifunctor_{\stuple}\colon 
\dcat{\bimodq{\rring[\stuple^{-}]}} 
\to\dcat{\bimodq{\rring[\stuple]}}$ given by derived tensor product 
$\dtimes$ with 
$\morstuff{Q}_{\stuple}^{k_{r}}$ 
over $\eering{\rring[\stuple^{-}]}$. 
We then obtain $\pfunctor_{\stuple}\colon\dcat{\bimodq{\rring[\stuple]}}\to\dcat{\bimodq{\rring[\stuple^{-}]}}$, 
which we define to be the right adjoint to $\ifunctor_{\stuple}$, using derived tensor-hom adjunction.

The functors $\pfunctor_{\stuple}$ and $\ifunctor_{\stuple}$ admit the following explicit descriptions. 
We have an isomorphism
\begin{gather}
\morstuff{Q}_{\stuple}^{k_{r}}
\ouriso
{%\textstyle
\bigotimes_{i=1}^{k_{r}}}\,
\left(
\hpar\qpar^{2i}
\eering{\rring[\stuple]}
\xrightarrow{\efunc_{i}\otimes 1-1\otimes\efunc_{i}}
\eering{\rring[\stuple]}
\right)=:\morstuff{K}_{\stuple}^{k_{r}}
\end{gather}
in $\dcat{\bimodq{\rring[\stuple]}}$, where the (outer) tensor product is taken over $\eering{\rring[\stuple]}$. 
Since $\morstuff{K}_{\stuple}^{k_{r}}$ is a complex of free $\eering{\rring[\stuple]}$-modules, 
given any complex $\morstuff{M}\in\dcat{\bimodq{\rring[\stuple]}}$, 
$\pfunctor_{\stuple}(\morstuff{M})$ is the complex
\begin{gather}
\iHOM_{\eering{\rring[\stuple]}}(\morstuff{K}_{\stuple}^{k_{r}},\morstuff{M})
\ouriso
{%\textstyle
\bigotimes_{i=1}^{k_{r}}}\,
\left(
\morstuff{M}
\xrightarrow{\efunc_{i}\otimes 1-1\otimes\efunc_{i}}
\apar\qpar^{\tm 2i}
\morstuff{M}
\right).
\end{gather}
(In the case that $\morstuff{M}$ is a complex, 
we interpret the latter as a shift of the cone of the indicated chain map.)
Similarly, for $\morstuff{N}\in\dcat{\bimodq{\rring[\stuple^{-}]}}$, 
we have that 
\begin{gather}
\ifunctor_{\stuple}(\morstuff{N}) 
=\morstuff{K}_{\stuple}^{k_{r}}\otimes_{\eering{\rring[\stuple^{-}]}}\morstuff{N},
\end{gather}
where we again interpret the latter as the total complex of this double complex. 
Here, since $\morstuff{K}_{\stuple}^{k_{r}}\ouriso\morstuff{Q}_{\stuple}^{k_{r}}$ 
and the latter is a free $\rring[\stuple^{-}]$-bimodule, 
we also have
\begin{gather}
\ifunctor_{\stuple}(\morstuff{N})= 
\morstuff{Q}_{\stuple}^{k_{r}}\otimes_{\eering{\rring[\stuple^{-}]}}\morstuff{N}.
\end{gather}

Our next result collects the salient features of 
$\ifunctor_{\stuple}$ and $\pfunctor_{\stuple}$ needed 
for our considerations, 
both of which follow immediately from the definition of 
$\ifunctor_{\stuple}$ and $\pfunctor_{\stuple}$.

\begin{lemma}\label{lemma:partial-trace}
For all $\morstuff{M}\in\dcat{\bimodq{\rring[\stuple]}}$ and all $a\in\Z$, there is a functorial isomorphism 
\begin{gather}
\cHH[a](\rring[\stuple],\morstuff{M})\cong\cHH[a]\big(\rring[\stuple^{-}],\pfunctor_{\stuple}(\morstuff{M})\big).
\end{gather}
Additionally, given $\morstuff{N},\morstuff{P}\in\dcat{\bimodq{\rring[\stuple^{-}]}}$,
we have 
\begin{gather}
\pfunctor_{\stuple}\big(\ifunctor_{\stuple}(N) 
\dtimes_{\rring[\stuple]}\morstuff{M} \dtimes_{\rring[\stuple]}
\ifunctor_{\stuple}(\morstuff{P})\big)
\ouriso
\morstuff{N}\dtimes_{\rring[\stuple]}\pfunctor_{\stuple}(\morstuff{M})\dtimes_{\rring[\stuple]}\morstuff{P}
\end{gather}
Finally, 
setting $\stuple^{1}:=\{k_{1},\dots,k_{s}\}$ and $\stuple^{2}:=\{k_{s{+}1},\dots,k_{r}\}$,
we have
\begin{gather}
\pfunctor_{\stuple}\big(
\morstuff{M}_{1}\otimes_{\K}\morstuff{M}_{2}
\big)
\ouriso
\morstuff{M}_{1}\otimes_{\K}\pfunctor_{\stuple}(\morstuff{M}_{2}).
\end{gather}
for $\morstuff{M}_{1}\in\dcat{\bimodq{\rring[\stuple^{1}]}}$ 
and $\morstuff{M}_{2}\in\dcat{\bimodq{\rring[\stuple^{2}]}}$.\qedhere
\end{lemma}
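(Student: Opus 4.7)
The plan is to prove all three parts using the explicit descriptions of $\ifunctor_{\stuple}$ and $\pfunctor_{\stuple}$ already recorded in the text, combined with the adjunction between them.

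For the first statement I would start by identifying $\ifunctor_{\stuple}(\rring[\stuple^{-}])\simeq\rring[\stuple]$ in $\dcat{\bimodq{\rring[\stuple]}}$. This follows from the identification $\rring[\stuple]\cong\morstuff{Q}_{\stuple}^{k_{r}}\otimes_{\eering{\rring[\stuple^{-}]}}\rring[\stuple^{-}]$ already noted in the text, together with the fact that $\morstuff{K}_{\stuple}^{k_{r}}$ is a complex of free $\eering{\rring[\stuple]}$-modules and $\eering{\rring[\stuple]}$ is a free $\eering{\rring[\stuple^{-}]}$-module, so that the underived tensor product computes the derived one. Then the adjunction $\ifunctor_{\stuple}\dashv\pfunctor_{\stuple}$ gives
\begin{gather*}
\cHH[a](\rring[\stuple],\morstuff{M})
=\EXT_{\eering{\rring[\stuple]}}^{a}(\ifunctor_{\stuple}(\rring[\stuple^{-}]),\morstuff{M})
\cong\EXT_{\eering{\rring[\stuple^{-}]}}^{a}(\rring[\stuple^{-}],\pfunctor_{\stuple}(\morstuff{M}))
=\cHH[a](\rring[\stuple^{-}],\pfunctor_{\stuple}(\morstuff{M})),
\end{gather*}
with functoriality clear from functoriality of the adjunction.

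For the second (projection-formula style) statement, the crucial computational observation is the ``collapse''
\begin{gather*}
\ifunctor_{\stuple}(\morstuff{N})\dtimes_{\rring[\stuple]}\morstuff{M}\ouriso\morstuff{N}\dtimes_{\rring[\stuple^{-}]}\morstuff{M},
\end{gather*}
where on the right $\morstuff{M}$ is viewed via restriction as an $\rring[\stuple^{-}]$-bimodule (while retaining its residual $\rring[\stuple]$-bimodule structure). To see this, use the identification $\rring[\stuple]\ouriso\rring[\stuple^{-}]\otimes_{\K}\K[\efunc_{1}(\ualph[r]),\dots,\efunc_{k_{r}}(\ualph[r])]$ from \eqref{eq:el-sym}, so that $\ifunctor_{\stuple}(\morstuff{N})$ can be identified with $\morstuff{N}\otimes_{\K}\K[\efunc_{1}(\ualph[r]),\dots,\efunc_{k_{r}}(\ualph[r])]$ carrying the diagonal action of the $\efunc_{j}(\ualph[r])$; tensoring over $\rring[\stuple]$ then trivially absorbs the polynomial factor. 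Applying this on both sides reduces the LHS to $\pfunctor_{\stuple}(\morstuff{N}\dtimes_{\rring[\stuple^{-}]}\morstuff{M}\dtimes_{\rring[\stuple^{-}]}\morstuff{P})$. Because $\pfunctor_{\stuple}(\placeholder)\ouriso\iHOM_{\eering{\rring[\stuple]}}(\morstuff{K}_{\stuple}^{k_{r}},\placeholder)$ and $\morstuff{K}_{\stuple}^{k_{r}}$ only involves the $\efunc_{i}(\ualph[r])$, and since $\morstuff{N}$ and $\morstuff{P}$ carry no action of these variables, the internal Hom passes through the outer tensor factors.

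For the third statement I would argue similarly: the Koszul complex $\morstuff{K}_{\stuple}^{k_{r}}$ used to build $\pfunctor_{\stuple}$ involves only the $\efunc_{i}(\ualph[r])$, and because $\morstuff{M}_{1}$ is an $\rring[\stuple^{1}]$-bimodule it supports no action of these variables. Consequently $\iHOM_{\eering{\rring[\stuple]}}(\morstuff{K}_{\stuple}^{k_{r}},\morstuff{M}_{1}\otimes_{\K}\morstuff{M}_{2})\ouriso\morstuff{M}_{1}\otimes_{\K}\iHOM_{\eering{\rring[\stuple^{2}]}}(\morstuff{K}_{\stuple}^{k_{r}},\morstuff{M}_{2})$, which gives the claim. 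The main obstacle is not conceptual but bookkeeping: one must carefully track the various residual bimodule structures (left $\rring[\stuple^{-}]$, right $\rring[\stuple^{-}]$, plus the surviving $\K[\efunc_{i}(\ualph[r])]$-action from $\morstuff{M}$) and verify that all identifications genuinely take place at the derived level, using freeness of $\eering{\rring[\stuple]}$ over $\eering{\rring[\stuple^{-}]}$ (and the analogous statement for $\stuple^{1},\stuple^{2}$) to ensure that derived tensor products agree with underived ones wherever claimed.
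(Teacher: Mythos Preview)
Your proof is correct. The paper itself gives no proof of this lemma: it simply asserts that the statements ``follow immediately from the definition of $\ifunctor_{\stuple}$ and $\pfunctor_{\stuple}$'' and marks the lemma with a \texttt{\textbackslash qedhere}. Your write-up is precisely the unpacking of that assertion---the adjunction argument for the first part (via $\ifunctor_{\stuple}(\rring[\stuple^{-}])\simeq\rring[\stuple]$), the extension-of-scalars collapse $\ifunctor_{\stuple}(\morstuff{N})\dtimes_{\rring[\stuple]}\morstuff{M}\simeq\morstuff{N}\dtimes_{\rring[\stuple^{-}]}\morstuff{M}$ for the second, and the observation that $\morstuff{K}_{\stuple}^{k_{r}}$ only involves the variables $\efunc_{i}(\ualph[r])$ for the third---and the bookkeeping you flag (tracking residual module structures, freeness of $\eering{\rring[\stuple]}$ over $\eering{\rring[\stuple^{-}]}$ so that derived and underived tensor products coincide) is exactly the content that the paper leaves implicit. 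There is no alternative route to compare; you have supplied the omitted details.
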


We will use the functors $\ifunctor_{\stuple}$ and $\pfunctor_{\stuple}$ to 
give a ``local'' proof of invariance under stabilization. 
Note that there is a $\qpar$-degree $0$, fully faithful inclusion functor 
$\ssbim[\stuple]\hookrightarrow\dcat{\bimodq{\rring[\stuple]}}$
given by viewing a singular Soergel bimodule as a complex concentrated in 
$\hpar$-degree zero. 
Further, this functor is monoidal (with respect to $\otimes_{\rring[\stuple]}$ 
on the former and $\dtimes_{\rring[\stuple]}$ on the latter)
since singular Soergel bimodules are free as either left or right 
$\rring[\stuple]$-modules.
(The latter can be deduced from the fact that $\rring[\stuple]$ is free 
over $\rring[{\stuple[J]}]$ for $\stuple\subseteq\stuple[J]$, \cf \fullref{subsec:singular-pre-primer}.)

Given this, we now develop a graphical interpretation for the action of the functors $\ifunctor_{\stuple}$ and $\pfunctor_{\stuple}$ 
on singular Bott--Samelson bimodules, again adapting \cite[Section 3.3]{Ho-young-torus} to the singular setting. 
Since our eventual aim is to apply these results to $\cHH_{\hbody}(\placeholder)$, 
we will focus on the (bimodules appearing in the) complex $\khbracket{\braid}{\hbody}$.
Let $\mtuple:=(M,\dots,M,l_{1},\dots,l_{n})$, then, 
for $\morstuff{B}\in\ssbim[\mtuple^{-}]$ and $\morstuff{C}\in\ssbim[\mtuple]$,
we depict $\ifunctor_{\mtuple}$ and $\pfunctor_{\mtuple}$ as follows:
\begin{gather}\label{eq:partial-diagrams}
\ifunctor_{\mtuple}\left(
\begin{tikzpicture}[anchorbase,scale=.5, tinynodes]
	\draw[polebs] (-1,-.25) to[out=90,in=270] (-1,.25);
	\draw[polebs] (0,-.25) to[out=90,in=270] (0,.25);
	\draw[usualbs] (1,-.25) to[out=90,in=270] (1,.25);
	\draw[usualbs] (2,-.25) to[out=90,in=270] (2,.25);
	\draw[usualbs] (1,.75) to[out=90,in=270] (1,1);
	\draw[usualbs] (2,.75) to[out=90,in=270] (2,1);
	\draw[usualbs] (1,1) to (1,2);
	\draw[usualbs] (2,1) to (2,2);
	\draw[polebs] (-1,.75) to[out=90,in=180] (-.5,1);
	\draw[polebs] (0,.75) to[out=90,in=0] (-.5,1);
	\draw[polebs] (-.5,1) to[out=90,in=270] (-.5,1.5);
	\draw[polebs] (-.5,1.5) to[out=180,in=270] (-1,1.75) to (-1,2);
	\draw[polebs] (-.5,1.5) to[out=0,in=270] (0,1.75) to (0,2);
	\draw[black,fill=mygray,fill opacity=.35] (-1.25,.25) rectangle (2.25,.75);
	\node at (.5,.4) {$\morstuff{B}$};
	\node at (-.5,0) {$\text{...}$};
	\node at (-.5,1.875) {$\text{...}$};
	\node at (1.5,0) {$\text{...}$};
	\node at (1.5,1.25) {$\text{...}$};
\end{tikzpicture}
\right)
=
\begin{tikzpicture}[anchorbase,scale=.5, tinynodes]
	\draw[polebs] (-1,-.25) to[out=90,in=270] (-1,.25);
	\draw[polebs] (0,-.25) to[out=90,in=270] (0,.25);
	\draw[usualbs] (1,-.25) to[out=90,in=270] (1,.25);
	\draw[usualbs] (2,-.25) to[out=90,in=270] (2,.25);
	\draw[usualbs] (1,.75) to[out=90,in=270] (1,1);
	\draw[usualbs] (2,.75) to[out=90,in=270] (2,1);
	\draw[usualbs] (1,1) to (1,2);
	\draw[usualbs] (2,1) to (2,2);
	\draw[polebs] (-1,.75) to[out=90,in=180] (-.5,1);
	\draw[polebs] (0,.75) to[out=90,in=0] (-.5,1);
	\draw[polebs] (-.5,1) to[out=90,in=270] (-.5,1.5);
	\draw[polebs] (-.5,1.5) to[out=180,in=270] (-1,1.75) to (-1,2);
	\draw[polebs] (-.5,1.5) to[out=0,in=270] (0,1.75) to (0,2);
	\draw[black,fill=mygray,fill opacity=.35] (-1.25,.25) rectangle (2.25,.75);
	\node at (.5,.4) {$\morstuff{B}$};
	\node at (-.5,0) {$\text{...}$};
	\node at (-.5,1.875) {$\text{...}$};
	\node at (1.5,0) {$\text{...}$};
	\node at (1.5,1.25) {$\text{...}$};
	\draw[usualbs] (3,-.25) node[below]{$l_{n}$} to (3,2) node[above,yshift=-2pt]{$l_{n}$};
\end{tikzpicture}
\quad\&\quad
\pfunctor_{\mtuple} 
\left(
\begin{tikzpicture}[anchorbase,scale=.5, tinynodes]
	\draw[polebs] (-1,-.25) to[out=90,in=270] (-1,.25);
	\draw[polebs] (0,-.25) to[out=90,in=270] (0,.25);
	\draw[usualbs] (1,-.25) to[out=90,in=270] (1,.25);
	\draw[usualbs] (2,-.25) to[out=90,in=270] (2,.25);
	\draw[usualbs] (1,.75) to[out=90,in=270] (1,1);
	\draw[usualbs] (2,.75) to[out=90,in=270] (2,1);
	\draw[usualbs] (1,1) to (1,2);
	\draw[usualbs] (2,1) to (2,2);
	\draw[polebs] (-1,.75) to[out=90,in=180] (-.5,1);
	\draw[polebs] (0,.75) to[out=90,in=0] (-.5,1);
	\draw[polebs] (-.5,1) to[out=90,in=270] (-.5,1.5);
	\draw[polebs] (-.5,1.5) to[out=180,in=270] (-1,1.75) to (-1,2);
	\draw[polebs] (-.5,1.5) to[out=0,in=270] (0,1.75) to (0,2);
	\draw[black,fill=mygray,fill opacity=.35] (-1.25,.25) rectangle (2.25,.75);
	\node at (.5,.4) {$\morstuff{C}$};
	\node at (-.5,0) {$\text{...}$};
	\node at (-.5,1.875) {$\text{...}$};
	\node at (1.5,0) {$\text{...}$};
	\node at (1.5,1.25) {$\text{...}$};
\end{tikzpicture}
\right)
=
\begin{tikzpicture}[anchorbase,scale=.5, tinynodes]
	\draw[polebs] (-1,-.25) to[out=90,in=270] (-1,.25);
	\draw[polebs] (0,-.25) to[out=90,in=270] (0,.25);
	\draw[usualbs] (1,-.25) to[out=90,in=270] (1,.25);
	\draw[usualbs] (2,-.25) to[out=90,in=270] (2,.25);
	\draw[usualbs] (1,.75) to[out=90,in=270] (1,1);
	\draw[usualbs] (2,.75) to[out=90,in=270] (2,1);
	\draw[usualbs] (1,1) to (1,2);
	\draw[usualbs] (2,1) to (2,2);
	\draw[polebs] (-1,.75) to[out=90,in=180] (-.5,1);
	\draw[polebs] (0,.75) to[out=90,in=0] (-.5,1);
	\draw[polebs] (-.5,1) to[out=90,in=270] (-.5,1.5);
	\draw[polebs] (-.5,1.5) to[out=180,in=270] (-1,1.75) to (-1,2);
	\draw[polebs] (-.5,1.5) to[out=0,in=270] (0,1.75) to (0,2);
	\draw[black,fill=mygray,fill opacity=.35] (-1.25,.25) rectangle (2.25,.75);
	\node at (.5,.4) {$\morstuff{C}$};
	\node at (-.5,0) {$\text{...}$};
	\node at (-.5,1.875) {$\text{...}$};
	\node at (1.5,0) {$\text{...}$};
	\node at (1.5,1.25) {$\text{...}$};
	\draw[cusual] (2,-.25) to[out=270,in=180] (2.5,-.75) to[out=0,in=270] (3,-.25) 
	to (3,.5) node[right]{$l_{n}$} to (3,2) 
	to[out=90,in=0] (2.5,2.5) to[out=180,in=90] (2,2);
\end{tikzpicture}
\end{gather} 
Similarly,
taking Hochschild cohomology 
will be depicted
by closing all (non-core and core) strands.
In this language, the first statement in \fullref{lemma:partial-trace} says 
that we obtain the same result whether we close all strands at once 
or one at a time, while the second and third are
\begin{gather}\label{eq:tracelocality}
\begin{gathered}
\begin{tikzpicture}[anchorbase,scale=.5, tinynodes]
	\draw[polebs] (-1,-1) to[out=90,in=270] (-1,-.5);
	\draw[polebs] (-2,-1) to[out=90,in=270] (-2,-.5);
	\draw[usualbs] (0,-1) to[out=90,in=270] (0,-.5);
	\draw[usualbs] (1,-1) to[out=90,in=270] (1,-.5);
	\draw[usualbs] (2,-1) to[out=90,in=270] (2,.5);
	\draw[polebs] (-1,0) to[out=90,in=270] (-1,.5);
	\draw[polebs] (-2,0) to[out=90,in=270] (-2,.5);
	\draw[usualbs] (0,0) to[out=90,in=270] (0,.5);
	\draw[usualbs] (1,0) to[out=90,in=270] (1,.5);
	\draw[polebs] (-1,1) to[out=90,in=270] (-1,1.5);
	\draw[polebs] (-2,1) to[out=90,in=270] (-2,1.5);
	\draw[usualbs] (0,1) to[out=90,in=270] (0,1.5);
	\draw[usualbs] (1,1) to[out=90,in=270] (1,1.5);
	\draw[usualbs] (0,2) to[out=90,in=270] (0,2.5);
	\draw[usualbs] (1,2) to[out=90,in=270] (1,2.5);
	\draw[usualbs] (2,1) to[out=90,in=270] (2,3.35);
	\draw[usualbs] (0,2.5) to[out=90,in=270] (0,3.25);
	\draw[usualbs] (1,2.5) to[out=90,in=270] (1,3.25);
	\draw[polebs] (-1,2) to[out=90,in=0] (-1.5,2.25);
	\draw[polebs] (-2,2) to[out=90,in=180] (-1.5,2.25);
	\draw[polebs] (-1.5,2.25) to[out=90,in=270] (-1.5,2.75);
	\draw[polebs] (-1.5,2.75) to[out=180,in=270] (-2,3) to (-2,3.25);
	\draw[polebs] (-1.5,2.75) to[out=0,in=270] (-1,3) to (-1,3.25);
	\draw[black,fill=mygray,fill opacity=.35] (-2.25,-.5) rectangle (1.75,0);
	\draw[black,fill=mygray,fill opacity=.35] (-2.25,.5) rectangle (2.25,1);
	\draw[black,fill=mygray,fill opacity=.35] (-2.25,1.5) rectangle (1.75,2);
	\draw[cusual] (2,-1) to[out=270,in=180] (2.5,-1.5) to[out=0,in=270] (3,-1) 
	to (3,.75) node[right]{$l_{n}$} to (3,3.25) to[out=90,in=0] (2.5,3.75) to[out=180,in=90] (2,3.25);
	\node at (-.25,1.675) {\scalebox{.95}{$\morstuff{N}$}};
	\node at (0,.675) {\scalebox{.95}{$\morstuff{M}$}};
	\node at (-.25,-.325) {\scalebox{.95}{$\morstuff{P}$}};
	\node at (2,4) {\phantom{a}};
	\node at (2,-1.5) {\phantom{a}};
	\node at (-1.5,-.7) {...};
	\node at (-1.5,3.125) {...};
	\node at (.5,2.5) {...};
	\node at (.5,-.7) {...};
\end{tikzpicture}
\hspace{-.25cm}\ouriso
\begin{tikzpicture}[anchorbase,scale=.5, tinynodes]
	\draw[polebs] (-1,-1) to[out=90,in=270] (-1,-.5);
	\draw[polebs] (-2,-1) to[out=90,in=270] (-2,-.5);
	\draw[usualbs] (0,-1) to[out=90,in=270] (0,-.5);
	\draw[usualbs] (1,-1) to[out=90,in=270] (1,-.5);
	\draw[polebs] (-1,0) to[out=90,in=270] (-1,.5);
	\draw[polebs] (-2,0) to[out=90,in=270] (-2,.5);
	\draw[usualbs] (0,0) to[out=90,in=270] (0,.5);
	\draw[usualbs] (1,0) to[out=90,in=270] (1,.5);
	\draw[polebs] (-1,1) to[out=90,in=270] (-1,1.5);
	\draw[polebs] (-2,1) to[out=90,in=270] (-2,1.5);
	\draw[usualbs] (0,1) to[out=90,in=270] (0,1.5);
	\draw[usualbs] (1,1) to[out=90,in=270] (1,1.5);
	\draw[usualbs] (0,2) to[out=90,in=270] (0,2.5);
	\draw[usualbs] (1,2) to[out=90,in=270] (1,2.5);
	\draw[usualbs] (0,2.5) to[out=90,in=270] (0,3.25);
	\draw[usualbs] (1,2.5) to[out=90,in=270] (1,3.25);
	\draw[polebs] (-1,2) to[out=90,in=0] (-1.5,2.25);
	\draw[polebs] (-2,2) to[out=90,in=180] (-1.5,2.25);
	\draw[polebs] (-1.5,2.25) to[out=90,in=270] (-1.5,2.75);
	\draw[polebs] (-1.5,2.75) to[out=180,in=270] (-2,3) to (-2,3.25);
	\draw[polebs] (-1.5,2.75) to[out=0,in=270] (-1,3) to (-1,3.25);
	\draw[black,fill=mygray,fill opacity=.35] (-2.25,-.5) rectangle (1.75,0);
	\draw[black,fill=mygray,fill opacity=.35] (-2.25,.5) rectangle (2.25,1);
	\draw[black,fill=mygray,fill opacity=.35] (-2.25,1.5) rectangle (1.75,2);
	\draw[cusual] (2,.5) to[out=270,in=180] (2.5,0) to[out=0,in=270] (3,.5) 
	to (3,.75) node[right]{$l_{n}$} to (3,1) to[out=90,in=0] (2.5,1.5) to[out=180,in=90] (2,1);
	\node at (-.25, 1.675) {\scalebox{.95}{$\morstuff{N}$}};
	\node at (0,.675) {\scalebox{.95}{$\morstuff{M}$}};
	\node at (-.25,-.325) {\scalebox{.95}{$\morstuff{P}$}};
	\node at (2,4) {\phantom{a}};
	\node at (2,-1.5) {\phantom{a}};
	\node at (-1.5,-.65) {...};
	\node at (-1.5,3.125) {...};
	\node at (.5,2.5) {...};
	\node at (.5,-.65) {...};
\end{tikzpicture}
\\
\begin{tikzpicture}[anchorbase,scale=.5, tinynodes]
	\draw[polebs] (-1,1) to[out=90,in=270] (-1,1.5);
	\draw[polebs] (-2,1) to[out=90,in=270] (-2,1.5);
	\draw[usualbs] (-.625,1) to[out=90,in=270] (-.625,1.5);
	\draw[usualbs] (0,1) to[out=90,in=270] (0,1.5);
	\draw[usualbs] (1,1) to[out=90,in=270] (1,1.5);
	\draw[usualbs] (2,1) to[out=90,in=270] (2,1.5);
	\draw[usualbs] (-.625,2) to[out=90,in=270] (-.625,3.25);
	\draw[usualbs] (0,2) to[out=90,in=270] (0,3.25);
	\draw[usualbs] (1,2) to[out=90,in=270] (1,3.25);
	\draw[usualbs] (2,2) to[out=90,in=270] (2,3.25);
	\draw[polebs] (-1,2) to[out=90,in=0] (-1.5,2.25);
	\draw[polebs] (-2,2) to[out=90,in=180] (-1.5,2.25);
	\draw[polebs] (-1.5,2.25) to[out=90,in=270] (-1.5,2.75);
	\draw[polebs] (-1.5,2.75) to[out=180,in=270] (-2,3) to (-2,3.25);
	\draw[polebs] (-1.5,2.75) to[out=0,in=270] (-1,3) to (-1,3.25);
	\draw[black,fill=mygray,fill opacity=.35] (-2.25,1.5) rectangle (.25,2);
	\draw[black,fill=mygray,fill opacity=.35] (.75,1.5) rectangle (2.25,2);
	\node at (-1,1.675) {\scalebox{.95}{$\morstuff{M}_{1}$}};
	\node at (1.5,1.675) {\scalebox{.95}{$\morstuff{M}_{2}$}};
	\node at (-1.5,1.3) {...};
	\node at (-.3,1.3) {...};
	\node at (-.3,2.5) {...};
	\node at (-1.5,3) {...};
	\node at (1.5,2.5) {...};
	\node at (1.5,1.3) {...};
	\draw[cusual] (2,1) to[out=270,in=180] (2.5,.5) to[out=0,in=270] (3,1) 
	to (3,1.5) node[right,yshift=3pt]{$l_{n}$} to (3,3.25) 
	to[out=90,in=0] (2.5,3.75) to[out=180,in=90] (2,3.25);
\end{tikzpicture}
\ouriso
\begin{tikzpicture}[anchorbase,scale=.5, tinynodes]
	\draw[polebs] (-1,1) to[out=90,in=270] (-1,1.5);
	\draw[polebs] (-2,1) to[out=90,in=270] (-2,1.5);
	\draw[usualbs] (-.625,1) to[out=90,in=270] (-.625,1.5);
	\draw[usualbs] (0,1) to[out=90,in=270] (0,1.5);
	\draw[usualbs] (-.625,2) to[out=90,in=270] (-.625,3.25);
	\draw[usualbs] (0,2) to[out=90,in=270] (0,3.25);
	\draw[polebs] (-1,2) to[out=90,in=0] (-1.5,2.25);
	\draw[polebs] (-2,2) to[out=90,in=180] (-1.5,2.25);
	\draw[polebs] (-1.5,2.25) to[out=90,in=270] (-1.5,2.75);
	\draw[polebs] (-1.5,2.75) to[out=180,in=270] (-2,3) to (-2,3.25);
	\draw[polebs] (-1.5,2.75) to[out=0,in=270] (-1,3) to (-1,3.25);
	\draw[black,fill=mygray,fill opacity=.35] (-2.25,1.5) rectangle (.25,2);
	\node at (-1,1.675) {\scalebox{.95}{$\morstuff{M}_{1}$}};
	\node at (-1.5,1.3) {...};
	\node at (-.3,1.3) {...};
	\node at (-.3,2.5) {...};
	\node at (-1.5,3) {...};
\end{tikzpicture}
\otimes_{\K}
\left(
\begin{tikzpicture}[anchorbase,scale=.5, tinynodes]
	\draw[usualbs] (1,1) to[out=90,in=270] (1,1.5);
	\draw[usualbs] (2,1) to[out=90,in=270] (2,1.5);
	\draw[usualbs] (1,2) to[out=90,in=270] (1,3.25);
	\draw[usualbs] (2,2) to[out=90,in=270] (2,3.25);
	\draw[black,fill=mygray,fill opacity=.35] (.75,1.5) rectangle (2.25,2);
	\node at (1.5,1.675) {\scalebox{.95}{$\morstuff{M}_{2}$}};
	\node at (1.5,2.5) {...};
	\node at (1.5,1.3) {...};
	\draw[cusual] (2,1) to[out=270,in=180] (2.5,.5) to[out=0,in=270] (3,1) 
	to (3,1.5) node[right,yshift=3pt]{$l_{n}$} to (3,3.25) 
	to[out=90,in=0] (2.5,3.75) to[out=180,in=90] (2,3.25);
\end{tikzpicture}
\right)
\end{gathered}
\end{gather}

Next, we compute the value of the colored partial trace on the ``merge-split'' bimodule.
(Strictly speaking, we will only use the $k=l=1$ case of \fullref{lemma:skein-I}, 
which is given \eg in \cite[Equation (3.1b)]{Ho-young-torus}.
However, as we are developing the skein calculus for the colored partial trace, 
and since we anticipate applications of this formula to explicit computations of our invariant, 
we take the opportunity to extend \loccit to the colored setting.)

\begin{lemma}\label{lemma:skein-I}
For $k,l\geq 0$, 
there is an $\apar\tpar\qpar$-degree $0$ isomorphism
\begin{gather}\label{eq:preskein-1}
\begin{tikzpicture}[anchorbase,scale=.5,tinynodes]
	\draw[usualbs] (0,0) node[below]{$k$} to[out=90,in=180] (.5,.5);
	\draw[usualbs] (1,0) to[out=90,in=0] (.5,.5);
	\draw[usualbs] (.5,.5) to[out=90,in=270] (.5,1);
	\draw[usualbs] (.5,1) to[out=180,in=270] (0,1.5) node[above,yshift=-2pt]{$k$};
	\draw[usualbs] (.5,1) to[out=0,in=270] (1,1.5);
	\draw[cusual] (1,0) to[out=270,in=180] (1.5,-.5) to[out=0,in=270] (2,0) to (2,.75) node[right,xshift=-2pt]{$l$}
	to[out=90,in=270] (2,1.5) to[out=90,in=0] (1.5,2) to[out=180,in=90] (1,1.5);
\end{tikzpicture}
\ouriso
{%\textstyle
\prod_{i=1}^{l}}\,
\tfrac{\qpar^{k}+\apar\qpar^{\tm k\tm 2i}}{1-\qpar^{2i}} 
\begin{tikzpicture}[anchorbase,scale=.5,tinynodes]
	\draw[usualbs] (0,0) node[below]{$k$} to (0,1.5) node[above,yshift=-2pt]{$k$};
\end{tikzpicture}
\end{gather}
\end{lemma}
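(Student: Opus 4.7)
The plan is to compute $\pfunctor_{\mtuple}$ of the merge-split bimodule
$$\morstuff{M} := \psplit{k+l}{k,l}\pmerge{k,l}{k+l} = \qpar^{-kl}\rring[k,l]\otimes_{\rring[k+l]}\rring[k,l]$$
via its explicit Koszul-complex model. By the discussion preceding \fullref{lemma:partial-trace}, $\pfunctor_{\mtuple}(\morstuff{M})$ is identified with the iterated cone
$$\bigotimes_{i=1}^{l}\bigl(\morstuff{M}\xrightarrow{\efunc_i(\yalph)\otimes 1 - 1\otimes \efunc_i(\yalph)}\apar\qpar^{-2i}\morstuff{M}\bigr),$$
whose differentials act by right-minus-left multiplication by elementary symmetric functions in the $\yalph$-variables. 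The proof then reduces to computing the cohomology of this complex as an $\rring[k]$-bimodule.

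The crucial algebraic input comes from the embedding $\rring[k+l]\hookrightarrow\rring[k,l]$: since $\efunc_j$ is sent to $\sum_{a+b=j}\efunc_a(\ualph)\efunc_b(\yalph)$, the family of relations
$$\sum_{a+b=j}\bigl(\efunc_a(\ualph)\efunc_b(\yalph)\otimes 1 - 1\otimes \efunc_a(\ualph)\efunc_b(\yalph)\bigr) = 0$$
holds in $\morstuff{M}$ for each $j=1,\dots,k+l$. Solving these recursively (starting at $j=1$ and increasing $j$) expresses every Koszul differential $\efunc_i(\yalph)\otimes 1 - 1\otimes \efunc_i(\yalph)$ entirely in terms of differences of $\ualph$-multiplications on the two tensor factors, decoupling the complex from the $\yalph$-action and exposing the underlying $\rring[k]$-bimodule structure.

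From here, I would induct on $l$. The base case $l=0$ is immediate, and for the inductive step I would invoke the associativity isomorphisms of \fullref{example:assomaps-singular} to peel off a single $1$-labeled strand from the merge-split, combined with the locality identity in \fullref{lemma:partial-trace} to isolate the contribution of the outermost $\yalph$-variable. The resulting $l=1$ subcomputation takes place on the rank-$(k+1)$ free bimodule $\rring[k,1]\otimes_{\rring[k+1]}\rring[k,1]$ with basis $\{1\otimes y^{j}\}_{j=0}^{k}$; the differential $y\otimes 1 - 1\otimes y$ becomes a difference of $\efunc_1(\ualph)$-actions by the previous step, and a direct calculation shows that the kernel is a free rank-one $\rring[k]$-module in $\qpar$-degree $k$ while the cokernel (after absorbing the Koszul shift $\apar\qpar^{-2i}$) is free rank-one in $\qpar$-degree $-k-2i$. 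This contributes precisely the factor $(\qpar^k+\apar\qpar^{-k-2i})/(1-\qpar^{2i})$ at each inductive step.

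The principal obstacle is the careful recursive solution of the relations in the middle step. One must track how the top-degree element $\efunc_k(\ualph)$ enters the last relation (corresponding to $j=k+1$, where $\efunc_{k+1}(\ualph)=0$), since this is the source of the asymmetric $\qpar^k$ versus $\qpar^{-k-2i}$ split in the numerators---these reduce to the symmetric $1+\apar\qpar^{-2i}$ when $k=0$, recovering the standard Hochschild cohomology of $\rring[l]$. The grading bookkeeping, which combines the $\qpar^{-kl}$ shift from \eqref{eq:indres} with the Koszul shifts $\apar\qpar^{-2i}$ and those picked up at each inductive step, must then conspire to reproduce the stated product formula without residual normalization.
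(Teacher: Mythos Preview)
Your proposal correctly identifies the key algebraic mechanism: the relations
\[
\sum_{a+b=j}\bigl(\efunc_a(\ualph)\efunc_b(\yalph)\otimes 1 - 1\otimes\efunc_a(\ualph)\efunc_b(\yalph)\bigr)=0
\]
in the merge-split bimodule allow one to rewrite the Koszul differentials $\efunc_i(\yalph)\otimes 1-1\otimes\efunc_i(\yalph)$ in terms of differences of $\ualph$-actions. This is exactly the content of the paper's equation~\eqref{eq:motivation}.

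The gap is in your inductive organization. Peeling a $1$-strand via associativity replaces the $(k,l)$ merge-split through $k{+}l$ by the $(k,l{-}1,1)$ merge-split through the \emph{same} thick edge $k{+}l$. After applying $\pfunctor_{(k,l{-}1,1)}$ to close the outermost $1$-strand, the resulting $\rring[k,l{-}1]$-bimodule complex is not the $(k,l{-}1)$ merge-split through $k{+}l{-}1$: the middle label is still $k{+}l$, so the induction hypothesis does not apply. Nothing in \fullref{lemma:partial-trace} collapses that label. Your $l=1$ subcomputation is also misstated: the kernel and cokernel of $y-y'$ on $\rring[k,1]\otimes_{\rring[k+1]}\rring[k,1]$ are not rank one over $\rring[k]$, since a free $\K[y]$-factor survives on each; that factor is the source of the denominators $1/(1-\qpar^{2i})$, which cannot otherwise appear.

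The paper avoids induction altogether. It replaces the merge-split by its Koszul model (a polynomial ring tensored with $\extalg\{\theta_t\}$, with $d(\theta_t)=\efunc_t(\ualph\cup\yalph)-\efunc_t(\ualph'\cup\yalph')$), applies the partial trace (adjoining odd variables $\xi_s$ with $d(\xi_s)=\efunc_s(\yalph)-\efunc_s(\yalph')$), and then performs a single recursive change of odd generators $\theta_t\mapsto\Theta_t$ so that $d(\Theta_t)=\efunc_t(\ualph)-\efunc_t(\ualph')$. Since $\efunc_t(\ualph)=0$ for $t>k$, the generators $\Theta_{k+1},\dots,\Theta_{k+l}$ are closed and contribute a genuine exterior algebra, while the $\xi_s$ cancel the $\yalph'$-variables. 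One is left with $\rring[k]\otimes\K[\efunc_s(\yalph)]\otimes\extalg\{\Theta_b:b>k\}$, whose $\apar\qpar$-graded rank over $\rring[k]$ is immediately the product $\prod_{i=1}^{l}(\qpar^{k}+\apar\qpar^{-k-2i})/(1-\qpar^{2i})$. Your recursive observation is the seed of this change of variables; what is missing is carrying it out globally rather than attempting a strand-by-strand reduction.
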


\begin{proof}
In the $k=0$ case, the result simply 
claims that the $l$-colored circle 
is a $\K$-vector space of $\apar\qpar$-graded dimension 
$\prod_{i=1}^{l}\frac{1+\apar\qpar^{\tm 2i}}{1-\qpar^{2i}}$.
This follows directly from \fullref{example:hochschild}.

We thus assume that $k\geq 1$, and proceed as in the proof of \cite[Proposition 3.10]{Ho-young-torus}.
Namely, we explicitly write down the value of $\pfunctor_{\stuple}$ on the bimodule in the 
left-hand side of \eqref{eq:preskein-1}, apply a change of variables, 
and use this to explicitly identify the result in the derived category.

To this end, we assign alphabets of $\qpar$-degree $2$ variables 
to the boundary points of the corresponding web as follows:
\begin{gather}
\begin{tikzpicture}[anchorbase,scale=.5,tinynodes]
	\draw[usualbs] (0,0) node[below]{$\ualph[1]^{\prime}$} to[out=90,in=180] (.5,.5);
	\draw[usualbs] (1,0) node[below]{$\ualph[2]^{\prime}$} to[out=90,in=0] (.5,.5);
	\draw[usualbs] (.5,.5) to[out=90,in=270] (.5,1);
	\draw[usualbs] (.5,1) to[out=180,in=270] (0,1.5) node[above,yshift=-2pt]{$\ualph[1]$};
	\draw[usualbs] (.5,1) to[out=0,in=270] (1,1.5) node[above,yshift=-2pt]{$\ualph[2]$};
\end{tikzpicture}
\end{gather}
where $\#\ualph[1]=k=\#\ualph[1]^{\prime}$ and $\#\ualph[2]=l=\#\ualph[2]^{\prime}$.
Precisely, by this assignment we identify the 
(singular) Bott--Samelson bimodule $\psplit{}{k,l}\pmerge{k,l}{k+l}$ with the 
following quotient of the (shifted) polynomial 
ring generated by the elementary symmetric functions in these alphabets:
\begin{gather}
\qpar^{\tm kl} 
\K\big[
\efunc_{r}(\ualph[1]),\efunc_{r}(\ualph[1]^{\prime}),
\efunc_{s}(\ualph[2]),\efunc_{s}(\ualph[2]^{\prime})
\big]
\Big/
\big(\efunc_{t}(\ualph[1]\cup\ualph[2])-\efunc_{t}(\ualph[1]^{\prime}\cup\ualph[2]^{\prime}) 
\big).
\end{gather}
Here $r,s,t$ are indices ranging $1\leq r\leq k$, $1\leq s\leq l$ and $1\leq t\leq k+l$
(\ie we slightly abuse notation and let $\efunc_{r}(\ualph[1])$ denote 
$\efunc_{1}(\ualph[1]),\ldots,\efunc_{k}(\ualph[1])$, \etc).
The latter is quasi-isomorphic to the object in $\dcat{\bimodq{\rring[k,l]}}$ given by the 
dg algebra
\begin{gather}
\morstuff{K}:=
\qpar^{\tm kl} 
\K\big[
\efunc_{r}(\ualph[1]),\efunc_{r}(\ualph[1]^{\prime}),
\efunc_{s}(\ualph[2]),\efunc_{s}(\ualph[2]^{\prime})
\big]\otimes_{\K}
\extalg\{\theta_{t}\},
\end{gather}
where $\qdeg[\apar\qpar](\theta_{t})=(-1,2t)$ 
and $d(\theta_{t})=\efunc_{t}(\ualph[1]\cup\ualph[2])
-\efunc_{t}(\ualph[1]^{\prime}\cup\ualph[2]^{\prime})$.
Computing partial trace then gives that
\begin{gather}
\pfunctor_{k,l}(\morstuff{K})
\ouriso[\cong]
\apar^{l}
\qpar^{\tm kl}
\qpar^{\tm l(l\tp 1)}
\K\big[
\efunc_{r}(\ualph[1]),\efunc_{r}(\ualph[1]^{\prime}),
\efunc_{s}(\ualph[2]),\efunc_{s}(\ualph[2]^{\prime})
\big]\otimes_{\K}
\extalg\{\theta_{t},\xi_{s}\}
\end{gather}
where $\qdeg[\apar\qpar](\xi_{s})=(-1,2s)$ and $d(\xi_{s})=\efunc_{s}(\ualph[2])-\efunc_{s}(\ualph[2]^{\prime})$.

Since the right-hand side of \eqref{eq:preskein-1} is quasi-isomorphic to a direct sum of copies of the Koszul complex 
associated to the elements $\efunc_{r}(\ualph[1])-\efunc_{t}(\ualph[1])^{\prime}$, 
we now aim to change variables in $\pfunctor_{k,l}(\morstuff{K})$, with the hope of identifying it as such.
Note that
\begin{gather}\label{eq:motivation}
\begin{aligned}
d(\theta_{t}) 
=&\efunc_{t}(\ualph[1]\cup\ualph[2])-\efunc_{t}(\ualph[1]^{\prime}\cup\ualph[2]^{\prime})
=
{%\textstyle
\sum_{j=0}^{t}}\,\efunc_{t-j}(\ualph[1])\efunc_{j}(\ualph[2])-
{%\textstyle
\sum_{j=0}^{t}}\,\efunc_{t-j}(\ualph[1]^{\prime})\efunc_{j}(\ualph[2]^{\prime}) 
\\
=& 
\efunc_{t}(\ualph[1])-\efunc_{t}(\ualph[1]^{\prime})+ 
{%\textstyle
\sum_{j=1}^{t}}\,\efunc_{t-j}(\ualph[1]) 
\big(\efunc_{j}(\ualph[2])-\efunc_{j}(\ualph[2]^{\prime})\big)
%\\
%&
+
{%\textstyle
\sum_{j=1}^{t}}\,
\big(\efunc_{t-j}(\ualph[1])-\efunc_{t-j}(\ualph[1]^{\prime})\big)
\efunc_{j}(\ualph[2]^{\prime}) 
\\
=& 
\efunc_{t}(\ualph[1])-\efunc_{t}(\ualph[1]^{\prime})+
{%\textstyle
\sum_{j=1}^{t}}\,\efunc_{t-j}(\ualph[1])d(\xi_{j})
%\\
%&
+
{%\textstyle
\sum_{j=1}^{t}}\,
\big(\efunc_{t-j}(\ualph[1])-\efunc_{t-j}(\ualph[1]^{\prime})\big) 
\efunc_{j}(\ualph[2]^{\prime}).
\end{aligned}
\end{gather}
This suggests that we recursively define
\begin{gather}
\Theta_{t}:=
\theta_{t}-{%\textstyle
\sum_{j=0}^{t}}\,\efunc_{t-j}(\ualph[1])\xi_{j} 
-{%\textstyle
\sum_{j=0}^{t}}\,\Theta_{t-j}\efunc_{j}(\ualph[2]^{\prime}).
\end{gather}
By \eqref{eq:motivation}, this gives
\begin{gather}
d(\Theta_{t})=\efunc_{t}(\ualph[1])-\efunc_{t}(\ualph[1]^{\prime}).
\end{gather}
and, in particular, $d(\Theta_{t})=0$ for $t>k$.

It then follows that we have quasi-isomorphisms
\begin{gather}
\begin{aligned}
\pfunctor_{k,l}(\morstuff{K})
&\ouriso[\cong]
\apar^{l}\qpar^{\tm l(k\tp l\tp 1)} 
\K\big[
\efunc_{r}(\ualph[1]),\efunc_{r}(\ualph[1]^{\prime}),
\efunc_{s}(\ualph[2]),\efunc_{s}(\ualph[2]^{\prime})
\big]\otimes_{\K}
\extalg\{\theta_{t},\xi_{s}\} 
\\
&\ouriso[\cong]
\apar^{l}\qpar^{\tm l(k\tp l\tp 1)} 
\K\big[
\efunc_{r}(\ualph[1]),\efunc_{r}(\ualph[1]^{\prime}),
\efunc_{s}(\ualph[2])
\big]\otimes_{\K}
\extalg\{\theta_{t}\} 
\\
&\ouriso[\cong] 
\apar^{l}\qpar^{\tm l(k\tp l\tp 1)} 
\rring[k]\otimes_{\K}
\K\big[\efunc_{s}(\ualph[2])\big]\otimes_{\K}
\extalg\{\Theta_{b}\},
\end{aligned}
\end{gather}
where, in this last equation, the index $b$ ranges from
$k+1,\ldots,k+l$.

This implies that $\pfunctor_{k,l}(\morstuff{K})$ is quasi-isomorphic to a direct sum of
\begin{gather}
\apar^{l}\qpar^{\tm l(k\tp l\tp 1)} 
{%\textstyle
\prod_{i=1}^{l}}\,
\tfrac{1+\apar^{\tm 1}\qpar^{2(k\tp i)}}{1-\qpar^{2i}} 
=
{%\textstyle
\prod_{i=1}^{l}}\,
\tfrac{\qpar^{k}+\apar\qpar^{\tm k\tm 2i}}{1-\qpar^{2i}} 
\end{gather}
copies of $\rring[k]$, as desired.
\end{proof}

\begin{lemma}\label{lemma:vertextrace}
Let $\stuple=(k_{1},\dots,k_{r})$, $\stuple[J]=(k_{1},\dots,k_{r-1}+k_{r})$, 
$\morstuff{B}\in\dcat{\bimodq{\rring[{\stuple[J]}]\text{-}\rring[\stuple]}}$
and $\morstuff{C}\in\dcat{\bimodq{\rring[\stuple]\text{-}\rring[{\stuple[J]}]}}$,
then we have
\begin{gather}\label{eq:slide}
\begin{gathered}
\raisebox{.25cm}{$\begin{tikzpicture}[anchorbase,scale=.5,tinynodes]
	\draw[usualbs] (1,-.25) to[out=90,in=270] (1,.25);
	\draw[usualbs] (0,-.25) to[out=90,in=270] (0,.25);
	\draw[usualbs] (-1,-.25) to[out=90,in=270] (-1,.25);
	\draw[usualbs] (-2,-.25) to[out=90,in=270] (-2,.25);
	\draw[usualbs] (-1,1) to[out=90,in=270] (-1,1);
	\draw[usualbs] (-2,1) to[out=90,in=270] (-2,1);
	\draw[usualbs] (-1,1) to (-1,2);
	\draw[usualbs] (-2,1) to (-2,2);
	\draw[usualbs] (.5,1) to[out=90,in=270] (.5,1.5);
	\draw[usualbs] (.5,1.5) to[out=180,in=270] (1,1.75) to (1,2);
	\draw[usualbs] (.5,1.5) to[out=0,in=270] (0,1.75) to (0,2);
	\draw[black,fill=mygray,fill opacity=.35] (1.25,.25) rectangle (-2.25,1);
	\node at (-.5,.55) {$\morstuff{B}$};
	\node at (-1.5,0) {$\text{...}$};
	\node at (-1.5,1.75) {$\text{...}$};
	\draw[cusual] (1,-.25) to[out=270,in=180] (1.5,-.75) to[out=0,in=270] (2,-.25) 
	to (2,.5) node[right,xshift=-3pt]{$k_{r}$} to (2,2) 
	to[out=90,in=0] (1.5,2.5) to[out=180,in=90] (1,2);
	\draw[cusual] (0,-.25) to[out=270,in=180] (1.5,-1.25) to[out=0,in=270] (3,-.25) 
	to (3,.5) node[right,xshift=-3pt]{$k_{r{-}1}$} to (3,2) 
	to[out=90,in=0] (1.5,3) to[out=180,in=90] (0,2);
	\node at (0,-.75) {$\phantom{a}$};
	\node at (0,3) {$\phantom{a}$};
\end{tikzpicture}$}
\ouriso
\qpar^{2k_{r{-}1}k_r}
\raisebox{.1cm}{$\begin{tikzpicture}[anchorbase,scale=.5, tinynodes]
	\draw[usualbs] (-1,-.625) to[out=90,in=270] (-1,.25);
	\draw[usualbs] (-2,-.625) to[out=90,in=270] (-2,.25);
	\draw[usualbs] (-1,1) to[out=90,in=270] (-1,1);
	\draw[usualbs] (-2,1) to[out=90,in=270] (-2,1);
	\draw[usualbs] (-1,1) to (-1,1.625);
	\draw[usualbs] (-2,1) to (-2,1.625);
	\draw[usualbs] (.5,1) to[out=90,in=270] (.5,1.5);
	\draw[usualbs] (.5,-.5) to (.5,-.125);
	\draw[usualbs] (.5,-.125) to[out=0,in=270] (1,.25);
	\draw[usualbs] (.5,-.125) to[out=180,in=270] (0,.25);
	\draw[black,fill=mygray,fill opacity=.35] (1.25,.25) rectangle (-2.25,1);
	\node at (-.5,.55) {$\morstuff{B}$};
	\node at (-1.5,-.375) {$\text{...}$};
	\node at (-1.5,1.375) {$\text{...}$};
	\draw[cusual] (.5,-.5) to [out=270,in=180] 
	node[right,xshift=-20pt,yshift=-7.5pt]{$k_{r{-}1}{+}k_{r}$}
	(1,-1) to[out=0,in=270] (1.5,-.5) to (1.5,.5) to (1.5,1.5) 
	to[out=90,in=0] (1,2) to[out=180,in=90] (.5,1.5);
	\node at (0,-.75) {$\phantom{a}$};
	\node at (0,3) {$\phantom{a}$};
\end{tikzpicture}$}
\\
\raisebox{-.2cm}{$\begin{tikzpicture}[anchorbase,scale=.5,yscale=-1, tinynodes]
	\draw[usualbs] (1,-.25) to[out=90,in=270] (1,.25);
	\draw[usualbs] (0,-.25) to[out=90,in=270] (0,.25);
	\draw[usualbs] (-1,-.25) to[out=90,in=270] (-1,.25);
	\draw[usualbs] (-2,-.25) to[out=90,in=270] (-2,.25);
	\draw[usualbs] (-1,1) to[out=90,in=270] (-1,1);
	\draw[usualbs] (-2,1) to[out=90,in=270] (-2,1);
	\draw[usualbs] (-1,1) to (-1,2);
	\draw[usualbs] (-2,1) to (-2,2);
	\draw[usualbs] (.5,1) to[out=90,in=270] (.5,1.5);
	\draw[usualbs] (.5,1.5) to[out=180,in=270] (1,1.75) to (1,2);
	\draw[usualbs] (.5,1.5) to[out=0,in=270] (0,1.75) to (0,2);
	\draw[black,fill=mygray,fill opacity=.35] (1.25,.25) rectangle (-2.25,1);
	\node at (-.5,.6) {$\morstuff{C}$};
	\node at (-1.5,0) {$\text{...}$};
	\node at (-1.5,1.75) {$\text{...}$};
	\draw[cusual] (1,-.25) to[out=270,in=180] (1.5,-.75) to[out=0,in=270] (2,-.25) 
	to (2,.5) node[right,xshift=-3pt,yshift=-1pt]{$k_{r}$} to (2,2) 
	to[out=90,in=0] (1.5,2.5) to[out=180,in=90] (1,2);
	\draw[cusual] (0,-.25) to[out=270,in=180] (1.5,-1.25) to[out=0,in=270] (3,-.25) 
	to (3,.5) node[right,xshift=-3pt,yshift=-1pt]{$k_{r{-}1}$} to (3,2) 
	to[out=90,in=0] (1.5,3) to[out=180,in=90] (0,2);
	\node at (0,-.75) {$\phantom{a}$};
	\node at (0,3) {$\phantom{a}$};
\end{tikzpicture}$}
\ouriso
\qpar^{2k_{r{-}1}k_{r}}
\raisebox{-.25cm}{$\begin{tikzpicture}[anchorbase,scale=.5,yscale=-1, tinynodes]
	\draw[usualbs] (-1,-.625) to[out=90,in=270] (-1,.25);
	\draw[usualbs] (-2,-.625) to[out=90,in=270] (-2,.25);
	\draw[usualbs] (-1,1) to[out=90,in=270] (-1,1);
	\draw[usualbs] (-2,1) to[out=90,in=270] (-2,1);
	\draw[usualbs] (-1,1) to (-1,1.625);
	\draw[usualbs] (-2,1) to (-2,1.625);
	\draw[usualbs] (.5,1) to[out=90,in=270] (.5,1.5);
	\draw[usualbs] (.5,-.5) to (.5,-.125);
	\draw[usualbs] (.5,-.125) to[out=0,in=270] (1,.25);
	\draw[usualbs] (.5,-.125) to[out=180,in=270] (0,.25);
	\draw[black,fill=mygray,fill opacity=.35] (1.25,.25) rectangle (-2.25,1);
	\node at (-.5,.60) {$\morstuff{C}$};
	\node at (-1.5,-.375) {$\text{...}$};
	\node at (-1.5,1.375) {$\text{...}$};
	\draw[cusual] (.5,-.5) to [out=270,in=180] (1,-1) to[out=0,in=270] (1.5,-.5) 
	to (1.5,.5) to (1.5,1.5) to[out=90,in=0] 
	node[right,xshift=-30pt, yshift=-7.5pt]{$k_{r{-}1}{+}k_{r}$} (1,2) 
	to[out=180,in=90] (.5,1.5);
	\node at (0,-.75) {$\phantom{a}$};
	\node at (0,3) {$\phantom{a}$};
\end{tikzpicture}$}
\end{gathered}
\end{gather}
\end{lemma}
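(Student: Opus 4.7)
The plan is to adapt the arguments of \cite[Section 3]{Ho-young-torus} to the colored, singular Soergel bimodule setting. First, I would translate each diagram in \eqref{eq:slide} into an algebraic expression in terms of the partial trace functors $\pfunctor$ from \fullref{lemma:partial-trace}. For the first equation, the LHS corresponds to $\pfunctor_{\stuple^{-}}\pfunctor_{\stuple}\bigl(\psplit{k_{r-1}+k_r}{k_{r-1},k_r}\otimes_{\rring[{\stuple[J]}]}\morstuff{B}\bigr)$ (closing the last two strands of $\stuple$ one at a time), while the RHS equals $\pfunctor_{\stuple[J]}\bigl(\morstuff{B}\otimes_{\rring[\stuple]}\psplit{k_{r-1}+k_r}{k_{r-1},k_r}\bigr)$ (closing the single merged strand of $\stuple[J]$). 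Both objects live in $\dcat{\bimodq{\rring[{\stuple[J]^{-}}]}}$, since $\stuple[J]^{-}=(k_1,\dots,k_{r-2})=\stuple^{--}$. The second equation has the same form with the roles of source and target of $\morstuff{C}$ exchanged.

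Next, I would use the locality property (the third part of \fullref{lemma:partial-trace}) to factor out the components $k_1,\dots,k_{r-2}$ of the parabolic subgroup, which are unaffected by the merges, splits, and closures happening on the last two positions. This reduces the statement to the case $\stuple=(k,l)$ with $k=k_{r-1}$ and $l=k_r$, $\stuple[J]=(k+l)$, and both sides living in $\dcat{\veck}$, with $\morstuff{B}$ a $(\rring[k+l],\rring[k,l])$-bimodule (and similarly for $\morstuff{C}$ in the second equation).

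With this reduction in hand, I would compute both sides explicitly via Koszul resolutions, following the blueprint of the proof of \fullref{lemma:skein-I}. The LHS becomes a dg module involving exterior generators $\eta_i$ (for $1\leq i\leq k$) and $\xi_j$ (for $1\leq j\leq l$) with differentials $d(\eta_i)=\efunc_i(\ualph[1])\otimes 1-1\otimes\efunc_i(\ualph[1])$ and $d(\xi_j)=\efunc_j(\ualph[2])\otimes 1-1\otimes\efunc_j(\ualph[2])$, all tensored with the module $\morstuff{B}$ (restricted along $\rring[k+l]\hookrightarrow\rring[k,l]$ on one side). The RHS produces a dg module with exterior generators $\Theta_t$ (for $1\leq t\leq k+l$) satisfying $d(\Theta_t)=\efunc_t(\ualph[1]\cup\ualph[2])\otimes 1-1\otimes\efunc_t(\ualph[1]\cup\ualph[2])$, tensored with $\morstuff{B}$. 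A recursive change of variables $\Theta_t:=\eta_t+\sum_{j=1}^{t}\efunc_{t-j}(\ualph[1])\xi_j+\sum_{j=1}^{t}\Theta_{t-j}\efunc_j(\ualph[2])$, exactly analogous to the one preceding \eqref{eq:motivation}, identifies the two dg modules up to a grading shift.

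The $\qpar$-shift $\qpar^{2k_{r-1}k_r}$ arises from a bookkeeping of the grading shifts that appear in the two Koszul resolutions: the exterior generators in the unmerged resolution carry shifts $\apar\qpar^{-2i}$ and $\apar\qpar^{-2j}$, those in the merged resolution carry $\apar\qpar^{-2t}$, and the difference together with the intrinsic shift $\qpar^{-kl}$ appearing in the merge bimodule via \eqref{eq:indres} collects to $\qpar^{2kl}=\qpar^{2k_{r-1}k_r}$ (equivalently, $\qpar^{2(\ell(\stuple[J])-\ell(\stuple))}$ restricted to the last two blocks). The main technical obstacle is the explicit change of variables itself: unlike the unparametrized computation in \fullref{lemma:skein-I}, the bimodule $\morstuff{B}$ (or $\morstuff{C}$) enters as an external parameter, so one must verify that the recursive definition of the $\Theta_t$ is compatible with the $(\rring[k+l],\rring[k+l])$-bimodule structure induced from $\morstuff{B}$, and that the resulting map of dg modules is genuinely a quasi-isomorphism and not merely an isomorphism of underlying graded vector spaces.
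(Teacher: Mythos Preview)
Your overall framing matches the paper's: translate the diagrams into iterated partial traces, reduce to the case $r=2$ (equivalently, use the locality of \fullref{lemma:partial-trace} to strip off the first $r-2$ blocks), and compute both sides via Koszul resolutions. The gap is in the comparison step.

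In the $r=2$ case, the left-hand side of the first isomorphism is the double partial trace of
\[
\psplit{k+l}{k,l}\otimes_{\rring[k+l]}\morstuff{B}\;=\;\rring[k,l]\otimes_{\rring[k+l]}\morstuff{B},
\]
which is \emph{induction} of the left $\rring[k+l]$-module $\morstuff{B}$ along $\rring[k+l]\hookrightarrow\rring[k,l]$, not restriction. As a $\K$-vector space it is strictly larger than $\morstuff{B}$ (by a factor equal to the rank of $\rring[k,l]$ over $\rring[k+l]$). The right-hand side, on the other hand, is the partial trace of $\morstuff{B}\otimes_{\rring[k,l]}\psplit{k+l}{k,l}\cong\morstuff{B}$ with right action restricted to $\rring[k+l]$. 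So after only resolving the closure, your two complexes carry $k+l$ exterior generators each but sit over \emph{different} underlying modules, and no change of basis in the exterior algebra can identify them. The recursive substitution you borrow from \fullref{lemma:skein-I} worked there because the underlying module was itself a polynomial ring whose extra generators could be absorbed; with an arbitrary $\morstuff{B}$ that mechanism is unavailable.

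The paper's remedy is to pass to a Koszul resolution of the \emph{internal} tensor product on each side as well: on the left, replace $\otimes_{\rring[k+l]}$ by $\otimes_{\K}$ at the cost of $k+l$ further exterior generators $\theta_{t}$ with $d(\theta_{t})=\efunc_{t}(\ualph)-\efunc_{t}(\ualph^{\prime})$; on the right, replace $\otimes_{\rring[k,l]}$ by $\otimes_{\K}$ at the cost of $k+l$ generators $\Xi_{r},Z_{s}$. After this step both sides have underlying module $\rring[k,l]\otimes_{\K}\morstuff{B}$ tensored with an exterior algebra on $2(k+l)$ generators, with differentials that match up directly upon identifying $(\xi,\zeta,\theta)\leftrightarrow(\Xi,Z,\Theta)$. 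No change of variables is needed, and the $\qpar^{2kl}$ shift drops out of the comparison $\qpar^{-k^{2}-k-l^{2}-l}$ versus $\qpar^{-(k+l)(k+l+1)+2kl}$.
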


\begin{proof}
We show the first quasi-isomorphism in \eqref{eq:slide}, as the proof of the second is similar.
The idea for the 
proof is easy: simply pass to a Koszul 
resolutions at the places where the tensor products take place.  
Formally, let us consider the case when $r=2$ now,
as the general proof differs only in requiring more cumbersome notation. 
The left-hand side of the first isomorphism in \eqref{eq:slide} is
\begin{gather}
\pfunctor_{\stuple^{-}}\big( 
\pfunctor_{\stuple}( 
\psplit{\stuple[J]}{\stuple}
\otimes_{\rring[{\stuple[J]}]} 
\morstuff{B}) \big) 
\cong 
\apar^{k_{1}+k_{2}}\qpar^{\tm k_{1}^{2}\tm k_{1}\tm k_{2}^{2}\tm k_{2}} 
\big( 
\psplit{\stuple[J]}{\stuple} 
\otimes_{\rring[{\stuple[J]}]} {_{\stuple[J]}\morstuff{B}_{\stuple^{\prime}}} 
\big) 
\otimes_{\K}\extalg\{\xi_{r},\zeta_{s}\},
\end{gather}
where $1\leq r\leq k_{1}$, $1\leq s\leq k_{2}$ and with differential given by 
$d(\xi_{r})=\efunc_{r}(\ualph[1]) 
-\efunc_{r}(\ualph[1]^{\prime})$, 
$d(\zeta_{s})=\efunc_{s}(\ualph[2]) 
-\efunc_{s}(\ualph[2]^{\prime})$
for alphabets of size 
$|\ualph[1]|=k_{1}=|\ualph[1]^{\prime}|$ 
and $|\ualph[2]|=k_{2}=|\ualph[2]^{\prime}|$, respectively. 
Here, polynomials in the relevant
alphabets act as indicated by the subscripts on the bimodules. 
Passing to a Koszul resolution of the diagonal 
$\rring[{\stuple[J]}]$-bimodule, we see this is quasi-isomorphic to
\begin{gather}\label{eq:topS}
\apar^{k_{1}+k_{2}}\qpar^{\tm k_{1}^{2}\tm k_{1}\tm k_{2}^{2}\tm k_{2}} 
\big( 
\psplit{\stuple[J]}{\stuple}\otimes_{\K}{_{\stuple[J]^{\prime}}\morstuff{B}_{\stuple^{\prime}}} 
\big)\otimes_{\K}\extalg\{\xi_{r},\zeta_{s},\theta_{t}\},
\end{gather}
where here (additionally) $1\leq t \leq k_{1}+k_{2}$, $d(\theta_{t})=\efunc_{t}(\ualph[]) 
-\efunc_{t}(\ualph[]^{\prime})$, and $|\ualph[]|=k_{1}+k_{2}=|\ualph[]^{\prime}|$.

Similarly, the right-hand side is
\begin{gather}
\qpar^{2k_{1}k_{2}}\pfunctor_{\stuple[J]}
\big(\morstuff{B} 
\otimes_{\rring[\stuple]} 
\psplit{\stuple[J]}{\stuple}\big) 
\cong 
\apar^{k_{1}+k_{2}}\qpar^{2k_{1}k_{2}}
\qpar^{\tm(k_{1}+k_{2})(k_{1}+k_{2}+1)} 
\big( 
{_{\stuple[J^{\prime}]}\morstuff{B}_{\stuple}} 
\otimes_{\rring[\stuple]}\psplit{\stuple[J]}{\stuple} 
\big) 
\otimes_{\K}\extalg\{\Theta_{t}\},
\end{gather}
where $1\leq t \leq k_{1}+k_{2}$, 
$d(\Theta_{t})=\efunc_{t}(\yalph[]^{\prime}) 
-\efunc_{t}(\yalph[])$, and $|\yalph[]|=k_{1}+k_{2}=|\yalph[]^{\prime}|$.
Passing to a Koszul resolution of $\rring[\stuple]$ gives that this is quasi-isomorphic to
\begin{gather}\label{eq:bottomS}
\apar^{k_{1}+k_{2}}\qpar^{\tm(k_{1}+k_{2})(k_{1}+k_{2}+1)+2k_{1}k_{2}} 
\big(
{_{\stuple[J^{\prime}]}\morstuff{B}_{\stuple^{\prime}}} 
\otimes_{\K} 
\psplit{\stuple[J]}{\stuple}\big) 
\otimes_{\K}\extalg\{\Theta_{t},\Xi_{r},Z_{s}\},
\end{gather}
with $1\leq r \leq k_{1}$, $1\leq s \leq k_{2}$ and differential given by 
$d(\Xi_{r})=\efunc_{r}(\yalph[1]^{\prime}) 
-\efunc_{r}(\yalph[1])$, 
$d(Z_{s}) 
=\efunc_{s}(\yalph[2]^{\prime})-\efunc_{s}(\yalph[2])$
for alphabets of size $|\yalph[1]|=k_{1}=|\yalph[1]^{\prime}|$ 
and $|\yalph[2]|=k_{2}=|\yalph[2]^{\prime}|$. 
The result now follows by comparing \eqref{eq:topS} with \eqref{eq:bottomS}.
\end{proof}

\begin{lemma}\label{lemma:skein-II}
For $k\geq 0$, there are $\apar\tpar\qpar$-degree $0$ isomorphisms
\begin{gather}\label{eq:skein-a}
\begin{tikzpicture}[anchorbase,scale=.5,smallnodes]
	\draw[usualbs,crossline] (1,0) to[out=90,in=270] (0,1.5) node[above,yshift=-2pt]{$k$};
	\draw[usualbs,crossline] (0,0) node[below]{$k$} to[out=90,in=270] (1,1.5);
	\draw[cusual] (1,0) to[out=270,in=180] (1.5,-.5) to[out=0,in=270] (2,0) 
	to[out=90,in=270] (2,1.5) to[out=90,in=0] (1.5,2) to[out=180,in=90] (1,1.5);
\end{tikzpicture}
\ouriso[\simeq]
\tpar^{k}\qpar^{\tm k}
\begin{tikzpicture}[anchorbase,scale=.5,smallnodes]
	\draw[usualbs,crossline] (2,0) node[below]{$k$} to[out=90,in=270] (2,1.5) node[above,yshift=-2pt]{$k$};
\end{tikzpicture}
\quad\&\quad
\begin{tikzpicture}[anchorbase,scale=.5,smallnodes]
	\draw[usualbs,crossline] (0,0) node[below]{$k$} to[out=90,in=270] (1,1.5);
	\draw[usualbs,crossline] (1,0) to[out=90,in=270] (0,1.5) node[above,yshift=-2pt]{$k$};
	\draw[cusual] (1,0) to[out=270,in=180] (1.5,-.5) to[out=0,in=270] (2,0) 
	to[out=90,in=270] (2,1.5) to[out=90,in=0] (1.5,2) to[out=180,in=90] (1,1.5);
\end{tikzpicture}
\ouriso[\simeq]
\apar^{k}\qpar^{\tm 2k^{2}\tm k}
\begin{tikzpicture}[anchorbase,scale=.5,smallnodes]
	\draw[usualbs,crossline] (2,0) node[below]{$k$} to[out=90,in=270] (2,1.5) node[above,yshift=-2pt]{$k$};
\end{tikzpicture}
\end{gather}
\end{lemma}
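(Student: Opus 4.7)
The approach is to expand $\khbracket{\atgen^{\pm}}{k,k}$ via the Rickard--Rouquier description \eqref{eq:colored-rouquier}, apply the partial Hochschild trace $\pfunctor_{(k,k)}$ term-by-term, and collapse the resulting complex via Gaussian elimination. By \eqref{eq:colored-rouquier}, the positive-crossing complex $\khbracket{\atgen}{k,k}$ has $k+1$ terms, the $i$-th of which is a singular Bott--Samelson bimodule built from ladder rungs of color $k-i$ between two $k$-labeled strands with middle colors $(2k-i, i)$, shifted by $\tpar^{i}\qpar^{-i}$; the negative-crossing complex consists of the same collection of bimodules, but with differential in the opposite direction and shifts $\tpar^{-i}\qpar^{i}$.

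First, I apply $\pfunctor_{(k,k)}$ to each term. Using \fullref{lemma:vertextrace} (slide) repeatedly, I transfer merges and splits across the closure so that, modulo a residual configuration on the left $k$-strand, each term contains a closed merge-split pair on the right, which \fullref{lemma:skein-I} then evaluates. Concretely, after this reduction the $i$-th term becomes a direct sum of copies of the identity bimodule $\rring_{k}$, with multiplicity governed by a factor of the form $\prod_{j=1}^{i}\frac{\qpar^{k-i}+\apar\qpar^{i-k-2j}}{1-\qpar^{2j}}$, up to global $\qpar$-shifts that I track separately.

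Second, I collapse the resulting complex via Gaussian elimination. The key observation is that each rational factor above splits canonically along the $\apar$-grading into a non-$\apar$ summand and an $\apar$-divisible summand, and that the induced differentials between consecutive terms contain isomorphism components relative to this splitting. Pairwise cancellation of these isomorphism components collapses the complex from $k+1$ terms to a single surviving summand. For the positive crossing, the non-$\apar$ summands cancel from the bottom upward, leaving the $\apar$-free part of the top ($\tpar^{k}\qpar^{-k}$) term, which accounts for the claimed $\tpar^{k}\qpar^{-k}\rring_{k}$. For the negative crossing, the direction of the differential is reversed, so the non-$\apar$ summands cancel from the top downward, leaving the maximal-$\apar$ part of the bottom term; the accumulated $\qpar$-shifts from $\prod_{j=1}^{k}\frac{1+\apar\qpar^{-2j}}{1-\qpar^{2j}}$ combine with those in \eqref{eq:colored-rouquier} to yield the claimed $\apar^{k}\qpar^{-2k^{2}-k}\rring_{k}$.

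The main obstacle is the explicit Gaussian elimination: one must identify which components of the induced differentials are genuine isomorphisms, and verify that the telescoping is consistent with the $\apar\qpar$-bookkeeping coming from \fullref{lemma:skein-I}. In essence, this amounts to a colored $q$-identity describing how the differentials in \eqref{eq:colored-rouquier} interact with the splitting $\qpar^{k-i}+\apar\qpar^{i-k-2j}$ on each factor, and is a singular analogue of the Markov~II invariance computation for colored triply-graded homology in $\thesphere$ carried out in \cite{Ho-young-torus}.
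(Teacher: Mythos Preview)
Your outline is plausible in spirit but incomplete at exactly the point you flag as ``the main obstacle.'' Expanding $\khbracket{\atgen^{\pm}}{k,k}$ into its $k{+}1$ terms, tracing each via \fullref{lemma:skein-I} and \fullref{lemma:vertextrace}, and then asserting that Gaussian elimination telescopes everything down to one surviving summand is precisely where the work lies. You have not identified which components of the induced differentials are isomorphisms, nor verified that the cancellations are compatible with the $\apar\qpar$-decomposition you describe. In the colored case this bookkeeping is genuinely delicate: the differentials in \eqref{eq:diff-rouquier} are composites of Frobenius (co)units and associativity maps, and one must check that after applying $\pfunctor_{(k,k)}$ these restrict to identities on the summands you wish to cancel. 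The reference to \cite{Ho-young-torus} covers only the uncolored $k=1$ case, and the passage to general $k$ is not automatic.

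The paper avoids this difficulty altogether by arguing inductively on $k$. The base case $k=1$ is handled exactly as you suggest (and as in \cite{Ho-young-torus}): two-term complexes, explicit differentials, one Gaussian elimination. For the inductive step, the paper inserts a digon on the $k$-strand (contributing a factor of $\tfrac{\qpar^{k}-\qpar^{-k}}{\qpar-\qpar^{-1}}$), uses the pitchfork relation \eqref{eq:pitchfork} and \fullref{lemma:vertextrace} to split the traced $k$-strand into a traced $(k{-}1)$-strand and a traced $1$-strand, applies the inductive hypothesis to the $(k{-}1)$-piece and the base case to the $1$-piece, and then cancels the digon factor via Krull--Schmidt. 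No Gaussian elimination beyond $k=1$ is ever required. This is a genuinely different and more robust route: it trades your term-by-term analysis of a length-$(k{+}1)$ complex for a clean reduction to the two-term case.
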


This result, which implies the invariance of the usual colored triply-graded link homology under stabilization, 
is well-known, and follows from the equivalence of the definition in terms of singular Soergel bimodules with the 
constructions in \cite{WeWi-colored-homfly} and \cite{Ca-remarks-triply-graded}.
We give the (well-known) argument for the sake of completion, 
and to determine the exact degree shifts (given our grading conventions for the Rickard--Rouquier complexes) 
so that we may be precise in \eqref{eq:colored-homfly} below.

\begin{proof}
We induct on $k$, starting with $k=1$.
By \fullref{example:uncolored} and \eqref{eq:preskein-1}, we have
\begin{gather}
\begin{tikzpicture}[anchorbase,scale=.5,smallnodes]
	\draw[usualbs,crossline] (1,0) to[out=90,in=270] (0,1.5) node[above,yshift=-2pt]{$1$};
	\draw[usualbs,crossline] (0,0) node[below]{$1$} to[out=90,in=270] (1,1.5);
	\draw[cusual] (1,0) to[out=270,in=180] (1.5,-.5) to[out=0,in=270] (2,0) 
	to[out=90,in=270] (2,1.5) to[out=90,in=0] (1.5,2) to[out=180,in=90] (1,1.5);
\end{tikzpicture}
\ouriso[\simeq]
\tfrac{\qpar+\apar\qpar^{\tm 3}}{1-\qpar^{2}} 
\begin{tikzpicture}[anchorbase,scale=.5,tinynodes]
	\draw[usualbs] (0,0) node[below]{$1$} to (0,2) node[above,yshift=-2pt]{$1$};
\end{tikzpicture}
\longrightarrow
\tfrac{\tpar\qpar^{\tm 1}+\apar\tpar\qpar^{\tm 3}}{1-\qpar^{2}} 
\begin{tikzpicture}[anchorbase,scale=.5,tinynodes]
	\draw[usualbs] (0,0) node[below]{$1$} to (0,2) node[above,yshift=-2pt]{$1$};
\end{tikzpicture}
\quad\&\quad
\begin{tikzpicture}[anchorbase,scale=.5,tinynodes]
	\draw[usualbs,crossline] (0,0) node[below]{$1$} to[out=90,in=270] (1,1.5);
	\draw[usualbs,crossline] (1,0) to[out=90,in=270] (0,1.5) node[above,yshift=-2pt]{$1$};
	\draw[cusual] (1,0) to[out=270,in=180] (1.5,-.5) to[out=0,in=270] (2,0) 
	to[out=90,in=270] (2,1.5) to[out=90,in=0] (1.5,2) to[out=180,in=90] (1,1.5);
\end{tikzpicture}
\ouriso[\simeq]
\tfrac{\tpar^{\tm 1}\qpar+\apar\tpar^{\tm 1}\qpar^{\tm 1}}{1-\qpar^{2}} 
\begin{tikzpicture}[anchorbase,scale=.5,tinynodes]
	\draw[usualbs] (0,0) node[below]{$1$} to (0,2) node[above,yshift=-2pt]{$1$};
\end{tikzpicture}
\longrightarrow
\tfrac{\qpar+\apar\qpar^{\tm 3}}{1-\qpar^{2}} 
\begin{tikzpicture}[anchorbase,scale=.5,tinynodes]
	\draw[usualbs] (0,0) node[below]{$1$} to (0,2) node[above,yshift=-2pt]{$1$};
\end{tikzpicture}
\end{gather}
The proof of \cite[Proposition 3.10]{Ho-young-torus} identifies the differentials in these complexes, 
giving homotopy equivalences
\begin{gather}
\begin{tikzpicture}[anchorbase,scale=.5,smallnodes]
	\draw[usualbs,crossline] (1,0) to[out=90,in=270] (0,1.5) node[above,yshift=-2pt]{$1$};
	\draw[usualbs,crossline] (0,0) node[below]{$1$} to[out=90,in=270] (1,1.5);
	\draw[cusual] (1,0) to[out=270,in=180] (1.5,-.5) to[out=0,in=270] (2,0) 
	to[out=90,in=270] (2,1.5) to[out=90,in=0] (1.5,2) to[out=180,in=90] (1,1.5);
\end{tikzpicture}
\ouriso[\simeq]
\tpar\qpar^{\tm 1} 
\begin{tikzpicture}[anchorbase,scale=.5,tinynodes]
	\draw[usualbs] (0,0) node[below]{$1$} to (0,2) node[above,yshift=-2pt]{$1$};
\end{tikzpicture}
\quad\&\quad
\begin{tikzpicture}[anchorbase,scale=.5,smallnodes]
	\draw[usualbs,crossline] (0,0) node[below]{$1$} to[out=90,in=270] (1,1.5);
	\draw[usualbs,crossline] (1,0) to[out=90,in=270] (0,1.5) node[above,yshift=-2pt]{$1$};
	\draw[cusual] (1,0) to[out=270,in=180] (1.5,-.5) to[out=0,in=270] (2,0) 
	to[out=90,in=270] (2,1.5) to[out=90,in=0] (1.5,2) to[out=180,in=90] (1,1.5);
\end{tikzpicture}
\ouriso[\simeq]
\apar\qpar^{\tm 3}
\begin{tikzpicture}[anchorbase,scale=.5,tinynodes]
	\draw[usualbs] (0,0) node[below]{$1$} to (0,2) node[above,yshift=-2pt]{$1$};
\end{tikzpicture}
\end{gather}
that follow from ``Gaussian elimination'' of all terms for which the $\apar\qpar$-degrees coincide.
For the inductive step, we compute, using \fullref{lemma:vertextrace} and \eqref{eq:pitchfork}, that
\begin{gather}
\begin{aligned}
\tfrac{\qpar^{k}-\qpar^{\tm k}}{\qpar-\qpar^{\tm 1}}
\begin{tikzpicture}[anchorbase,scale=.5,smallnodes]
	\draw[usualbs,crossline] (0,0) to[out=90,in=270] (1,1.5);
	\draw[usualbs,crossline] (1,0) to[out=90,in=270] (0,1.5) node[above,yshift=-2pt]{$k$};
	\draw[cusual] (1,0) to[out=270,in=180] (1.5,-.5) to[out=0,in=270] (2,0) 
	to[out=90,in=270] (2,1.5) to[out=90,in=0] (1.5,2) to[out=180,in=90] (1,1.5);
\end{tikzpicture} 
&\ouriso[\simeq]
\begin{tikzpicture}[anchorbase,scale=.5,tinynodes]
	\draw[usualbs,crossline] (0,0) to[out=90,in=270] (1,1.5);
	\draw[usualbs,crossline] (1,0) to[out=90,in=270] (0,1.5) node[above,yshift=-2pt]{$k$};
	\draw[usualbs] (0,-1.5) node[below]{$k$} to (0,-1) to [out=180,in=180] node[left,xshift=2pt]{$1$} (0,-.25) to (0,0);
	\draw[usualbs] (0,-1) to [out=0,in=0] (0,-.25);
	\draw[cusual] (1,0) to[out=270,in=180] (1.5,-.5) to[out=0,in=270] (2,0) 
	to[out=90,in=270] (2,1.5) to[out=90,in=0] (1.5,2) to[out=180,in=90] (1,1.5);
\end{tikzpicture}
\ouriso[\simeq]
\qpar^{\tm 2(k\tm 1)}
\begin{tikzpicture}[anchorbase,scale=.5,tinynodes]
	\draw[usualbs] (0,4) to[out=90,in=180] (.5,4.5) to (.5,5) node[above,yshift=-2pt]{$k$};
	\draw[usualbs] (1,4) to[out=90,in=0] (.5,4.5);
	\draw[usualbs] (2,4) to (2,5);
	\draw[usualbs] (3,4) to (3,5);
	\draw[usualbs,crossline] (0,1) to[out=90,in=270] (2,4);
	\draw[usualbs,crossline] (1,1) to[out=90,in=270] (3,4);
	\draw[usualbs,crossline] (2,1) to[out=90,in=270] (0,4);
	\draw[usualbs,crossline] (3,1) to[out=90,in=270] (1,4);
	\draw[usualbs] (.5,0) node[below]{$k$} to (.5,.5) to[out=180,in=270] (0,1);
	\draw[usualbs] (.5,.5) to[out=0,in=270] (1,1);
	\draw[usualbs] (2,0) to (2,1);
	\draw[usualbs] (3,0) to (3,1);
	\draw[cusual] (2,0) to[out=270,in=180] (3.5,-1) to[out=0,in=270] (5,0) to (5,2.5) node[left,xshift=2pt]{$1$}
	to[out=90,in=270] (5,5) to[out=90,in=0] (3.5,6) to[out=180,in=90] (2,5);
	\draw[cusual] (3,0) to[out=270,in=180] (3.5,-.5) to[out=0,in=270] (4,0) to (4,2.5) node[left,xshift=2pt]{$k{-}1$} 
	to[out=90,in=270] (4,5) to[out=90,in=0] (3.5,5.5) to[out=180,in=90] (3,5);
\end{tikzpicture}
\ouriso[\simeq]
\apar^{k\tm 1}\qpar^{\tm 2k^{2}\tp k\tp 1}
\begin{tikzpicture}[anchorbase,scale=.5,tinynodes]
	\draw[usualbs] (0,4) to[out=90,in=180] (.5,4.5) to (.5,5) node[above,yshift=-2pt]{$k$};
	\draw[usualbs] (1,4) to[out=90,in=0] (.5,4.5);
	\draw[usualbs] (2,4) to (2,5);
	\draw[usualbs] (0,3) to[out=90,in=270] (0,4);
	\draw[usualbs] (1,3) to[out=90,in=270] (2,4);
	\draw[usualbs,crossline] (2,3) to[out=90,in=270] (1,4);
	\draw[usualbs] (0,2) to[out=90,in=270] (1,3);
	\draw[usualbs,crossline] (1,2) to[out=90,in=270] (0,3);
	\draw[usualbs] (2,2) to[out=90,in=270] (2,3);
	\draw[usualbs] (0,1) to[out=90,in=270] (0,2);
	\draw[usualbs] (1,1) to[out=90,in=270] (2,2);
	\draw[usualbs,crossline] (2,1) to[out=90,in=270] (1,2);
	\draw[usualbs] (.5,0) node[below]{$k$} to (.5,.5) to[out=180,in=270] (0,1);
	\draw[usualbs] (.5,.5) to[out=0,in=270] (1,1);
	\draw[usualbs] (2,0) to (2,1);
	\draw[cusual] (2,0) to[out=270,in=180] (2.5,-.5) to[out=0,in=270] (3,0) to (3,2.5) node[left,xshift=2pt]{$1$} 
	to[out=90,in=270] (3,5) to[out=90,in=0] (2.5,5.5) to[out=180,in=90] (2,5);
\end{tikzpicture}
\\
&\ouriso[\simeq]
\apar^{k\tm 1}\qpar^{\tm 2k^{2}\tp k\tp 1}
\begin{tikzpicture}[anchorbase,scale=.5,tinynodes]
	\draw[usualbs] (0,4) to[out=90,in=180] (.5,4.5) to (.5,5) node[above,yshift=-2pt]{$k$};
	\draw[usualbs] (1,4) to[out=90,in=0] (.5,4.5);
	\draw[usualbs] (2,4) to (2,5);
	\draw[usualbs] (0,3) to[out=90,in=270] (1,4);
	\draw[usualbs,crossline] (1,3) to[out=90,in=270] (0,4);	
	\draw[usualbs] (2,3) to[out=90,in=270] (2,4);
	\draw[usualbs] (0,2) to[out=90,in=270] (0,3);
	\draw[usualbs] (1,2) to[out=90,in=270] (2,3);
	\draw[usualbs,crossline] (2,2) to[out=90,in=270] (1,3);	
	\draw[usualbs] (0,1) to[out=90,in=270] (1,2);
	\draw[usualbs,crossline] (1,1) to[out=90,in=270] (0,2);	
	\draw[usualbs] (2,1) to[out=90,in=270] (2,2);
	\draw[usualbs] (.5,0) node[below]{$k$} to (.5,.5) to[out=180,in=270] (0,1);
	\draw[usualbs] (.5,.5) to[out=0,in=270] (1,1);
	\draw[usualbs] (2,0) to (2,1);
	\draw[cusual] (2,0) to[out=270,in=180] (2.5,-.5) to[out=0,in=270] (3,0) to (3,2.5) node[left,xshift=2pt]{$1$} 
	to[out=90,in=270] (3,5) to[out=90,in=0] (2.5,5.5) to[out=180,in=90] (2,5);
\end{tikzpicture}
\ouriso[\simeq]
\apar^{k}\qpar^{\tm 2k^{2}\tp k\tm 2}
\begin{tikzpicture}[anchorbase,scale=.5,tinynodes]
	\draw[usualbs] (0,4) to[out=90,in=180] (.5,4.5) to (.5,5) node[above,yshift=-2pt]{$k$};
	\draw[usualbs] (1,4) to[out=90,in=0] (.5,4.5);
	\draw[usualbs] (0,3) to[out=90,in=270] (1,4);
	\draw[usualbs,crossline] (1,3) to[out=90,in=270] (0,4);
	\draw[usualbs] (0,2) to[out=90,in=270] (0,3);
	\draw[usualbs] (1,2) to[out=90,in=270] (1,3);
	\draw[usualbs] (0,1) to[out=90,in=270] (1,2);
	\draw[usualbs,crossline] (1,1) to[out=90,in=270] (0,2);
	\draw[usualbs] (.5,0) node[below]{$k$} to (.5,.5) to[out=180,in=270] (0,1) node[left,xshift=2pt]{$1$};
	\draw[usualbs] (.5,.5) to[out=0,in=270] (1,1);
\end{tikzpicture}
\ouriso[\simeq]
\tfrac{\qpar^{k}-\qpar^{\tm k}}{\qpar-\qpar^{\tm 1}}
\apar^{k}\qpar^{\tm 2k^{2}\tm k}
\begin{tikzpicture}[anchorbase,scale=.5,tinynodes]
	\draw[usualbs] (0,0) node[below]{$k$} to (0,4) node[above,yshift=-2pt]{$k$};
\end{tikzpicture}
\end{aligned}
\end{gather}
and the result follows for the negative crossing using the Krull--Schmidt property of 
the derived category, 
see \eg \cite[Lemma 4.20]{Wu-colored-homology}. 
The case of the positive crossing follows from an analogous computation.
\end{proof}

Together with \eqref{eq:tracelocality}, \fullref{lemma:skein-II} proves stabilization 
invariance of $\cHH_{\hbody}(\braid)$ (up to grading shift), 
and consequently completes the proof of \fullref{theorem:homfly}.

Hence, given a balanced, coloring $(\braid,\mtuple)$ of a handlebody braid,
we define
\begin{gather}
w_{(\braid,\mtuple)}:=
\sum_{k=1}^\infty \,
k\cdot\bigg(
\#\Big(\!
\begin{tikzpicture}[anchorbase,scale=.5,tinynodes]
	\draw[usualbs,crossline] (1,0) node[right,xshift=-2pt,yshift=2pt]{$k$} to[out=90,in=270] (0,1.5);
	\draw[usualbs,crossline] (0,0) node[left,xshift=2pt,yshift=2pt]{$k$} to[out=90,in=270] (1,1.5);
\end{tikzpicture}\!
\Big)
-
\#\Big(
\begin{tikzpicture}[anchorbase,scale=.5,tinynodes]
	\draw[usualbs,crossline] (0,0) node[left,xshift=2pt,yshift=2pt]{$k$} to[out=90,in=270] (1,1.5);
	\draw[usualbs,crossline] (1,0) node[right,xshift=-2pt,yshift=2pt]{$k$} to[out=90,in=270] (0,1.5);
\end{tikzpicture}
\Big)
\bigg),
\end{gather}
\ie it is a weighted sum of the difference between the number of purely $k$-colored positive and negative crossings.
Similarly, define
\begin{gather}
W_{(\braid,\mtuple)}:=
\sum_{k=1}^\infty \,
k^{2}
\cdot\bigg(
\#\Big(\!
\begin{tikzpicture}[anchorbase,scale=.5,tinynodes]
	\draw[usualbs,crossline] (1,0) node[right,xshift=-2pt,yshift=2pt]{$k$} to[out=90,in=270] (0,1.5);
	\draw[usualbs,crossline] (0,0) node[left,xshift=2pt,yshift=2pt]{$k$} to[out=90,in=270] (1,1.5);
\end{tikzpicture}\!
\Big)
-
\#\Big(
\begin{tikzpicture}[anchorbase,scale=.5,tinynodes]
	\draw[usualbs,crossline] (0,0) node[left,xshift=2pt,yshift=2pt]{$k$} to[out=90,in=270] (1,1.5);
	\draw[usualbs,crossline] (1,0) node[right,xshift=-2pt,yshift=2pt]{$k$} to[out=90,in=270] (0,1.5);
\end{tikzpicture}
\Big)
\bigg).
\end{gather}

%Passing to half-integral values of the $\apar\tpar$-gradings, 
%and recalling that $\mtuple=(M,\dots,M,l_{1},\dots,l_{n})$, we set

Passing to half-integral values of the $\apar\tpar$-gradings,
we set
\begin{gather}\label{eq:shifts}
\xpar(\braid,\mtuple) 
:=\apar^{\frac{1}{2}(w_{(\braid,\mtuple)}-\sum_{i=1}^{n}l_{i})} 
\tpar^{\frac{1}{2}(-w_{(\braid,\mtuple)}-\sum_{i=1}^{n}l_{i})} 
\qpar^{-W_{(\braid,\mtuple)}+\sum_{i=1}^{n}l_{i}^{2}+l_{i}} 
\end{gather}
and define
\begin{gather}\label{eq:colored-homfly}
\cHHH{\braid,\mtuple}{\hbody}:=
H^{\bullet}(\xpar(\braid,\mtuple)
\cHH_{\hbody}(\braid)),
\end{gather}
where $H^{\bullet}(\placeholder)$ 
denotes taking homology. 

\begin{corollary}\label{corollary:colored-homfly}
For balanced, colored $(\braid,\mtuple)\in\braidg[g,n]$, the triply-graded vector space
$\cHHH{\braid,\mtuple}{\hbody}\in\vecatq$ is an invariant 
of the handlebody link $\bclosure\subset\hbody$.
In general, $\cHHH{\braid,\mtuple}{\hbody}$ is not an 
invariant of the link corresponding to the closure in $\thesphere$ of the 
non-core strands in $\braid$.
\end{corollary}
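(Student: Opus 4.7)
The plan is to combine \fullref{theorem:markov} with \fullref{theorem:homfly}: by \fullref{theorem:markov}, it suffices to verify that \eqref{eq:colored-homfly} is well-defined on the quotient $\braidq[g,\infty]$, \ie invariant under the conjugation \eqref{eq:conjugation} and stabilization \eqref{eq:stabilization} moves. Since \fullref{theorem:homfly} already establishes that $\cHH_{\hbody}(\braid)$ is invariant under these moves up to homotopy equivalence and explicit grading shifts, my task reduces to checking that the normalization factor $\xpar(\braid,\mtuple)$ in \eqref{eq:shifts} precisely compensates for these shifts.

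For the conjugation move $\braid\mapsto\topstuff{s}\braid\topstuff{s}^{-1}$ with $\topstuff{s}\in\langle\bgen[1],\ldots,\bgen[n{-}1]\rangle$, I would argue topologically that the crossings contributed by $\topstuff{s}$ and $\topstuff{s}^{-1}$ pair off: each crossing in $\topstuff{s}$ between two colored strands is mirrored by a crossing of opposite sign in $\topstuff{s}^{-1}$ between the same pair of colored strands (since colors are transported along the colored strands, which are topological paths). Consequently $w_{(\braid,\mtuple)}$ and $W_{(\braid,\mtuple)}$ are unchanged, so $\xpar$ is preserved, and invariance follows immediately from \fullref{theorem:homfly}.

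For stabilization, I would consider the passage from $\braid[c]\braid\in\braidg[g,n]$ to $(\braid[c]\onebraid)\bgen[n]^{\pm 1}(\braid\onebraid)\in\braidg[g,n{+}1]$, where balancedness forces the new strand to take color $l_{n+1}=l_n=:l$. By \fullref{lemma:partial-trace} and the locality displayed in \eqref{eq:tracelocality}, the effect on $\cHH_{\hbody}$ is local and governed by the $k=l$ case of \fullref{lemma:skein-II}, producing a shift of $\tpar^{l}\qpar^{-l}$ in the positive case and $\apar^{l}\qpar^{-2l^{2}-l}$ in the negative case. A direct check against \eqref{eq:shifts}, using that stabilization changes $w$ by $\pm l$, $W$ by $\pm l^{2}$, $\sum_{i} l_{i}$ by $l$, and $\sum_{i} l_{i}^{2}$ by $l^{2}$, shows that $\xpar$ changes by exactly $\tpar^{-l}\qpar^{l}$ (resp.\ $\apar^{-l}\qpar^{2l^{2}+l}$), so the shifts cancel.

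For the last assertion, I would invoke \fullref{proposition:links-get-stuck}: the braids $\braid=\tgen[2]\tgen[1]$ and $\braid^{\prime}=\tgen[1]\tgen[2]$ in $\braidg[2,1]$ are classically conjugate in $\braidg[3]$, so the closures in $\thesphere$ of their non-core strands agree as links, yet $\cHH_{\hbody}(\braid)\not\ouriso[\simeq]\cHH_{\hbody}(\braid^{\prime})$. The pairing-off argument from the second paragraph, applied to the classical conjugator, again yields $\xpar(\braid,\mtuple)=\xpar(\braid^{\prime},\mtuple)$, so the normalized invariants remain distinct. The main obstacle in this plan is the precise bookkeeping of the half-integer shifts in \eqref{eq:shifts} against the shifts extracted from \fullref{lemma:skein-II} via \fullref{lemma:partial-trace}, and in particular the careful alignment of sign conventions between the skein shifts, the locality of partial Hochschild trace, and the colored writhe quantities $w$ and $W$.
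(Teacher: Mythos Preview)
Your approach is essentially the paper's own: invoke \fullref{theorem:markov} to reduce to Markov invariance, use \fullref{theorem:homfly} for invariance up to shift, and verify that $\xpar$ absorbs the shifts from \fullref{lemma:skein-II}. Your bookkeeping for conjugation and stabilization is correct and matches the paper's (terse) treatment.

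There is one small logical gap in your last paragraph. You invoke only the \emph{statement} of \fullref{proposition:links-get-stuck}, namely $\cHH_{\hbody}(\braid)\not\ouriso[\simeq]\cHH_{\hbody}(\braid^{\prime})$, and then conclude that the normalized invariants $\cHHH{\placeholder}{\hbody}=H^{\bullet}(\xpar\,\cHH_{\hbody}(\placeholder))$ remain distinct. But non--homotopy-equivalent complexes can have isomorphic homology, so this inference does not follow from the statement alone. The paper is careful here: it appeals to the \emph{proof} of \fullref{proposition:links-get-stuck}, which shows that the Euler characteristics (with respect to $\tpar$, valued in $\apar\qpar$-graded dimensions) of $\cHH_{\hbody}(\braid)$ and $\cHH_{\hbody}(\braid^{\prime})$ differ. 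Since Euler characteristic is computed from homology and is unaffected by an overall $\apar\tpar\qpar$-shift only up to a monomial factor (and the explicit difference $(a-a^{-1})^{2}-(q-q^{-1})^{2}$ is not annihilated by any such factor), this rules out $H^{\bullet}(\cHH_{\hbody}(\braid))\cong H^{\bullet}(\cHH_{\hbody}(\braid^{\prime}))$ up to any shift, and in particular for the specific shift $\xpar$. Once you replace your appeal to the statement by this Euler-characteristic observation from the proof, your argument is complete.
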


\begin{proof}
First observe that our normalization factor \eqref{eq:shifts} is invariant under the relations in the handlebody braid group, 
\ie if $\braid,\braid^{\prime}\in\braidg[g,n]$ are colored handlebody braids related by 
\eqref{eq:braid-rels-typeA}, \eqref{eq:braid-rels-typeC}, 
\eqref{eq:braid-rels-special}, then $\xpar(\braid,\mtuple)=\xpar(\braid^{\prime},\mtuple)$. 
Thus, since
$\cHH_{\hbody}(\placeholder)$ is an invariant of handlebody braids, 
the same is true for $\cHHH{\braid,\mtuple}{\placeholder}$.
Conjugation invariance follows from the conjugation invariance of $\cHH_{\hbody}(\braid)$, 
up to homotopy, given in \fullref{theorem:homfly}, together 
with the observation that 
$\xpar(\braid,\mtuple)=\xpar(\topstuff{s}\braid\topstuff{s}^{-1},\topstuff{s}\cdot\mtuple)$ 
(here $\topstuff{s} \cdot \mtuple$ is obtained from $\mtuple$ 
by applying the permutation corresponding to $\topstuff{s}$).

Invariance under stabilization follows from \eqref{eq:tracelocality} 
and \eqref{eq:skein-a}, together with a careful inspection of \eqref{eq:shifts}.

Finally, the second statement follows from (the proof of) \fullref{proposition:links-get-stuck}, 
since this shows that the homology of $\cHH_{\hbody}$ for the braids therein are not isomorphic up to a degree shift. 
\end{proof}
%%%%%%%%%%%%%%%%%%%%%%%%%%%%%%%%%%%%%%%%
%%%                                  %%%
%%%            Bibliography          %%%
%%%                                  %%%
%%%%%%%%%%%%%%%%%%%%%%%%%%%%%%%%%%%%%%%%

%%%%%%%%%%%%%%%%%%% end of paper %%%%%%%%%%%%%%%%%%%%%%%%%

\end{document}